\newtheorem{theorem}{Theorem}%[section]
\newtheorem{example}{Example}%[section]
\newtheorem{remark}[theorem]{Remark}%[section]
\newtheorem{lemma}[theorem]{Lemma}%[section]
\newtheorem{proposition}[theorem]{Proposition}%[section]
\newtheorem{corollary}[theorem]{Corollary}%[section]
\numberwithin{equation}{section}
\def\Vir{\mathcal{V}}
\def\Der{\operatorname{Der}}
\def\span{\operatorname{span}}
\newcommand{\C}{\ensuremath{\mathbb C}\xspace}
\renewcommand{\a}{\ensuremath{\alpha}}
\renewcommand{\b}{\ensuremath{\beta}}
\newcommand{\LL}{{\widetilde{\mathcal{V}}}}
\newcommand{\ord}{{\rm ord}}
\newcommand{\Aut}{{\rm Aut}}
\renewcommand{\H}{\mathfrak{H}}
\newcommand{\ma}{\ensuremath{\mathfrak{a}}}
\renewcommand{\aa}{{\widetilde{\mathfrak{a}}}}
\newcommand{\Z}{\ensuremath{\mathbb{Z}}\xspace}
\newcommand{\N}{\ensuremath{\mathbb{N}}\xspace}
\newcommand{\V}{\ensuremath{\mathcal{V}}\xspace}
\newcommand{\Ind}{\ensuremath{\operatorname{Ind}}\xspace}
\newcommand{\Soc}{\ensuremath{\operatorname{Soc}}\xspace}
\newcommand{\ad}{\operatorname{ad}\xspace}
\newcommand{\ann}{\operatorname{ann}\xspace}
\newcommand{\WW}{\widetilde{\mathcal{W}}}
\newcommand{\W}{\mathcal{W}}
\renewcommand{\phi}{\varphi}
\begin{document}
\title[Generalized oscillator representations]{Generalized oscillator representations of  the twisted Heisenberg-Virasoro algebra}
\author{Rencai L{\"u} and Kaiming Zhao}
\date{Aug.28, 2013}
\maketitle

\begin{abstract} In this paper, we first obtain a general result on
sufficient conditions for tensor product modules  to be simple over
an arbitrary Lie algebra. We  classify simple modules with a nice
property  over the infinite-dimensional Heisenberg algebra ${\H}$,
and then obtain a lot of simple modules over the twisted
Heisenberg-Virasoro algebra $\LL$ from generalized oscillator
representations of $\LL$  by extending these $\H$-modules. We give the necessary and sufficient conditions for
Whittaker modules over $\LL$ (in the more general setting) to be simple. We use
the ``shifting technique" to determine the necessary and sufficient
conditions for the tensor products of highest weight modules and
modules of intermediate series over $\LL$ to be simple.  At last we
 establish the ``embedding trick" to obtain a lot more  simple $\LL$-modules.
\end{abstract}

\vskip 10pt \noindent {\em Keywords:} Heisenberg algebra, Virasoro
algebra, Whittaker module, simple module

\vskip 5pt \noindent {\em 2000  Math. Subj. Class.:} 17B10, 17B20,
17B65, 17B66, 17B68

\vskip 10pt

\section{Introduction}
We denote by $\mathbb{Z}$, $\mathbb{Z}_+$, $\N$, and $\mathbb{C}$
the sets of  all integers, nonnegative integers, positive integers,
 and complex numbers, respectively. For a Lie algebra
$L$ we denote by $U(L)$ the universal enveloping algebra of $L$.

  The twisted Heisenberg-Virasoro algebra $\LL$ is
the universal central extension of the Lie algebra
$\{f(t)\frac{d}{dt}+g(t)| f,g \in \C[t,t^{-1}]\}$ of differential
operators of order at most one on the Laurent polynomial algebra
$\C[t,t^{-1}]$, see \cite{ACKP}. More precisely, the twisted
Heisenbeg-Virasoro algebra ${\LL}$ is a Lie algebra over $\C$ with
the basis
$$\{t^{n+1}\frac{d}{d t},t^n ,z_1, z_2,z_3 | n \in \Z\}$$
and subject to the Lie brackets given by
\begin{equation}[t^{n+1}\frac{d}{d t},t^{m+1}\frac{d}{d t}]=(m-n)t^{m+n+1}\frac{d}{d t}+\delta_{n,-m}\frac{n^3-n}{12}z_1,\end{equation}
\begin{equation}[t^{n+1}\frac{d}{d t},t^m]=mt^{m+n}+\delta_{n,-m}(n^2+n)z_2, \end{equation}
\begin{equation}[t^n,t^m]=n\delta_{n,-m}z_3,\end{equation}
\begin{equation}[\LL,z_1]=[\LL,z_2]=[\LL,z_3]=0. \end{equation}
The center of $\LL$ is spanned by $z_1, z_2, z_3, $ and $t^0$.  We
define $d_n=t^{n+1}\frac{d}{d t }$, $I_i=t^i$, and both symbols will
be used according to contexts. The Lie algebra $\LL$ has a natural
$\Z$-gradation with respect to $\ad(d_0)$:
\begin{equation}\LL_n=\C d_n+\C I_n,\forall n\in \Z\setminus\{0\},\end{equation}
\begin{equation}\LL_0=\C d_0+\C I_0+\C z_1+\C z_2+\C
z_3.\end{equation}

The Lie algebra $\LL$ has a Witt subalgebra
$\mathcal{W}=\C[t]\frac{d}{d t}$, a Virasoro subalgebra $\Vir$ with
basis $\{d_i,z_1|i\in \Z\}$, and a Heisenberg subalgebra $\H$ with
basis $\{I_i, z_3|i\in \Z\}$.

\
The twisted Heisenberg-Virasoro algebra $\LL$ has been studied by  Arbarello, De Concini,  Kac, and Procesi
in \cite{ACKP}, where a connection is established between the second cohomology of
certain moduli spaces of curves and the second cohomology of the Lie algebra of
differential operators of order at most one. They also proved that when
the central element of the Heisenberg subalgebra acts in a non-zero way, a simple
highest weight module for $\LL$ is isomorphic to the tensor product of a simple
module for the Virasoro algebra and a simple module for the infinite-dimensional
Heisenberg algebra. For a more general result, see Theorem 12.

To introduce our results in this paper, we will first recall and define some concepts.

For $\lambda\in\C, s,r\in \Z_+$, we define  the following subalgebras and quotient algebras of $\LL$:
\begin{equation}\W=\Der(\C[t])=\span \{d_i|i\ge
-1\},\,\,\,\,\widetilde{\W}=\C[t]\frac{d}{d
t}+\C[t],\end{equation}
\begin{equation}\mathfrak{a}=\span \{d_i |i\ge 0\},\,\,\,\,\widetilde{\mathfrak{a}}=\span \{d_i,I_i |i\ge 0\},\end{equation}

\begin{equation}\V^{(r)}=\span \{d_i|i\ge r\},\,\,\LL^{(r,s)}=\span \{t^{r+i},d_{s+i}|i\ge 0\},\end{equation}
\begin{equation}\mathfrak{a}_r=\mathfrak{a}/\V^{(r+1)},\,\,\,\aa_{r,s}=\aa/\LL^{(r,s)},\end{equation}
\begin{equation}\Vir [\lambda]=\C[t,t^{-1}](t-\lambda)\frac{d}{d t}+\C z,\end{equation}
\begin{equation}\LL[\lambda]=\span\{t^i,t^i(t-\lambda)\frac{d}{d t}, z_1,z_2,z_3|i\in \Z\}.\end{equation}

For any $\Z$-graded  Lie algebra $L=\oplus L_i$ (which can be any of the above algebras), denote by
$\mathcal{O}_{L}$ the category of all $L$-modules $V$ satisfying

 {\bf Condition A:} For any
$v\in V$, there exists a positive integer $n$ (depending on $v$)
such that $L_{i} v=0$ for all $i\ge n$.

Let $V$ be a module over a Lie algebra $L$. We say that   $V$ is
trivial if $L V=0$. Denote by $\Soc_{L}(V)$ the socle of the
$L$-module $V$, i.e., $\Soc_{L}(V)$ is the sum of the minimal
nonzero submodules of $V$.

Recall that a module $V$ over a Lie algebra $L$ is called {\em
locally finite} provided that any $v\in V$ belongs to a finite
dimensional $L$-submodule. The module is called {\em locally
nilpotent} provided that for any $v\in V$ there exists an
$n\in\mathbb{N}$ such that $a_1a_2\cdots a_n(v)=0$ for all
$a_1,a_2,\dots,a_n\in L$.

When $V$ is a simple module over $L$, where $L$ is one of
$\LL,\V,\WW,\W$, the following lemma gives some equivalent
conditions for Condition A.

\begin{lemma}\label{eq-condition} Let $L$ be one of $\LL,\V,\WW,\W$, and $V$ be a simple $L$-module. Denote $L^{(k)}=\sum_{i\ge k}L_i$.
Then the following conditions are equivalent:
\begin{enumerate}[$($a$)$]
\item\label{1.1} $V\in \mathcal{O}_{L}$.
\item\label{1.2} There exists some $0\ne v\in V$ and $s\in \N$ such
that $L^{(s)} v=0$.
\item\label{1.3} There exists $k\in \mathbb{N}$ such that $V$ is a locally finite
$L^{(k)}$-module.
\item\label{1.4} There exists $m\in \mathbb{N}$ such that $V$ is a locally nilpotent
$L^{(m)}$-module.
\end{enumerate}
\end{lemma}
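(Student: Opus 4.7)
The plan is to establish the equivalences through $(a) \Leftrightarrow (b)$, $(c) \Rightarrow (b)$, $(d) \Rightarrow (b)$, and $(b) \Rightarrow (c), (d)$. Simplicity of $V$ enters crucially in $(b) \Rightarrow (a)$ and in upgrading a single annihilated vector to the global properties (c) and (d).

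The direction $(a) \Rightarrow (b)$ is immediate by applying Condition~A to any nonzero $v \in V$. For $(b) \Rightarrow (a)$, I define $V' = \{w \in V : L^{(n)} w = 0 \text{ for some } n \in \N\}$, which is nonzero by hypothesis. Using the identity $L_i(yw) = [L_i, y] w + y L_i w$ together with $[L_m, L_i] \subset L_{m+i}$, if $L^{(N)} w = 0$ and $y \in L_m$, then $L_i(yw) = 0$ for all $i \geq \max(N, N - m)$, so $V'$ is stable under each $L_m$. Simplicity then forces $V' = V$, which is (a).

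The implications $(d) \Rightarrow (b)$ and $(c) \Rightarrow (b)$ are relatively direct. For $(d) \Rightarrow (b)$: pick $0 \neq v_0 \in V$, let $r \geq 1$ be minimal with $(L^{(m)})^r v_0 = 0$, and choose any $0 \neq v \in (L^{(m)})^{r-1} v_0$; then $L^{(m)} v = 0$. For $(c) \Rightarrow (b)$: given $0 \neq v$, the space $W := U(L^{(k)}) v$ is finite-dimensional, so the image of $L^{(k)} \to \operatorname{End}(W)$ is a finite-dimensional Lie algebra. Using the recursion $d_{n+1} = \tfrac{1}{n - k}[d_k, d_n]$ and a Cayley--Hamilton argument applied to $\ad(d_k)$ on this finite-dimensional image (plus the analogous argument for $I_n$ in $\LL$ and $\WW$ via $[d_i, I_j] = j I_{i+j}$), one forces the images $\bar{d}_n$ (and $\bar{I}_n$) to vanish for all large $n$; hence the kernel of the representation contains $L^{(n)}$ for some $n$, giving $L^{(n)} v = 0$.

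The main content lies in $(b) \Rightarrow (c)$ and $(b) \Rightarrow (d)$. Having already deduced (a) from (b), every $w \in V$ is annihilated by some $L^{(N_w)}$. Use simplicity to write $w = u v$ with $u \in U(L)$ an ordered PBW monomial in graded basis elements, with negative-indexed factors on the left and nonnegative-indexed factors on the right. The nonnegative-indexed factors contribute only finitely many relevant terms since $L^{(s)} v = 0$, and commuting an element of $L^{(s)}$ past each negative-indexed factor via $[L_i, L_{-j}] \subset L_{i-j}$ produces a finite collection of monomials of strictly smaller ``negative-degree deficit.'' An induction on this deficit shows that $U(L^{(s)}) w$ is finite-dimensional, yielding (c) with $k = s$, and that $L^{(s)}$ acts nilpotently on $w$, yielding (d) with $m = s$. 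The main obstacle is the combinatorial bookkeeping at this step: one must track the commutator expansions and verify termination, which depends on the finite-dimensionality of each $L_i$ and the graded product $[L_i, L_j] \subset L_{i+j}$ of $\LL, \V, \WW, \W$.
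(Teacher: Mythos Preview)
Your overall strategy matches the paper's: both establish $(a)\Leftrightarrow(b)$, then $(b)\Rightarrow(c),(d)$ via PBW bookkeeping, and $(c),(d)\Rightarrow(b)$ separately. Your argument for $(d)\Rightarrow(b)$ (take $v$ in $(L^{(m)})^{r-1}v_0$ for the minimal $r$) is in fact cleaner than the paper's, which instead shows that $L^{(s)}\subset\operatorname{span}\{a_1\cdots a_n\mid a_i\in L^{(m)}\}$ inside $U(L^{(m)})$ for suitable $s$.

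There is, however, a gap in your $(c)\Rightarrow(b)$. First, the recursion is misstated: $[d_k,d_n]=(n-k)d_{n+k}$, not $(n-k)d_{n+1}$, so $\operatorname{ad}(\bar d_k)$ shifts indices by $k$, not by $1$. More importantly, Cayley--Hamilton for $\operatorname{ad}(\bar d_k)$ on the finite-dimensional image only produces a linear relation among $\bar d_{k+r},\bar d_{2k+r},\bar d_{3k+r},\dots$ for each residue $r$; it does not by itself force any individual $\bar d_n$ to vanish. The paper handles this step by invoking \cite{MZ}: from a nontrivial dependence $\sum_{i\in S} c_i\bar d_i=0$ one applies $\operatorname{ad}(\bar d_p)$ with $p=\max S$, obtaining a new dependence whose support has strictly smaller spread $\max-\min$; iterating yields $c\,\bar d_{k_1}=0$ with $c\neq 0$ and $k_1>k$. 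Once $\bar d_{k_1}=0$, the identity $(j-k_1)\bar d_{j+k_1}=[\bar d_{k_1},\bar d_j]=0$ (and its analogue with $I_j$) gives $L^{(n)}\subset\operatorname{ann}(W)$ for some $n$, hence $L^{(n)}v=0$. Your direct conclusion at this point is fine; the paper's subsequent passage through solvability and a common eigenvector is not actually needed once $L^{(n)}\subset\operatorname{ann}(W)$ is known.
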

\begin{proof} $(a)\Rightarrow (b)$ is trivial. And using PBW Theorem, we can
easily deduce $(b)\Rightarrow (a),(c),(d)$.\par
 $(c)\Rightarrow (b).$ Let $0\ne v_0\in V$. Then $W=U(L^{(k)})v_0$ is a
 finite dimensional $L^{(k)}$ module. Hence
 $L^{(k)}/\ann_{L^{(k)}}(W)$ is finite dimensional. By similar arguments as in Section
 3.3 in \cite{MZ}, we obtain $d_{k_1}\in \ann_{L^{(k)}}(W)$ for some $k_1>k$, hence $L^{(n)}\subset \ann_{L^{(k)}}(W)$ for
 some $n>k+k_1$. Thus $L^{(k)}/\ann_{L^{(k)}}(W)$ is solvable.
 Hence $L^{(k)}$ has a common eigenvector $v\in W$. However there exists some $s\in \N$ such that $L^{(s)}\subset [L^{(k)},L^{(k)}]$,
 which implies $L^{(s)}v=0$.

 $(d)\Rightarrow (b)$. Let $0\ne v\in V$. There exists some
 $n\in \N$ with $a_1a_2\cdots a_n v=0$ for all
$a_1,a_2,\dots,a_n\in L^{(m)}$. It is straightforward to verify that
there exists some $s\in \N$ such that $$L^{(s)}\subset
\span\{a_1a_2\cdots a_n|a_i\in L^{(m)}\}\subset U(L^{(m)}),$$ which
completes the proof.
\end{proof}

The simple modules in $\mathcal{O}_{\W}$ are studied in \cite{LZ3}.

\begin{lemma}\label{char-O(w)}\cite{LZ3} (a). Suppose that $A\in \mathcal{O}_{\mathfrak{a}}$ is
simple and nontrivial. Then there exists some $r\in \Z_+$ such that
$d_r A=0, \forall i>r$ and $d_r$ acts bijectively on $A$.
Consequently, $A$ is a simple $\mathfrak{a}_r$-module for some
$r\in\N$.

(b). Let $A\in \mathcal{O}_{\mathfrak{a}}$,
$W,W_1\in\mathcal{O}_{\W}$ be all nontrivial simple modules.
\begin{enumerate} \item The $\W$-module $\Ind_{\mathfrak{a}}^{\W}(A)$ is
simple   in $\mathcal{O}_{\W}$;
\item The $\mathfrak{a}$-module $\Soc_{\mathfrak{a}}(W)\in\mathcal{O}_{\mathfrak{a}}$ is  simple, and
an essential $\mathfrak{a}$-submodule of $W$, i.e. the intersection
of all nonzero $\mathfrak{a}$-submodules of $V$;
% i.e. it equals the intersection of
%all nonzero $\widetilde{\mathfrak{a}}$-submodule of $W$;
\item We have $W\cong\Ind_{\mathfrak{a}}^{\W}\Soc_{\mathfrak{a}}(W)$ and $
A= \Soc_{\mathfrak{a}}(\Ind_{\mathfrak{a}}^{\W}A)$;
\item We have $W\cong W_1$ if and only if $\Soc_{\mathfrak{a}}(W)\cong
\Soc_{\mathfrak{a}}(W_1)$.\end{enumerate} Consequently, $W$ is the
induced module from a simple $\mathfrak{a}_r$-module for some
$r\in\N$.
 \end{lemma}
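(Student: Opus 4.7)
The plan is to prove (a) first using the commutation relations in $\mathfrak{a}$ together with a minimality argument, then to bootstrap the same idea to (b).

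\emph{Part (a).} Condition A gives a nonzero $v\in A$ with $\mathfrak{a}^{(n)}v=0$ for some $n$. The subspace $M_n=\{w\in A:\mathfrak{a}^{(n)}w=0\}$ is $\mathfrak{a}$-stable since $[\mathfrak{a}^{(n)},\mathfrak{a}]\subset\mathfrak{a}^{(n)}$, so simplicity of $A$ forces $M_n=A$, i.e., $\mathfrak{a}^{(n)}A=0$. Let $r\geq 0$ be minimal with $d_iA=0$ for all $i>r$; then $d_rA\neq 0$, since otherwise $\mathfrak{a}A=0$ contradicting nontriviality. Using $[d_i,d_r]=(r-i)d_{i+r}$, for $i\geq 1$ the bracket lies in $\mathfrak{a}^{(r+1)}$ and annihilates $A$, while for $i=0$ it is a scalar multiple of $d_r$ itself. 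Hence both $\ker(d_r)$ and $d_rA$ are $\mathfrak{a}$-submodules of $A$, and simplicity forces $\ker(d_r)=0$ and $d_rA=A$.

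\emph{Part (b)(1).} By PBW, $\Ind_{\mathfrak{a}}^{\W}(A)\cong\C[d_{-1}]\otimes A$ as a vector space. Given $0\neq v=\sum_{k=0}^{N}d_{-1}^k\otimes a_k$ with $a_N\neq 0$, iteratively apply $d_j$ for suitably large $j$: since $d_iA=0$ for $i>r$ by (a), commuting $d_j$ past $d_{-1}^k$ via $[d_j,d_{-1}]=-(j+1)d_{j-1}$ and discarding terms that annihilate $A$ strictly lowers the $d_{-1}$-degree, while the bijectivity of $d_r$ on $A$ guarantees a nonzero leading coefficient at each step. After at most $N$ applications one lands in $1\otimes A$, and simplicity of $A$ as an $\mathfrak{a}$-module yields $1\otimes A\subset U(\W)v$; acting by $d_{-1}$ then generates all of $\Ind_{\mathfrak{a}}^{\W}(A)$. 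Membership in $\mathcal{O}_{\W}$ is immediate from $d_iA=0$ for $i>r$ and the PBW presentation.

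\emph{Parts (b)(2)--(4).} For the existence part of (2), form the filtration $M_n=\{w\in W:\mathfrak{a}^{(n)}w=0\}$ by $\mathfrak{a}$-submodules, whose union equals $W$ by Condition A; choosing the smallest $n$ with $M_n\neq 0$ and running the argument of (a) inside $M_n$ produces a simple $\mathfrak{a}$-submodule $S\subset W$ on which some $d_{r'}$ acts bijectively. For (3), the canonical $\W$-map $\Ind_{\mathfrak{a}}^{\W}S\to W$ is nonzero, hence surjective by simplicity of $W$ and injective by (b)(1); the identification $A=\Soc_{\mathfrak{a}}(\Ind_{\mathfrak{a}}^{\W}A)$ and the essentiality statement $\Soc_{\mathfrak{a}}(W)=S$ follow because the degree-reduction of (b)(1), carried out within $U(\mathfrak{a})$ alone, forces any simple $\mathfrak{a}$-submodule of $\Ind_{\mathfrak{a}}^{\W}S$ to lie in $1\otimes S$ and hence to coincide with it. Item (4) is then a formal consequence of (3). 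The main obstacle is the combinatorial degree-reduction underlying (b)(1) and its refinement within $U(\mathfrak{a})$: tracking the leading coefficients when pushing operators through powers of $d_{-1}$, so that the surviving term is a nonzero iterate of $d_r$ on the top-degree component, is the only computationally delicate point, after which bijectivity from (a) closes everything.
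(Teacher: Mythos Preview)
The paper does not give its own proof of this lemma; it is quoted from \cite{LZ3}. Your arguments for (a) and for (b)(1) are correct and along the expected lines (in (b)(1) the clean choice is $j=r+1$, so that one commutation produces the bijective operator $d_r$ on the top coefficient). There is, however, a genuine gap in your treatment of (b)(2). You assert that ``running the argument of (a) inside $M_n$ produces a simple $\mathfrak{a}$-submodule $S$'', but the argument of (a) \emph{presupposes} simplicity and uses it to deduce bijectivity of $d_r$; it does not manufacture a simple submodule out of a module not already known to be simple. From the minimality of $n$ you only obtain that $d_{n-1}$ is injective on $M_n$, and an injective operator alone does not force the existence of a simple submodule (for the one-dimensional Lie algebra acting on $\C[x]$ by multiplication by $x$, there are no simple submodules at all).

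The repair is to reverse the logical order, exactly as the paper does when proving the $\WW$-analogue in Lemma~\ref{char-O(ww)}. Pick any $0\ne w\in M_n$ and set $M=U(\mathfrak{a})w\subset M_n$. Your degree-reduction computation from (b)(1), which only needs \emph{injectivity} of $d_{n-1}$ on $M$, shows that the canonical surjection $\Ind_{\mathfrak{a}}^{\W}M\twoheadrightarrow W$ has trivial kernel: a nonzero kernel element of minimal $d_{-1}$-degree $N>0$ is sent by $d_n$ to a nonzero kernel element of degree $N-1$, while a degree-zero kernel element would lie in $1\otimes M$, contradicting that $M\hookrightarrow W$. Hence $W\cong\Ind_{\mathfrak{a}}^{\W}M$, and now simplicity of $W$ forces $M$ to be simple, since a proper nonzero $\mathfrak{a}$-submodule $M'\subsetneq M$ would give the proper nonzero $\W$-submodule $\Ind_{\mathfrak{a}}^{\W}M'\subsetneq W$. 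With $S=M$ thus obtained, your remaining arguments for essentiality and for (b)(3)--(4) go through unchanged.
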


Let us recall some results  for Whittaker modules over $\Vir$
studied in \cite{OW} and \cite{LGZ}.

For any nonnegative integer $m\in \Z_+$,  let $\psi_m: \V^{(m)}+\C
z_1\rightarrow \C$ be a Lie algebra homomorphism. Then we have the
one dimensional module $\C_{\psi_m}=\C w_{\psi_m}$ over $\V^{(m)}+\C
z_1$  with $x\cdot w_{\psi_m}=\psi_m(x)w_{\psi_m},\forall x\in
\V^{(m)}+\C z_1$. The induced $\V$-module
\begin{equation}\label{whittaker-1}W_{\psi_m}=\Ind_{\V^{(m)}+\C z_1}^{\V} \C_{\psi_m}\end{equation} is
called the universal Whittaker module with respect to $\psi_m$.

\begin{lemma}\cite{LGZ} For any $m\ge 1$, $W_{\psi_m}$ is simple if and only if $(\psi(d_{2m}),\psi(d_{2m-1}))\ne (0,0)$.\end{lemma}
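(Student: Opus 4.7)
The lemma is an equivalence, so I would prove each direction separately. For the ``only if'' direction, assume $\psi(d_{2m})=\psi(d_{2m-1})=0$. The plan is to construct a proper nonzero submodule by extending $\psi_m$ to a character on a slightly larger subalgebra. Define $\psi'_{m-1}\colon\V^{(m-1)}+\C z_1\to\C$ by $\psi'_{m-1}|_{\V^{(m)}+\C z_1}=\psi_m$ and $\psi'_{m-1}(d_{m-1})=0$. Well-definedness reduces to checking that $\psi'_{m-1}$ vanishes on each bracket $[d_i,d_j]=(j-i)d_{i+j}$ with $i,j\ge m-1$; the only nontrivial requirements are $\psi'_{m-1}(d_{2m-1})=\psi'_{m-1}(d_{2m})=0$, which hold by hypothesis, while $\psi_m(d_n)=0$ already for $n>2m$. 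The universal property of Whittaker modules then yields a $U(\V)$-surjection $W_{\psi_m}\twoheadrightarrow W_{\psi'_{m-1}}$ sending $w_{\psi_m}\mapsto w_{\psi'_{m-1}}$, and the element $d_{m-1}w_{\psi_m}$ lies in its kernel (mapping to $\psi'_{m-1}(d_{m-1})w_{\psi'_{m-1}}=0$) but is nonzero in $W_{\psi_m}$ by PBW. Hence $W_{\psi_m}$ is not simple.

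For the ``if'' direction, assume $(\psi(d_{2m}),\psi(d_{2m-1}))\ne(0,0)$. The plan is to show that any nonzero $v\in W_{\psi_m}$ satisfies $U(\V)v=W_{\psi_m}$. By PBW, identify $W_{\psi_m}\cong U(\V^-)w_{\psi_m}$ with $\V^-=\span\{d_i:i<m\}$, and define the length filtration $F_r\subset W_{\psi_m}$ consisting of vectors expressible using at most $r$ factors from $\V^-$, so that $F_r/F_{r-1}\cong S^r(\V^-)$. Induct on the length $r$ of $v\in F_r\setminus F_{r-1}$, the base $r=0$ being immediate. For $r\ge 1$, let $v_r^\circ\in S^r(\V^-)\setminus\{0\}$ be the top symbol and $i_*=\min\{i<m:\partial v_r^\circ/\partial d_i\ne 0\}$, and apply $T_k:=d_k-\psi(d_k)\cdot 1\in U(\V^{(m)})$ with $k=2m-i_*$ when $\psi(d_{2m})\ne 0$ and $k=2m-1-i_*$ otherwise (in both cases $k\ge m$). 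A direct commutator computation shows that $T_k$ sends $F_r$ into $F_{r-1}$ (because the length-preserving part of its action on $v_r^\circ$ involves only $\partial/\partial d_i$ for $i<i_*$, which annihilate $v_r^\circ$), and the image of $v_r^\circ$ in $F_{r-1}/F_{r-2}$ is an explicit nonzero scalar multiple of $\partial v_r^\circ/\partial d_{i_*}$. Whenever $T_kv\ne 0$, the induction hypothesis applied to $T_kv$ produces $w_{\psi_m}\in U(\V)T_kv\subset U(\V)v$.

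The main technical obstacle is interference from the sub-leading part $v_{<r}$: the full image of $T_kv$ in $F_{r-1}/F_{r-2}$ also receives a cross contribution from the top symbol of $v_{<r}$, which may cancel the principal term and, in the extreme case, could force $T_kv=0$ inside $W_{\psi_m}$. My plan to overcome this is that, if $T_kv=0$ for the chosen $k$, one varies $k$ over all admissible values $\{2m-i,\,2m-1-i:i<m\}$ and either finds one with $T_kv\ne 0$ (reducing to the previous step) or concludes $T_kv=0$ for every $k\ge m$---in which case $v$ is a Whittaker vector for $\V^{(m)}+\C z_1$. A uniqueness-of-Whittaker-vectors argument, proved by essentially the same top-symbol computation above and precisely where the nondegeneracy hypothesis is indispensable, then forces $v\in\C w_{\psi_m}$, contradicting $r\ge 1$.
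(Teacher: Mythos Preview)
The paper does not prove this lemma; it is quoted from \cite{LGZ} without argument, so there is no proof in the paper itself to compare against. That said, here is an assessment of your proposal on its own terms.

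Your ``only if'' direction is correct and clean: extending $\psi_m$ to a character of $\V^{(m-1)}+\C z_1$ and exhibiting the resulting nontrivial quotient is a good way to produce a proper submodule.

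In the ``if'' direction your overall strategy (filtration by length, lower by $T_k=d_k-\psi(d_k)$, reduce to uniqueness of Whittaker vectors) is the standard one, but the handling of the case $T_k v=0$ has a gap. You propose to vary $k$ over all values $\ge m$ and, whenever some $T_{k'}v\ne 0$, to ``reduce to the previous step,'' i.e.\ apply the length-induction hypothesis to $T_{k'}v$. But that step requires $T_{k'}v\in F_{r-1}$, and you established this only for the particular $k=2m-i_*$ (resp.\ $2m-1-i_*$): for smaller $k'\ge m$, which genuinely occur when $i_*<0$, the length-preserving part $\sum_{i<m-k'}(i-k')\,d_{i+k'}\,\partial/\partial d_i$ need not annihilate $v_r^\circ$, so $T_{k'}v$ may remain in $F_r\setminus F_{r-1}$ and the induction hypothesis does not apply. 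The same issue resurfaces in the ``uniqueness of Whittaker vectors'' step you invoke at the end: the interference from $v_{<r}$ can really cancel the principal term in $F_{r-1}/F_{r-2}$, and you have not explained why the \emph{same} top-symbol computation settles uniqueness once that cancellation is allowed.

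The repair is to strengthen the induction. One clean option is to induct on a refined well-ordering of PBW monomials (e.g.\ first by length $r$, then reverse-lexicographically in the exponent sequence read from the smallest index) and check that the specific $T_k$ strictly lowers the leading monomial; this is essentially how the argument is carried out in \cite{LGZ}. Alternatively, one can first prove uniqueness of Whittaker vectors by an independent descent, and then the $T_kv=0$ branch in your main induction simply cannot occur for $v\notin\C w_{\psi_m}$. Either route completes the proof, but as written the inductive structure is incomplete.
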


Let us recall a result on   Whittaker modules over $\H$ from
\cite{C}. Suppose that $\theta:\C[t]+\C z_3 \rightarrow \C$ is a
linear map. Then $\C w_{\theta}$ becomes a one dimensional $\C[t]+\C
z_3$ module defined by $x w_{\theta}=\theta(x)w_{\theta}$ for all
$x\in \C[t]+\C z_3$. The induced $\H$-module
$W_{\theta}=\Ind_{\C[t]+\C z_3}^{\H} \C w_{\theta}$ is called a
Whittaker module with respect to $\theta$.

\begin{lemma} \label{H-whittaker}\cite{C} The $\H$-module $W_{\theta}$ is  simple  if and only if $\theta(z_3)\ne 0$.\end{lemma}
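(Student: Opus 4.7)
\emph{Setup and the easy direction.} The subalgebra $\H^- := \span\{I_{-m} : m \ge 1\}$ complements $\C[t]+\C z_3$ inside $\H$ and is commutative, since the only internal brackets $[I_{-m},I_{-n}] = -m\delta_{m,-n}z_3$ vanish for $m,n\ge 1$. PBW therefore gives a basis of $W_\theta$ consisting of the monomials $v_\lambda := I_{-\lambda_1}\cdots I_{-\lambda_d}w_\theta$ indexed by partitions $\lambda = (\lambda_1\ge\cdots\ge\lambda_d\ge 1)$, with $v_\emptyset := w_\theta$. If $\theta(z_3)=0$, then since $z_3$ is central with $z_3 w_\theta = 0$, it acts as zero on all of $W_\theta$, so every $I_n$ with $n\ge 0$ commutes with $\H^-$ on $W_\theta$ and therefore acts as the scalar $\theta(I_n)$. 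Hence $\H$-submodules of $W_\theta$ coincide with ideals of $\C[I_{-1},I_{-2},\ldots]$, and the ideal generated by $I_{-1}w_\theta$ is a proper nonzero submodule, so $W_\theta$ is reducible.

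\emph{The lowering identity.} Assume now $\theta(z_3)\ne 0$. Commuting $I_N$ past each factor $I_{-\lambda_i}$ via $[I_N,I_{-\lambda_i}]=N\delta_{N,\lambda_i}z_3$ and then letting $I_N$ act on $w_\theta$ gives, for every $N\ge 1$,
\begin{equation*}
(I_N-\theta(I_N))\,v_\lambda \;=\; N\,\theta(z_3)\,\mathrm{mult}_N(\lambda)\,v_{\lambda\setminus N},
\end{equation*}
where $\mathrm{mult}_N(\lambda)$ counts the parts of $\lambda$ equal to $N$ and $\lambda\setminus N$ removes one such part. Iterating, $(I_N-\theta(I_N))^k v_\lambda$ equals $(N\theta(z_3))^k\frac{j!}{(j-k)!}v_{\lambda\setminus N^k}$ when $j:=\mathrm{mult}_N(\lambda)\ge k$, and equals $0$ when $j<k$.

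\emph{Induction and the main obstacle.} Take any nonzero submodule $M\subseteq W_\theta$ and any $0\ne v = \sum c_\lambda v_\lambda \in M$. Let $N$ be the largest part occurring in the support, and let $k$ be the maximum value of $\mathrm{mult}_N(\lambda)$ over $\lambda$ with $c_\lambda\ne 0$. Applying $(I_N-\theta(I_N))^k\in U(\H)$ to $v$ gives
\begin{equation*}
(N\theta(z_3))^k\,k!\sum_{\mathrm{mult}_N(\lambda)=k} c_\lambda\,v_{\lambda\setminus N^k}\in M,
\end{equation*}
whose support consists of partitions with maximum part strictly less than $N$. Descending induction on this maximum part eventually produces a nonzero scalar multiple of $w_\theta$ inside $M$, forcing $M=W_\theta$. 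The only step requiring care is verifying the displayed element is genuinely nonzero: the map $\lambda\mapsto\lambda\setminus N^k$ is injective on partitions with $\mathrm{mult}_N=k$ (one recovers $\lambda$ by adding back $k$ copies of $N$), so the surviving $v_{\lambda\setminus N^k}$ are linearly independent in $W_\theta$, and the scalar $(N\theta(z_3))^k k!$ is nonzero precisely because $\theta(z_3)\ne 0$. This is the one place where the hypothesis $\theta(z_3)\ne 0$ is used essentially.
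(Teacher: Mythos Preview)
Your argument is correct. Note, however, that the paper does not supply its own proof of this lemma: it is quoted from \cite{C} without proof, so there is nothing in the paper to compare your approach against. Your proof is the standard one for Heisenberg Whittaker/Fock modules---use the commutation relations to peel off the largest part of a partition via $(I_N-\theta(I_N))^k$, reducing the maximal part until only $w_\theta$ remains---and the reducible direction via the polynomial-ideal identification when $\theta(z_3)=0$ is likewise the expected argument. Both directions are cleanly executed; in particular, your observation that $\lambda\mapsto\lambda\setminus N^k$ is injective on partitions with exactly $k$ copies of $N$ is the right way to justify that the reduced vector is nonzero.
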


The simple modules in $\mathcal{O}_{\Vir }$ are studied in
\cite{MZ}. From Lemma \ref{eq-condition} and Theorem 2 in \cite{MZ},
we have

\begin{lemma}\cite{MZ}\label{char-O(v)}  Let $V\in\mathcal{O}_{\Vir }$ be simple.
\begin{enumerate} \item The $\mathfrak{a}$-module $\Soc_{\mathfrak{a}}(V)$ is simple, which is an essential $\mathfrak{a}$-submodule of
$V$, i.e. the intersection of all nonzero $\mathfrak{a}$-submodule
of $V$;
\item If $V$ is not a highest weight module, then
$V\cong\Ind_{\mathfrak{a}+\C z}^{\V}\Soc_{\mathfrak{a}}(V)$, where
the action of $z$ is a scalar;
\item Suppose $V, V_1\in \mathcal{O}_{\Vir }$. Then $V\cong
V_1$ if and only if $\Soc_{\mathfrak{a}}(V)\cong
\Soc_{\mathfrak{a}}(V_1)$ and $z$ acts on $V$, $V_1$ as the same
scalar.
\end{enumerate}
\end{lemma}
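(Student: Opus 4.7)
The plan is to follow the approach of \cite{MZ}, reducing simple modules in $\mathcal{O}_{\Vir}$ to modules induced from simple $\mathfrak{a}$-modules via the socle construction. For the nonvanishing of $\Soc_{\mathfrak{a}}(V)$, I would use Lemma~\ref{eq-condition}(b) to fix $0\neq v\in V$ with $\Vir^{(s)}v=0$ for some $s$. Since $\mathfrak{a}^{(s)}=\span\{d_i:i\geq s\}$ is an ideal of $\mathfrak{a}$, the cyclic module $U(\mathfrak{a})v$ descends to a module over the finite-dimensional solvable Lie algebra $\mathfrak{a}/\mathfrak{a}^{(s)}$, which must contain a simple $\mathfrak{a}$-submodule; hence $\Soc_{\mathfrak{a}}(V)\neq 0$.

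Next, let $M\subseteq V$ be any minimal nonzero $\mathfrak{a}$-submodule. By Lemma~\ref{char-O(w)}(a) there exists $r\in\Z_+$ with $d_iM=0$ for $i>r$ and $d_r$ acting bijectively on $M$. Since $V$ is simple as a $\Vir$-module, $V=U(\Vir)M$; using PBW with the complement $\Vir^-=\span\{d_{-i}:i\geq 1\}$ we obtain the canonical surjection $\phi\colon\Ind_{\mathfrak{a}+\C z_1}^{\Vir}M\tto V$. When $V$ is not a highest weight module I would establish injectivity of $\phi$ by contradiction: choose a nonzero kernel element of minimal $\Vir^-$-filtration degree and apply $d_1,d_2,\dots$ successively to reduce the negative modes; after finitely many steps one obtains a nonzero vector in the $M$-layer annihilated by $d_i$ for all $i\geq 1$, which means $V$ is generated as a $\Vir$-module by a highest-weight vector, contradicting the assumption. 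This gives $V\cong\Ind_{\mathfrak{a}+\C z_1}^{\Vir}M$. Uniqueness of the simple $\mathfrak{a}$-submodule then follows, because in the induced-module picture the only subspace on which $d_i$ ($i>r$) vanishes and $d_r$ acts as its prescribed scalar is $M$ itself; hence $M=\Soc_{\mathfrak{a}}(V)$, and the nonvanishing argument applied inside an arbitrary nonzero $\mathfrak{a}$-submodule $N$ produces a simple submodule that must coincide with $M$, proving essentialness.

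Part (3) then follows from the universal property of induction: an isomorphism $V\to V_1$ functorially restricts to an $\mathfrak{a}$-module isomorphism of socles and matches the scalar actions of $z_1$, while conversely such matching data produces an isomorphism between the corresponding induced modules (or between highest-weight quotients in the HW case, where $\Soc_{\mathfrak{a}}(V)$ is one-dimensional and recovers the usual highest-weight data). The main obstacle I anticipate is injectivity of $\phi$ in the non-highest-weight case: this is the one step where the non-HW hypothesis is genuinely needed, and the degree-reduction argument requires careful control—in particular, the bijectivity of $d_r$ on $M$ supplied by Lemma~\ref{char-O(w)}(a) is essential for ruling out accidental cancellations among the PBW monomials when one applies the positive $d_i$'s to a minimal kernel element.
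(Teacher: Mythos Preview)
The paper does not prove this lemma; it simply records it as a consequence of Lemma~\ref{eq-condition} and \cite[Theorem~2]{MZ}. Your reconstruction has the right overall architecture, but the first step contains a genuine gap. The assertion that $U(\mathfrak{a})v$, as a module over the finite-dimensional solvable algebra $\mathfrak{a}/\mathfrak{a}^{(s)}$, ``must contain a simple $\mathfrak{a}$-submodule'' does not follow: infinite-dimensional modules over solvable (even one-dimensional abelian) Lie algebras need not have any minimal nonzero submodule---take $\C[x]$ with a single generator acting as multiplication by $x$. And $U(\mathfrak{a})v$ is typically infinite-dimensional: for the simple Whittaker module $W_{\psi_1}$ with $\psi_1(d_1)\ne0$ one checks $U(\mathfrak{a})w_{\psi_1}=\C[d_0]w_{\psi_1}$. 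A workable argument instead minimizes the level: take $r$ minimal with $N_r:=\{v\in V:d_iv=0\ \forall\,i>r\}\ne0$, so that $d_r$ is injective on the $\mathfrak{a}$-submodule $N_r$ by minimality of $r$, and then use this injectivity together with the $\Vir$-simplicity of $V$ to locate the simple socle inside $N_r$.

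Your injectivity argument for $\phi$ is also slightly garbled. Once a nonzero kernel element has been reduced to the $M$-layer the contradiction is immediate, because $\phi|_M$ is the inclusion $M\hookrightarrow V$ and hence $\ker\phi\cap M=0$; no highest-weight vector in $V$ is produced (the reduced element sits in $\ker\phi$, so its image in $V$ is $0$), and the reduced element is annihilated only by $d_i$ for $i>r$, not for all $i\ge1$. The role of the non-highest-weight hypothesis is rather that it forces $r\ge1$ in Lemma~\ref{char-O(w)}(a): then operators $d_{r+1},d_{r+2},\dots$ genuinely lower the $\Vir^-$-filtration degree (the bijectivity of $d_r$ on $M$ preventing the leading term from vanishing), whereas for $r=0$ the simple $\mathfrak{a}$-module $M$ is one-dimensional, $\Ind_{\mathfrak{a}+\C z_1}^{\Vir}M$ is a Verma module, and $V$ is its simple highest-weight quotient rather than the full induced module.
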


\

This paper is organized as follows. In Sect.2, we obtain a general result on
sufficient conditions for tensor product modules  to be simple over
an arbitrary Lie algebra. This result can be applied to many known
cases and several cases in this paper. In Sect.3, we   first
classify simple modules in $\mathcal{O}_{\H}$, and then construct
generalized oscillator representations of $\LL$ by extending the the
$\H$-module structure on simple modules in $\mathcal{O}_{\H}$. Many
simple modules in $\mathcal{O}_{\LL}$ with nonzero action of $z_3$
are proved to be decomposed into a tensor product of an oscillator
representation of $\LL$ and a simple $\Vir$-module. Then we apply
this theory to completely determine conditions for Whittaker modules over $\LL$ (in the more general setting as in \cite{BM}) to be
simple. In Sect.4, we use the ``shifting technique" to determine the
necessary and sufficient conditions for  the tensor products of
highest weight modules and modules of intermediate series over $\LL$
to be simple. In Sect.5,  we   first classify simple modules in
$\mathcal{O}_{\WW}$, and then establish the ``embedding trick"
 to make these
simple $W\in \mathcal{O}_{\WW}$ into simple $\LL$-modules
$W[\lambda]$ for any $\lambda\in\C^*$. By taking tensor product, we
obtain more simple $\LL$-modules.

\section{Simplicity of tensor product modules}

In this section, we will prove some general results for tensor
product modules, which will be frequently used later.

Let $V$ be a module over a Lie algebra $L$. For any $v\in V$, the
annihilator of $v$ is defined as $\ann_{L}(v)=\{g\in L| g v=0\}$.
For any $S\subset V$, define $$\ann_L(S)=\cap_{v\in S}\ann_{L}(v).$$

\begin{lemma}\label{density} Let $L$ be a Lie algebra over $\C$ with a countable basis, and   $V$ be a  simple $L$-module. For any $n\in \Z_+$ and any linearly independent subset
$\{v_1,v_2,\ldots v_n\}\subset V$, and any subset
$\{v_1',\ldots,v_n'\}\subset V$, there exists some $u\in U(L)$, such
that
 $$uv_i=v_i',\forall i=1,2,\ldots, n.$$
 \end{lemma}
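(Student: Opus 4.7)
The plan is to deduce this statement from two classical ingredients: a Schur-type lemma (due to Dixmier) valid for algebras of countable dimension over an uncountable algebraically closed field, followed by the Jacobson density theorem. Since $L$ has a countable basis over $\C$, the PBW theorem gives that $U(L)$ has countable $\C$-dimension. Simplicity of $V$ gives $V=U(L)v$ for any $0\ne v\in V$, so $\dim_\C V$ is at most countable.

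The first step is to prove $\End_{U(L)}(V)=\C$. Suppose $\phi\in\End_{U(L)}(V)$ is not a scalar. Then for every $\lambda\in\C$, $\phi-\lambda\cdot \operatorname{id}$ is a nonzero $L$-endomorphism of $V$, hence injective and surjective by simplicity, and therefore invertible. Pick any $0\ne v\in V$. The evaluation map $\C(x)\to V$ sending $f(x)\mapsto f(\phi)(v)$ is well-defined (every denominator is invertible) and $L$-linear. If $(p(\phi)/q(\phi))(v)=0$ with $p\neq 0$, then $p(\phi)(v)=0$, and the submodule $\ker p(\phi)$ is nonzero and hence equals $V$; factoring $p$ over $\C$, one of the factors $\phi-\mu$ must annihilate some vector, contradicting the invertibility above. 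Hence the evaluation map is injective, embedding the uncountable-dimensional $\C(x)$ into the countable-dimensional $V$, a contradiction.

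With $\End_{U(L)}(V)=\C$ established, the second step is to apply the Jacobson density theorem to the image of $U(L)$ inside $\End_\C(V)$: this image is dense in the finite topology, which literally says that for any $\C$-linearly independent $v_1,\dots,v_n\in V$ and any prescribed targets $v_1',\dots,v_n'\in V$, there exists $u\in U(L)$ with $uv_i=v_i'$ for all $i$. Alternatively one proves this by induction on $n$: using simplicity, for each $i$ one finds $u_i\in U(L)$ with $u_iv_i=v_i'$ and $u_iv_j=0$ for $j\ne i$ (producing the annihilating factors one at a time from the inductive hypothesis applied to the remaining $n-1$ vectors), and then sums them.

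The main obstacle is the Schur-type lemma, where both the countability of the basis of $L$ and the uncountability of $\C$ are indispensable: without countability of $U(L)$ one cannot force $V$ to be countable dimensional, and without uncountability of $\C$ one cannot produce an injection from an uncountable-dimensional field into $V$. Once this has been secured, the density conclusion is a standard consequence and the proof is complete.
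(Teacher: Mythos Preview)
Your proof is correct and follows essentially the same approach as the paper: both invoke Dixmier's countable-dimension version of Schur's lemma to get $\End_{U(L)}(V)=\C$, and then apply the Jacobson density theorem. The only difference is that you spell out the proof of the Schur-type lemma (via the $\C(x)$-embedding argument) whereas the paper simply cites it.
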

 \begin{proof}
 Denote $R=U(L)/\ann_{U(L)}(V)$.  Then $R$ is an associative algebra with countable
 basis. It is well known that any endomorphism of a simple module over a countably generated associative $\C$-algebra
  is a scalar (Proposition 2.6.5 in [D]). Thus ${\rm Hom}_{R}(V,V)\cong
 \C$. Note that $V$ is a faithful and simple $R$-module. From
 the Jacobson Density Theorem (Page 197, [J]), we know that $R$ is isomorphic to a dense
 ring of endomorphisms of the $\C$-vector space $V$. The Lemma follows.
\end{proof}

Now we can give some useful sufficient conditions for a tensor product module to be simple.

\begin{theorem}\label{thm1} Let $L$ be a Lie algebra over $\C$ with a  countable basis, and  $V_1, V_2$ be $L$-modules.
Suppose that one of the following conditions holds:

\begin{enumerate}\item The module $V_1$ is simple and $\ann_L(v)+\ann_L(S)=L$ for all $v\in V_1$
and all finite subsets $S\subset V_2$;
\item
 For any finite subset
$S\subset V_2$, $V_1$ is a simple
$\ann_L(S)$-module.\end{enumerate} Then
\begin{enumerate}[$($a$)$] \item  Any
submodule of $V_1\otimes V_2$ is of the form $V_1\otimes V_2'$, for
a submodule $V_2'$  of $V_2$; \item If $V_1, V_2$ are simple, then
$V_1\otimes V_2$ is simple.\end{enumerate}
\end{theorem}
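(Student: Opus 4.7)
The plan is to first reduce to the case where condition (2) holds, and then prove both (a) and (b) in that case. To see that (1) implies (2), fix a finite subset $S\subset V_2$ and $0\ne v\in V_1$, and set $V'=U(\ann_L(S))v$. For any $g\in L$ and any $w\in V'$, condition (1) yields a decomposition $g=h+k$ with $h\in\ann_L(w)$ and $k\in\ann_L(S)$, so $gw=kw\in \ann_L(S)\cdot w\subset V'$. Thus $V'$ is a nonzero $L$-submodule of the simple module $V_1$, which forces $V'=V_1$. This exhibits $V_1$ as a simple $\ann_L(S)$-module, which is condition (2).

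To prove (a) under (2), let $W$ be a nonzero submodule of $V_1\otimes V_2$ and write $0\ne w=\sum_{i=1}^n v_1^i\otimes v_2^i\in W$ with $n$ minimal; a standard rewriting argument then shows that both families $\{v_1^i\}$ and $\{v_2^i\}$ are linearly independent. Set $S=\{v_2^1,\ldots,v_2^n\}$; then $\ann_L(S)$ is a Lie subalgebra of $L$ with a countable basis, and by condition (2) it acts simply on $V_1$. For any fixed $j$ and any $v\in V_1$, Lemma \ref{density} applied to $\ann_L(S)$ produces $u\in U(\ann_L(S))$ satisfying $uv_1^j=v$ and $uv_1^i=0$ for $i\ne j$. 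A short induction on the length of a monomial in $U(\ann_L(S))$, together with $\ann_L(S)\cdot v_2^k=0$, shows that $u(v_1\otimes v_2^k)=(uv_1)\otimes v_2^k$ for all $v_1\in V_1$ and all $k$. Consequently $u\cdot w=v\otimes v_2^j\in W$, whence $V_1\otimes v_2^j\subset W$ for every $j$.

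To finish, define $V_2'=\{v_2\in V_2:V_1\otimes v_2\subset W\}$. This is a subspace containing each $v_2^i$, and it is $L$-stable because for $v_2\in V_2'$ and $g\in L$ the identity $v_1\otimes gv_2=g(v_1\otimes v_2)-(gv_1)\otimes v_2$ shows $V_1\otimes gv_2\subset W$. By construction $V_1\otimes V_2'\subset W$, and running the previous paragraph on an arbitrary element of $W$ places it in $V_1\otimes V_2'$, so $W=V_1\otimes V_2'$, proving (a). For (b), if $V_1$ and $V_2$ are both simple and $W$ is nonzero, then $V_2'$ is a nonzero submodule of the simple module $V_2$, hence $V_2'=V_2$ and $W=V_1\otimes V_2$.

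I expect the main obstacle to be two technical bookkeeping points: first, justifying the ``minimal length'' normal form so that both tensor families are simultaneously linearly independent (needed to invoke Lemma \ref{density}); second, carefully verifying that elements of $U(\ann_L(S))$ act on $v_1\otimes v_2^k$ purely through the first tensor factor, which is what allows the density lemma to separate the summands of $w$ cleanly. Everything else, including the derivation of condition (2) from condition (1) and the verification that $V_2'$ is an $L$-submodule, is essentially routine.
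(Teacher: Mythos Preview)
Your proof is correct and follows essentially the same approach as the paper's: reduce (1) to (2), then use the density lemma (Lemma~\ref{density}) applied to the subalgebra $\ann_L(S)$ to isolate individual tensor summands and show every submodule has the form $V_1\otimes V_2'$. Your argument for (1)$\Rightarrow$(2) is slightly more hands-on than the paper's one-line computation $U(L)v = U(\ann_L(S))U(\ann_L(v))v = U(\ann_L(S))v$, but the content is identical.
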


\begin{proof} (1) $\Rightarrow$ (2). Suppose that Condition 1 holds. For any nonzero $v\in V_1$ and a finite subset $S$ in $V_2$,
 we have
$$V_1=U(L)v=U(\ann_L(v)+\ann_L(S))v\hskip 3cm$$ $$=U(\ann_L(S))U(\ann_L(v))v=(U(\ann_L(S))v,$$
So $V_1$ is simple as $\ann_L(S)$-module. Thus Condition 2 holds.

\

Now we suppose that Condition 2 holds.

(a) Let $M$ be a nonzero submodule of $V_1\otimes V_2$. For any
nonzero $X\in M$, write $X=\sum_{i=1}^s v_{1,i}\otimes v_{2,i}\in
 M$ with minimal $s$. Then $\{v_{1,1},\ldots,v_{1,s}\}$ and $\{v_{2,1},\ldots, v_{2,s}\}$
 are linearly independent sets.

  Denote $L_2=\ann_{L}(\{v_{2,j}|j=1,2\ldots,s\})$. Then $V_1$ is a simple  $L_2$-module.
Now from Lemma \ref{density}, there exists some $u_0\in U(L_2)$ such
that
  $u_0v_{1,1}=v_{1,1}$, and $u_0v_{1,i}=0$ for $i=2,\ldots,s$.
  So $$u_0X=u_0(\sum_{i=1}^s v_{1,i}\otimes
v_{2,i})=\sum_{i=1}^s (u_0v_{1,i}\otimes v_{2,i})=v_{1,1}\otimes
v_{2,1}\in M.$$ For any $u\in U(L_2)$, we have $(uv_{1,1})\otimes
v_{2,1}=u(v_{1,1}\otimes v_{2,i})\in M.$ Thus $V_1\otimes
v_{2,1}=(U(L_2)v_{1,1})\otimes v_{2,1}\subset M$. Similarly we have
\begin{equation}V_1\otimes
v_{2,i}\subset M, \forall i=1,2,\ldots,s.\end{equation}

We have proved that
\begin{equation}M=V_1\otimes V_2',\end{equation}
where $V_2'=\{v_2\in V_2| V_1\otimes v_2\subseteq M\}$. For any
$v_1\in V_1$, $v_2\in V_2'$, and for all $g\in L$, we have
$v_1\otimes g v_2=g(v_1\otimes v_2)-gv_1\otimes v_2\in M$. So
 $V_2'$ is an $L$-submodule of $V_2$.

Part (b) follows from (a).\end{proof}

\begin{example} Theorems 3.3 and 3.4 in \cite{GZ} can follow from our Theorem 7. For other applications, see Theorems  11, 13, 33.
\end{example}

\begin{lemma}\label{induced}Let $W$ be a module over a Lie algebra $L$, $\mathfrak{g}$ be a subalgebra of $L$, and $B$ be a $\mathfrak{g}$-module.
Then the $L$-module homomorphism $\tau:
\Ind_{\mathfrak{g}}^L(W\otimes B)\rightarrow W\otimes
\Ind_{\mathfrak{g}}^L(B)$ induced from the inclusion map $W\otimes
B\rightarrow W\otimes \Ind_{\mathfrak{g}}^L(B)$ is an $L$-module
isomorphism.\end{lemma}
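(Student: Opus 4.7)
The plan is to exhibit $\tau$ as a filtered linear map whose associated graded is the obvious tensor-swap, a visible bijection, using the Poincar\'e--Birkhoff--Witt theorem.

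First I would verify that $\tau$ is well-defined. The inclusion $W \otimes B \to W \otimes \Ind_{\mathfrak{g}}^L(B)$, $w \otimes b \mapsto w \otimes (1 \otimes b)$, is a $\mathfrak{g}$-module homomorphism: both sides carry the diagonal $\mathfrak{g}$-action, and inside the induced module one has $g \otimes b = 1 \otimes (gb)$ for $g \in \mathfrak{g}$. Hence by Frobenius reciprocity it extends uniquely to the stated $L$-linear $\tau$, given explicitly by $\tau(u \otimes (w \otimes b)) = u \cdot (w \otimes (1 \otimes b))$ for $u \in U(L)$.

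Next, fix a vector space decomposition $L = \mathfrak{g} \oplus V$ and an ordered basis $\{x_i\}_{i \in I}$ of $V$. By PBW, $U(L)$ is free as a right $U(\mathfrak{g})$-module on the ordered monomials $x^\alpha$ in the $x_i$'s. Writing $U_n$ for the span of such monomials of length at most $n$, both modules acquire exhaustive filtrations
\[
F_n = U_n \otimes_\C (W \otimes B) \subseteq \Ind_{\mathfrak{g}}^L(W \otimes B), \qquad F'_n = W \otimes_\C (U_n \otimes_\C B) \subseteq W \otimes \Ind_{\mathfrak{g}}^L(B),
\]
whose associated gradeds are canonically isomorphic via the tensor-swap $x^\alpha \otimes w \otimes b \leftrightarrow w \otimes x^\alpha \otimes b$. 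An induction on $|\alpha|$, combining $x^\alpha = x_{i_1} \cdot x^{\alpha - e_{i_1}}$ (where $i_1$ is the smallest index appearing, so that $x_{i_1} \cdot x^{\alpha - e_{i_1}}$ is already PBW-ordered) with the Leibniz rule $y(w \otimes v) = yw \otimes v + w \otimes yv$ for $y \in L$, then yields
\[
\tau(x^\alpha \otimes (w \otimes b)) = w \otimes (x^\alpha \otimes b) + (\text{element of } F'_{|\alpha|-1}).
\]
Thus $\tau$ is filtered and its associated graded is the tensor-swap, which is evidently a vector space isomorphism. Hence $\tau$ is bijective, and therefore an $L$-module isomorphism.

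The only real work is the triangular estimate: verifying via PBW that, in each Leibniz step, corrections really drop in $V$-degree once the right $U(\mathfrak{g})$-action built into the induced modules is absorbed into $B$. Beyond this routine bookkeeping the proof contains no essential difficulty.
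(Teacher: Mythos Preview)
Your proof is correct and follows essentially the same approach as the paper: both fix a complement $V$ to $\mathfrak{g}$ in $L$, use PBW to write ordered monomials in a basis of $V$, and prove by induction on monomial length that $\tau$ is upper-triangular with the tensor-swap on the diagonal, hence bijective. Your presentation is slightly more polished (explicit Frobenius reciprocity for well-definedness, filtered/associated-graded language), but the underlying argument is identical.
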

\begin{proof} Take a subspace $V\subset L$ such that $L={\mathfrak{g}}\oplus V$. Let $\{l_i|i\in I\}$, $\{b_j|j\in J_1\}$, $\{w_j|j\in J_2\}$ be bases of $V$, $B$ and $W$,
respectively. Denote $S=\{l_{i_1}^{k_1}l_{i_2}^{k_2}\cdots
l_{i_m}^{k_m}|i_1\prec i_2\prec\cdots \prec k_m\}$, where $\prec$ is
a total order on $I$, and $U_n=\span
\{l_{i_1}^{k_1}l_{i_2}^{k_2}\cdots l_{i_m}^{k_m}\in
S\,\,|\,\,k_1+\ldots+k_m=n\}$. Then $$T_1=\{x (w_s\otimes b_t)
\,\,|\,\, x\in S, s\in J_1,t\in J_2\},$$
$$T_2=\{w_{s}\otimes (x b_t)
\,\,|\,\,x\in S, s\in J_1,t\in J_2\}$$ are bases of
$\Ind_{\mathfrak{g}}^L(W\otimes B)$ and $W\otimes
Ind_{\mathfrak{g}}^L(B)$, respectively.

 For
any $x=l_{i_1}^{k_1}l_{i_2}^{k_2}\cdots l_{i_m}^{k_m}\in S$, by
induction on $n=\sum_{i=1}^m k_i$ we can prove that
\begin{equation}\tau(x (w_s\otimes b_t))\in w_s\otimes
(x b_t)+W\otimes \sum_{i<n} (U_i B),\end{equation} from which we may
easy deduce that $\tau$ is bijective.
\end{proof}

\section{Generalized oscillator representations of  $\LL$}

In this section we will first classify simple modules in
$\mathcal{O}_{\H}$, and then construct generalized oscillator
representations of $\LL$ by extending the the $\H$-module structure
on simple modules in $\mathcal{O}_{\H}$. Since the representation
space is in general not the Fock space, we call the resulting
representation as generalized oscillator representations of $\LL$.
When the representation space is indeed the Fock space, we actually
obtain the usual  oscillator representations of $\LL$. This
generalizes the construction in Sect.2.3 of \cite{KR}.

\subsection{Simple modules in $\mathcal{O}_{\H}$}

For any $m\in\N$, we define $$\mathfrak{T}_m=\span\{I_i,
z_3|i=-m,-m+1,\ldots,m\},$$
       $$\H_{m}=\span\{I_i,z_3|i\ge -m, i\in \Z\}.$$

\begin{proposition}Let $B$, $B'$ be simple modules over $\mathfrak{T}_m$ for some $m\in \N$  with  nonzero action of $z_3$.

(a). The $\H$-module $\Ind_{{\H}_{m}}^\H B$ is simple, where $B$ is
regarded as $\H_{m}$-module by $I_i B=0, \forall i>m$. Moreover, all
nontrivial simple modules in $\mathcal{O}_\H$ can be obtained in
this way.

(b). As $\H$-modules, $\Ind_{\H_{m}}^\H B\cong \Ind_{\H_{m}}^\H B'$
if and only if $B\cong B'$ as $\mathfrak{T}_m$-modules.
\end{proposition}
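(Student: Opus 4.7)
The plan is to analyze $M := \Ind_{\H_m}^{\H} B$ through its PBW realization. Setting $\H_m^- := \span\{I_i \mid i < -m\}$ and $\H_m^+ := \span\{I_i \mid i > m\}$, both of which are abelian subalgebras of $\H$, the decomposition $\H = \H_m^- \oplus \H_m$ yields $M \cong U(\H_m^-) \otimes B \cong \C[I_{-m-1}, I_{-m-2}, \ldots] \otimes B$ as vector spaces. Since $B$ is simple and $z_3$ is central and nonzero on $B$, $z_3$ acts by a nonzero scalar $\l$ throughout $M$. The key computational point is that $I_{m+k}$ annihilates $B$ and $[I_{m+k}, I_{-m-j}] = (m+k)\delta_{k,j} z_3$, so $I_{m+k}$ acts on $M$ as the first-order differential operator $(m+k)\l \cdot \partial/\partial I_{-m-k}$, while $\mathfrak{T}_m$ commutes with every $I_{-m-j}$ and hence acts on $M$ only through the second tensor factor.

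To prove simplicity of $M$ for part (a), I take a nonzero submodule $N \subseteq M$ and any $0 \neq v = \sum_{\mathbf{a}} f_{\mathbf{a}} \otimes b_{\mathbf{a}} \in N$ with $f_{\mathbf{a}} = \prod_k I_{-m-k}^{a_k}$ distinct monomials and $b_{\mathbf{a}} \neq 0$. Choosing a multi-index $\mathbf{a}^*$ of maximal total degree $|\mathbf{a}^*| = \sum_k a_k^*$ among those appearing, I apply $D := \prod_k I_{m+k}^{a_k^*} \in U(\H_m^+)$. The derivation action shows $D(f_{\mathbf{a}} \otimes b_{\mathbf{a}})$ vanishes unless $a_k \geq a_k^*$ for every $k$; combined with $|\mathbf{a}| \leq |\mathbf{a}^*|$, this forces $\mathbf{a} = \mathbf{a}^*$, so $Dv$ is a nonzero scalar multiple of $1 \otimes b_{\mathbf{a}^*}$. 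Hence $N \cap (1 \otimes B) \neq 0$, and simplicity of $B$ as a $\mathfrak{T}_m$-module gives $1 \otimes B \subseteq N$; finally $U(\H_m^-)(1 \otimes B) = M$ forces $N = M$.

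For the converse part of (a), let $V \in \mathcal{O}_{\H}$ be simple and nontrivial. First $z_3$ must act by a nonzero scalar, for otherwise $V$ would factor through the abelian quotient $\H/\C z_3$, hence be one-dimensional, and Condition~A would then make $V$ trivial. Condition~A further supplies some $m \geq 1$ with $V^{\H_m^+} := \{v \in V \mid I_i v = 0 \text{ for all } i > m\}$ nonzero; this is an $\H_m$-submodule on which $\H_m^+$ acts by zero, so there is a canonical surjection $\Ind_{\H_m}^{\H} V^{\H_m^+} \tto V$. Injectivity follows from the same maximal-degree differential-operator argument applied to an element of the kernel (using that $\H_m^+$ acts by zero on $V^{\H_m^+}$), giving a PBW isomorphism $V \cong U(\H_m^-) \otimes V^{\H_m^+}$. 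If $B \subseteq V^{\H_m^+}$ is any nonzero $\mathfrak{T}_m$-submodule, one verifies that $U(\H_m^-) B$ is $\H$-stable, using $[\H_m^+, U(\H_m^-)] \subseteq U(\H_m^-) \cdot \C z_3$ together with $\H_m^+ B = 0$; hence $U(\H_m^-) B = V$, and the PBW decomposition forces $B = V^{\H_m^+}$. Part (b) then follows immediately, since the functor $V \mapsto V^{\H_m^+}$ inverts $\Ind_{\H_m}^{\H}$.

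The main obstacle I anticipate is the bookkeeping of the differential-operator reduction: one must track both componentwise dominance and total-degree maximality on multi-indices to be sure that $D$ truly isolates a single summand, and one must carefully verify the commutator identity $[\H_m^+, U(\H_m^-)] \subseteq U(\H_m^-) \cdot \C z_3$ needed to show that $U(\H_m^-) B$ is $\H$-stable in the converse direction.
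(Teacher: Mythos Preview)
Your proof is correct and follows essentially the same approach as the paper: both arguments rest on the PBW realization $\Ind_{\H_m}^{\H} B \cong U(\H_m^-)\otimes B$ together with the observation that, because $z_3$ acts by a nonzero scalar, the elements $I_{m+k}$ act as the derivations $(m+k)\lambda\,\partial/\partial I_{-m-k}$ and hence can strip any nonzero element down into $1\otimes B$. Your version is more explicit about the monomial bookkeeping, and for part~(b) you recover $B$ via the invariants functor $V\mapsto V^{\H_m^+}$ whereas the paper phrases the same recovery as taking the $\H_m$-socle; these are the same subspace here, so the two arguments are equivalent.
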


\begin{proof} Let $K_{m}=\span\{I_{-m-j}|j\in \N\}.$

(a). Using PBW Theorem and nonzero action of $z_3$, we can easily
prove that the $\H$-module $\Ind_{\mathcal{\H}_{m}}^\H B =U(K_m)B$
is simple.

Now suppose that $V\in \mathcal{O}_\H$ is simple and nontrivial.
Take a nonzero $v\in V$ such that  $m\in\N$ is minimal with
$I_{m+j}v=0$ for all $j\in\Z_+$.

{\bf Claim 1.} $U(\H_m)v$ is a simple $\H_m$-module.

Since $V=U(K_m)U(\H_m)v=U(K_m)U(\mathfrak{T}_m)v$ is nontrivial, we
see that $U(\mathfrak{T}_m)v\not\subseteq \C v$, hence the action of $z_3$ is
nonzero. We can deduce that $V$ is a free $U(K_m)$-module on
$U(\H_m)v$. Since $V$ is a simple $\H$-module, we deduce that
$B=U(\H_m)v$ is a simple $\H_m$-module.

Thus $V=\Ind_{{\H}_{m}}^\H B$.

(b). Noting that $B$ and $ B'$ are the socles of $\H_m$-modules
$\Ind_{\H_{m}}^\H B$ and $\Ind_{\H_{m}}^\H B'$ respectively, we see
that, if $\Ind_{\H_{m}}^\H B\cong \Ind_{\H_{m}}^\H B'$  as
$\H_m$-modules, hence  $B\cong B'$ as $\mathfrak{T}_m$-modules. The
converse is trivial.
\end{proof}

From the above theorem, to classify all simple modules over $\H$ is
equivalent to classifying all simple modules over the
finite-dimensional  Heisenberg algebras $\mathfrak{T}_m$ for all
$m\in \N$, which is equivalent to classifying all simple modules
over the rank $m$ Weyl algebras $A_m$. Such a classification is only
known for $m=1$, see \cite{Bl}.

\begin{example} Take $m=1$, make $\C v$ into a module over $\mathfrak{b}=\C I_0+\C I_{-1}+\C z_3$ by  $I_0v=\dot I_0v, I_{-1}v=av, z_3v=\dot z_3v$ for $a,\dot I_0, \dot z_3\in\C$ with $\dot z_3\ne0$. We have the simple $\mathfrak{T}_1$-module $B=\Ind^{\mathfrak{T}_1}_\mathfrak{b}\C v$. Then we obtain the simple weight $\H$-module $\Ind_{\mathcal{\H}_{m}}^\H B$ which has all nonzero weight spaces infinite-dimensional. In this manner similar to Sect.3.1 in \cite{BBFK}, one can construct a lot of simple weight modules in $\mathcal{O}_{\H}$.
\end{example}

\begin{example} Let $B=\C[t, t^{-1}]$ be a simple $\mathfrak{T}_1$-module defined by
$I_{-1}t^n=t^{n+1}, I_1t^n=t^{n-1}(\alpha+n), I_0t^n=at^n, z_3t^n=t^n$ for $\alpha\in\C\setminus\Z$ and $a\in\C$.
Then we obtain the simple  $\H$-module $\Ind_{\mathcal{\H}_{m}}^\H B$. In this manner using simple modules over the Weyl algebra as in \cite{Bl}, one can construct a lot of simple modules in $\mathcal{O}_{\H}$.
\end{example}

\subsection{Generalized oscillator representations of  $\LL$}

In this subsection, we prove that for any $\dot{z_2}\in \C$, a
simple representation $H\in \mathcal{O}_{\H}$ with nonzero action of
$z_3$  can always be extended to a representation of $\LL$ with
$z_2$ acting as scalar $\dot{z_2}$. Here we will use oscillator-like
representations on $H$.

Let $\sigma$ be an endomorphism of some Lie algebra $L$, and $V$ be
any module of $L$. We can make $V$ into another $L$-module, by
defining the new action of $L$ on $V$ as

\begin{equation}x \circ v = \sigma(x) v, \forall x\in L,v\in V.\end{equation}

We will call the new module as {\it the twisted module of $V$ by}
$\sigma$, and denote it by $V^{\sigma}$. Two $L$-modules $V$ and $W$
are said to be equivalent if there exists some automorphism $\sigma$
of $L$ such that $V=W^{\sigma}$.

For any \begin{equation}\a=\sum_{i\in \Z}a_it^i\in \C[t,t^{-1}],
b\in \C,\end{equation} we have the $\sigma=\sigma_{\a,b}\in
\Aut(\LL)$ defined as
$$\sigma(d_n)=d_n+t^n(\a+nb)-(n+1)a_{-n}z_2\hskip 3cm$$
\begin{equation} -(\frac{\sum_ia_ia_{-n-i}}{2}
+a_{-n}nb)z_3+\delta_{n,0}b(z_2+\frac{b}{2}z_3)\end{equation}
\begin{equation}\sigma(t^n)=t^n+\delta_{n,0}bz_3-a_{-n}z_3,\sigma(z_1)=z_1-24bz_2-12b^2z_3,\end{equation}
\begin{equation}\sigma(z_2)=z_2+bz_3, \sigma(z_3)=z_3. \end{equation}
For more details, see \cite{LGZ}.

\begin{theorem}Let $\dot{z_2}, \dot{z_3}\in\C$ with $\dot{z_3}\ne0$. Let $H\in \mathcal{O}_\H$ be a simple module with $z_3$ acting
as a   scalar $\dot{z_3}$. Then the   $\H$-module structure  on $H$
can be extended to an $\LL$-module by

\begin{equation}z_1=1-\frac{12\dot{z_2}^2}{\dot{z_3}}, z_2=\dot{z_2}, z_3=\dot{z_3}, \end{equation}
\begin{equation}\label{dk}d_k=-\frac{1}{2\dot{z_3}}\sum_{i\in \Z}:I_{-i} I_{i+k}:+\frac{(k+1)\dot{z_2}}{\dot{z_3}}I_k,\forall k\in \Z.\end{equation}
where,  for all $i,j\in \Z$, the normal order is defined as $$:I_{i}
I_{j}:=\left\{
                                       \begin{array}{cc}
                                        I_i I_j,&\mbox{if}\,\, i<j, \\
                                       I_j I_i, & \,\,\mbox{otherwise.}\\
                                     \end{array}
                                    \right.$$
The resulting $\LL$-module structure on $H$ will be denoted by
$H(\dot{z_2})$.\end{theorem}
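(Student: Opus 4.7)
The plan is to verify directly that formulas (3.7) and (3.8) endow $H$ with a well-defined $\LL$-module structure. Two things must be checked: that each $d_k$ gives a well-defined operator on $H$ (since the sum in (3.8) is infinite), and that all defining commutation relations of $\LL$ are satisfied.

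For well-definedness, fix $v\in H$. By Condition~A there exists $N\in\N$ with $I_j v=0$ for all $j\ge N$. In each summand $:I_{-i}I_{i+k}:$, the normal ordering places $I_{i+k}$ on the right when $-i<i+k$ and $I_{-i}$ on the right otherwise; in either extreme ($i\gg 0$ or $i\ll 0$) the rightmost factor kills $v$, so $\sum_i :I_{-i}I_{i+k}:v$ has only finitely many nonzero terms and the image lies in $H$. The relations $[I_n,I_m]=n\delta_{n,-m}z_3$ and the centrality of $z_3$ are inherited from the $\H$-module structure, and $z_1,z_2$ act as central scalars by construction.

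For $[d_n,I_m]$, I would split $d_n=L_n+A_n$ with $L_n=-\frac{1}{2\dot{z_3}}\sum_i :I_{-i}I_{i+n}:$ and $A_n=\frac{(n+1)\dot{z_2}}{\dot{z_3}}I_n$. The commutator $[I_{-i}I_{i+n},I_m]$ is a central multiple of $\dot{z_3}$ and so is independent of the order of the two $I$-factors; it evaluates to $(i+n)\delta_{i+n,-m}\dot{z_3}\,I_{-i}+(-i)\delta_{-i,-m}\dot{z_3}\,I_{i+n}$, so summing over $i$ and applying the $-1/(2\dot{z_3})$ prefactor gives $[L_n,I_m]=mI_{n+m}$. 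Adding $[A_n,I_m]=(n^2+n)\dot{z_2}\delta_{n,-m}$ produces precisely $mI_{n+m}+\delta_{n,-m}(n^2+n)z_2$.

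The main obstacle is $[d_n,d_m]$. Expanding $[d_n,d_m]=[L_n,L_m]+[L_n,A_m]+[A_n,L_m]+[A_n,A_m]$ and using $[L_n,I_m]=mI_{n+m}$ together with $[I_n,L_m]=-nI_{n+m}$, the two cross pieces combine to $\frac{\dot{z_2}}{\dot{z_3}}(m-n)(m+n+1)I_{n+m}$, which is exactly the $A_{n+m}$-part of $(m-n)d_{n+m}$; the abelian piece $[A_n,A_m]$ yields the central term $-(n^3-n)\dot{z_2}^2/\dot{z_3}\cdot\delta_{n,-m}$. The delicate step is the classical rank-one Sugawara identity $[L_n,L_m]=(m-n)L_{n+m}+\frac{n^3-n}{12}\delta_{n,-m}$, which I would establish by carefully bookkeeping the reordering anomaly that appears when commuting two doubly infinite normal-ordered quadratic sums; the diagonal contributions (where the four indices pair up) supply the central charge one term. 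Combining all central contributions gives $\frac{n^3-n}{12}\bigl(1-12\dot{z_2}^2/\dot{z_3}\bigr)\delta_{n,-m}=\frac{n^3-n}{12}z_1\delta_{n,-m}$, completing the Virasoro bracket and thereby the proof.
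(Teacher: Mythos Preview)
Your proposal is correct. You verify the $\LL$-relations directly by splitting $d_n=L_n+A_n$ into the Sugawara part and the linear correction, and then checking the four pieces of $[d_n,d_m]$ separately; the computations you sketch for $[L_n,I_m]$, $[A_n,I_m]$, the cross terms, and $[A_n,A_m]$ are all accurate, and the remaining ingredient is indeed the classical rank-one Sugawara identity $[L_n,L_m]=(m-n)L_{n+m}+\frac{n^3-n}{12}\delta_{n,-m}$.

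The paper takes a different, slightly slicker route. Rather than carrying the $A_n$-terms through every commutator, it observes that the family of automorphisms $\sigma_{\alpha,b}$ of $\LL$ (with $b=\dot{z_2}/\dot{z_3}$ and $\alpha=b$) satisfies $H(\dot{z_2})=H(0)^{\sigma_{b,b}}$: twisting by $\sigma_{b,b}$ turns the pure Sugawara operator $L_k$ into $L_k+\frac{(k+1)\dot{z_2}}{\dot{z_3}}I_k$ and adjusts the central scalars exactly as in (3.6). Hence one only needs to check that $H(0)$ is an $\LL$-module, which is precisely the classical oscillator construction in \cite{KR}. This buys a cleaner reduction---no cross-term bookkeeping---at the cost of first having to know the automorphism $\sigma_{\alpha,b}$. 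Your approach is more self-contained and makes transparent where each piece of the central term $1-12\dot{z_2}^2/\dot{z_3}$ comes from, but both arguments ultimately defer the hardest step (the reordering anomaly in $[L_n,L_m]$) to the standard Sugawara computation.
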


\begin{proof}  Let  $b=\frac{\dot{z_2}}{\dot{z_3}}$. Note that the righthand side in (\ref{dk}) is well defined as an operator on $H$, i.e.,
$-\frac{1}{2\dot{z_3}}\sum_{i\in \Z}:I_{-i} I_{i+k}:v+b(k+1)I_kv$
makes sense for any $v\in H$ since there are only finitely many
nonzero terms. It is straightforward to check that
$H(\dot{z_2})=H(0)^{\sigma_{b,b}}$.

So we only need to verify that $H(0)$ is a $\LL$ module. The proof
is similar to the arguments in Section 2.3 of \cite{KR}. The
verifications are tedious but straightforward. We omit the details.
\end{proof}

We see that (3.7) is the  oscillator-like actions on $H$. So we call
the $\LL$-module $H(\dot z_2)$ as a generalized oscillator
representation of $\LL$.

Note that for any $v\in H\in \mathcal{O}_\H$ with $z_3\ne 0$ and
$\dot{z_2}\in \C$, if $I_i v=0, \forall i\ge n$ for some $n\in \N$,
then in $H(\dot{z_2})$, we have
\begin{equation}d_i v=0,\forall i\ge 2n.\end{equation} We see that
as an $\V$-module, $H(\dot{z_2})\in \mathcal{O}_\V$ which will not
give new simple $\V$-modules.

If $H$ is a weight $\H$-module, then $H(\lambda)$ is a
weight module. When $H$ is a highest weight module over $\H$, the
$\V$-module $H(\dot{z_2})$ was constructed in Sect.3.4 of \cite{KR}.
For any $\lambda\in \C$, the $\LL$-module $H(\lambda)$ is simple if
and only if $H$ is simple as $\H$-module.

Note that the action of $z_1$ on  $H(\dot z_2)$ is determined by $z_2$ and $z_3$. In the next subsection we will give a method so that the action $z_1$ can be arbitrary.

\begin{example} Let $H$ be  the simple  $\H$-module $\Ind_{\mathcal{\H}_{1}}^\H B$
constructed in Examples 2 and 3. Using Theorem 10 we can obtain
simple $\LL$-modules $H(\dot z_2)$. From the simple weight modules
$H$ defined in Example 2, we obtain simple weight modules over $\LL$
whose nonzero weight spaces are infinite-dimensional.
\end{example}

\begin{example}Let $\dot z_2, \dot z_3, \dot I_0, a\in \C$.
Define $\theta(z_3)=\dot z_3\ne0, \theta(I_0)=\dot I_0,
\theta(I_1)=a, \theta(I_i)=0$ for $i>1$. Then we have the simple
$\H$-module $H=W_\theta$ in
 Lemma 4. From Theorem 10 we have the
simple $\LL$-module $H(\dot z_2)$ which is a Whittaker module
defined in \cite{LWZ}. Unlike Corollary 4.5 in  \cite{LWZ}, the
$\LL$-module $H(\dot z_2)$ is not free over $U(\LL^-)$.
\end{example}

\subsection{Simple modules from tensor product}

Let $V$ be a $\Vir$-module. Then we can regard $V$ as $\LL$-module
by defining $(\C z_2+\H)V=0$. The resulting $\LL$-module  will be
denoted by $V^{\LL}$.

\begin{theorem}\label{thm-10}Suppose that $\lambda, \mu \in \C$, $V, W$ are  $\Vir$-modules, and $H,K\in \mathcal{O}_\H$ are simple with   nonzero action of $z_3$.
 \begin{enumerate}\item  Any  $\LL$-submodule of  $V^{\LL}\otimes H(\lambda)$ are of the
form $(V')^{\LL}\otimes  H(\lambda)$ for  a $\Vir$-submodule $V'$ of
$V$. In particular, $V^{\LL}\otimes H(\lambda)$ is simple as
$\LL$-module if and only if $V$ is a simple $\Vir$-module.
\item $V^{\LL}\otimes H(\lambda)\cong W^{\LL}\otimes K(\mu)$ if and
only if $\lambda=\mu, V\cong W$, and $H\cong K$. \end{enumerate}
 \end{theorem}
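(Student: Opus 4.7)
The plan is to derive part (1) directly from Theorem~\ref{thm1}, and to prove part (2) by reading off the parameters $\lambda$, then $H$, and finally the $\Vir$-structure on $V$ from any given $\LL$-isomorphism.

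For part (1), note that the flip $v\otimes h\mapsto h\otimes v$ is an $\LL$-module isomorphism $V^{\LL}\otimes H(\lambda)\cong H(\lambda)\otimes V^{\LL}$, so it suffices to apply Theorem~\ref{thm1} with $V_{1}=H(\lambda)$ and $V_{2}=V^{\LL}$. Condition~(2) of that theorem demands that $H(\lambda)$ be simple over $\ann_{\LL}(S)$ for every finite $S\subset V^{\LL}$; but $\H$ (and $\C z_{2}$) annihilates every vector of $V^{\LL}$, so $\H\subseteq\ann_{\LL}(S)$, and $H(\lambda)$ restricted to $\H$ is precisely the simple module $H$. Theorem~\ref{thm1}(a) then gives the claimed form of every submodule, and the simplicity criterion follows because $\LL$-submodules of $V^{\LL}$ coincide with $\Vir$-submodules of $V$.

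For part (2), the direction $(\Leftarrow)$ is clear; assume $V,W\ne 0$ and let $\phi\colon V^{\LL}\otimes H(\lambda)\to W^{\LL}\otimes K(\mu)$ be an $\LL$-module isomorphism. Since $z_{2}$ kills $V^{\LL}$ and $W^{\LL}$ and acts as $\lambda$ on $H(\lambda)$ and as $\mu$ on $K(\mu)$, we immediately get $\lambda=\mu$. Restricting $\phi$ to $\H$ (which acts only on the second tensor factor of each side), both modules are $\H$-isotypic of types $H$ and $K$ respectively, forcing $H\cong K$; identify $H=K$.

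To upgrade to a $\Vir$-isomorphism $V\cong W$, my key step is a separation-of-variables claim: any $\H$-linear map $\psi\colon V\otimes H\to W\otimes H$ has the form $\bar\psi\otimes\mathrm{id}_{H}$ for a unique linear $\bar\psi\colon V\to W$. This uses $\mathrm{End}_{\H}(H)=\C$---the countable-basis Schur's Lemma appearing in the proof of Lemma~\ref{density}---by projecting onto the summands $b\otimes H$ corresponding to a basis $\{b\}$ of $W$. Applied to our $\phi$, this produces a linear bijection $\bar\phi\colon V\to W$. Expanding the identity $\phi\bigl(d_{k}(v\otimes h)\bigr)=d_{k}\phi(v\otimes h)$ with $\phi=\bar\phi\otimes\mathrm{id}$, the terms $\bar\phi(v)\otimes d_{k}h$ match on both sides (since the oscillator $d_{k}$-action on $H(\lambda)=K(\mu)$ depends only on $\lambda$ and the $\H$-structure, which is preserved by the identification), yielding $\bar\phi(d_{k}v)=d_{k}\bar\phi(v)$; the corresponding identity for $z_{1}$ is analogous. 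Hence $\bar\phi$ is a $\Vir$-isomorphism. I expect the separation-of-variables lemma to be the main subtlety; once it is in hand, the $\Vir$-equivariance of $\bar\phi$ is a one-line cancellation enabled by the fact that the oscillator formula for $d_{k}$ on $H(\lambda)$ is uniquely determined by $\lambda$ and the $\H$-module.
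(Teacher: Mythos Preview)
Your proposal is correct and follows essentially the same route as the paper: part~(1) is exactly the paper's one-line application of Theorem~\ref{thm1} (via $\H\subset\ann_{\LL}(V^{\LL})$ and simplicity of $H$ over $\H$), and part~(2) matches the paper's argument of reading off $\lambda=\mu$ from $z_2$, then $H\cong K$ from the $\H$-isotypic structure, and finally extracting the $\Vir$-map via Schur's lemma for $H$. Your ``separation-of-variables'' lemma is just a clean repackaging of the paper's step where it shows each $\tau_v\in\mathrm{End}_{\H}(H)$ is a scalar; the subsequent cancellation yielding $\bar\phi(d_k v)=d_k\bar\phi(v)$ is identical.
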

 \begin{proof}(1) Note that $\H\subset \ann_{\LL}({V})$, and ${H(\lambda)}$ is  a simple $\H$-module. Thus the theorem follows from
 Theorem \ref{thm1}.

 (2) The sufficiency is trivial. Now suppose that $\psi: V^{\LL}\otimes H(\lambda)\rightarrow W^{\LL}\otimes K(\mu)$ is a $\LL$-module isomorphism. By comparing the action of $z_2$, we have $\lambda=\mu$.
 Now for any nonzero element $x\in V^{\LL}\otimes H(\lambda)$, it is clear that $U(\H)x\cong H$ as $\H$-module. Similarly we have $U(\H)y\cong K$ for any $0\ne y\in W^{\LL}\otimes K(\mu)$. Thus $H\cong K$ as $\H$-module.
 Without lose of generality, we may assume that $H=K$. Then $\psi$ becomes an $\LL$-module isomorphism from $V^{\LL}\otimes H(\lambda)$ to $W^{\LL}\otimes H(\lambda)$.
  Now for any nonzero $v\in V, h\in H$, write $\psi(v\otimes h)=\sum_{i=1}^s w_i\otimes k_i$ with minimal $k$. Then by Lemma \ref{density},
 there exists some $u\in U(\H)$ such that $uk_i=\delta_{1,i}k_i,i=1,2,\ldots,s$. Therefore $\phi(v\otimes uh)=\phi(u(v\otimes h))=w_1\otimes k_1$.
 Now write $\phi(v\otimes h)=w_1\otimes \tau_{v}(h)$. Then it is easy to check that $\tau_{v}:H\rightarrow H$ is a $U(\H)$-module
 automorphism for any $v\in V$. Recall that any endomorphism of a simple module over a countably generated associative $\C$-algebra
  is a scalar. Hence there exists a linear map $\nu: V\rightarrow W$ such that  $\phi(v\otimes h)=\nu(v)\otimes h$ for all $v\in V, h\in H$. Now
 from $d_n\phi(v\otimes h)=\phi(d_n (v\otimes h))$, we have $d_n\nu(v)\otimes h=\nu(d_n v)\otimes h$, $\forall v\in V, h\in H.$
 Hence $d_n\nu(v)=\nu(d_n(v))$. So $\nu:V\rightarrow W$ is a $\Vir$-module isomorphism.
   \end{proof}

\begin{theorem} Let $V\in \mathcal{O}_{\LL}$ be simple with nonzero action of $z_3$. If $V$ contains a simple $\H$-submodule $H$, then $V\cong H(\dot{z_2})\otimes U^{\LL}$ as $\LL$-modules for some $\dot{z_2}\in\C$ and some simple module $U\in \mathcal{O}_\mathcal{V}$.
\end{theorem}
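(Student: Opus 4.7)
The strategy is to construct, inside $V$, a ``complementary'' Virasoro action that commutes with $\H$, and then split off this Virasoro piece via a Hom-space. By Schur's lemma (as in the proof of Lemma \ref{density}) the central elements $z_1,z_2,z_3$ act on $V$ as scalars $\dot z_1,\dot z_2,\dot z_3$ with $\dot z_3\ne 0$. Since $V\in\mathcal{O}_\H$, the oscillator formula (3.7) with these scalars defines honest operators $D_n$ on $V$ (only finitely many terms of the sum contribute on any given vector). Repeating the computation that produced $H(\dot z_2)$ from $H$, which used only the $\H$-relations and normal ordering and is therefore valid on any module in $\mathcal{O}_\H$, yields $[D_n,I_k]=[d_n,I_k]$ and $[D_n,D_m]=(m-n)D_{n+m}+\delta_{n,-m}\tfrac{n^3-n}{12}c_H$ on $V$, where $c_H:=1-12\dot z_2^2/\dot z_3$.

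Set $d_n':=d_n-D_n$. Because $\ad(d_n)$ and $\ad(D_n)$ agree on every $I_k$, they agree on $D_m$ (a formal sum in the $I_j$), so $[d_n,D_m]=[D_n,D_m]$. Hence $[d_n',I_k]=0$ and $[d_n',d_m']=[d_n,d_m]-[D_n,D_m]=(m-n)d_{n+m}'+\delta_{n,-m}\tfrac{n^3-n}{12}(\dot z_1-c_H)$. Thus $V$ becomes a module for the direct sum $\mathfrak{g}:=\H\oplus\Vir'$, where $\Vir'$ is the Virasoro algebra spanned by the $d_n'$. Because $d_n=d_n'+D_n$ with $D_n\in U(\H)$, the $\mathfrak{g}$-submodules of $V$ coincide with its $\LL$-submodules, so $V$ is simple also as a $\mathfrak{g}$-module.

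Now put $U:=\operatorname{Hom}_\H(H,V)$, which is nonzero because the inclusion $H\hookrightarrow V$ is one such map. Since $\Vir'$ centralises $\H$, the post-composition rule $(d_n'\cdot f)(h):=d_n'(f(h))$ makes $U$ a $\Vir'$-module, and the evaluation map $\phi:H\otimes U\to V$, $\phi(h\otimes f):=f(h)$, is $\mathfrak{g}$-equivariant (for $\Vir'$: $\phi(h\otimes d_n'f)=d_n'(f(h))=d_n'\phi(h\otimes f)$). Surjectivity of $\phi$ is automatic: $\operatorname{Im}\phi$ is a nonzero $\mathfrak{g}$-submodule of the simple module $V$. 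For injectivity, take $\sum_{i=1}^s h_i\otimes f_i\in\ker\phi$ with $s$ minimal and $\{h_i\}$ linearly independent; Lemma \ref{density} applied to the simple $\H$-module $H$ produces, for any prescribed $h\in H$, an element $u\in U(\H)$ with $uh_1=h$ and $uh_i=0$ for $i>1$. Acting by $u$ on $0=\sum f_i(h_i)$ and using $\H$-linearity of each $f_i$ yields $f_1(h)=0$ for every $h$, contradicting minimality.

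It remains to verify that $U$ is a simple $\Vir$-module lying in $\mathcal{O}_\V$. Simplicity: any proper nonzero $\Vir'$-submodule $U_0\subsetneq U$ would give the proper nonzero $\mathfrak{g}$-submodule $H\otimes U_0\subsetneq V$, contradicting simplicity of $V$ as a $\mathfrak{g}$-module. For Condition A, fix $0\ne h\in H$ and $f\in U$, set $v:=f(h)$, and pick $N$ with $I_jv=d_kv=0$ for $j,k\ge N$; a short case analysis of $:I_{-i}I_{i+k}:v$ (split on $-i<i+k$ versus $-i\ge i+k$) shows every summand of $D_kv$ vanishes for $k\ge 2N$, so $(d_k'f)(h)=d_k'v=0$, and simplicity of $H$ as an $\H$-module forces $d_k'f=0$. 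Transporting the $\LL$-action through $\phi$ reproduces exactly the $H(\dot z_2)\otimes U^\LL$ structure described in Theorem \ref{thm-10}, completing the proof. The technically delicate step is carrying the oscillator identities from $H$ over to all of $V$; once those hold and $\mathfrak g$ acts, the tensor decomposition via $\operatorname{Hom}_\H(H,V)$ follows from the density argument and simplicity.
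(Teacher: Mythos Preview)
Your proof is correct and takes a genuinely different route from the paper's. The paper proceeds by induction from a large subalgebra: it picks $n$ so that $H$ becomes a simple module over $L=\span\{d_i,I_j,z_1,z_2,z_3\mid i\ge n,\,j\in\Z\}$, identifies $H\cong H(\dot z_2)\otimes\C v_0$ as $L$-modules for a suitable one-dimensional $\C v_0$, applies Lemma \ref{induced} to get $\Ind_L^{\LL}H\cong H(\dot z_2)\otimes(\Ind_L^{\LL}\C v_0)^{\LL}$, and then reads off the simple quotient $V$ via Theorem \ref{thm-10}. Your argument instead realises the commuting Virasoro action $\Vir'$ directly on all of $V$ through the Sugawara-type operators $D_n$, and then peels off the $\H$-factor with $U=\operatorname{Hom}_\H(H,V)$ and the evaluation map; this is the coset-construction viewpoint and is more explicit about why the complementary Virasoro exists. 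The paper's route is shorter because it recycles Lemma \ref{induced} and Theorem \ref{thm-10} wholesale, whereas yours is self-contained once the oscillator identities are granted. The only step that merits one more line is the passage from $[d_n',I_k]=0$ to $[d_n',D_m]=0$: for a fixed $v$ one must take a \emph{common} finite index set containing both the support of $D_m$ on $v$ and on $d_n'v$ before commuting $d_n'$ through the sum; your phrase ``a formal sum in the $I_j$'' is hiding exactly this finiteness bookkeeping.
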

\begin{proof} Suppose that $I_0, z_1,z_2,z_3$ act on $V$ as scalars $\dot{I_0}, \dot{z_1},\dot{z_2},\dot{z_3}$, where we have assumed that
$\dot{z_3}\ne 0$. Let $0\ne v\in H$ and $n_1, n_2\in \N$ with $d_i
v=0,$ for all $i\ge n_1$ and $I_j v=0,$ for all $j\ge n_2$. Denote
$n=\max\{n_1, 2n_2\}$. It is not hard to show that $H$ is a simple
module over $L=\span\{d_i,I_j,z_1,z_2,z_3|i\ge n,j\in \Z\}$. Define
the one dimensional $L$-module $\C v_0$ by $d_i,I_j,z_2,z_3$ acting
as zero for all $i\ge n, j\in \Z$, and $z_1$ acting as
$\dot{z_1}+\frac{12\dot{z_2}^2}{\dot{z_3}}-1$. It is clear that
$H\cong H(\dot{z_2})\otimes \C v_0$ as $L$-module. Now from Lemma
\ref{induced}, we have $$\Ind_L^{\LL}(H)\cong \Ind_{L}
(H(\dot{z_2})\otimes \C v_0)\cong H(\dot{z_2})\otimes \Ind_{L}^\LL
\C v_0.$$ Note that $\Ind_{L}^\LL \C v_0\in
\mathcal{O}_\mathcal{V}$, and that $V$ is a simple quotient module
over $\Ind_L^{\LL}(H)$. Now the theorem follows from Theorem
\ref{thm-10}.\end{proof}

{\bf Open Problem:} It will be interesting to classify all simple modules in $\mathcal{O}_{\LL}$.

\subsection{Whittaker modules over $\LL$}

Now we will focus on the so called Whittaker modules over $\LL$.

For any $m\in \Z_+$, recall $\LL^{(0,m)}=\span\{d_{m+i},t^i|i\in
\Z_+\}$. Let $\phi_m: \LL^{(0,m)}+\sum_{i=1}^3 \C z_i\rightarrow \C$
be a Lie algebra homomorphism. Then we have the one dimensional
module $\C_{\phi_m}=\C w_{\phi_m}$ over $\LL^{(0,m)}+\sum_{i=1}^3 \C
z_i$ with $x\cdot w_{\phi_m}=\phi_m(x)w_{\phi_m},\forall x\in
\LL^{(0,m)}+\sum_{i=1}^3 \C z_i$. The induced $\LL$-module
\begin{equation}\label{whittaker}\widetilde{W}_{\phi_m}=\Ind_{\LL^{(0,m)}+\sum_{i=1}^3 \C
z_i}^{\LL} \C_{\phi_m}\end{equation} will be called the universal
Whittaker module with respect to $\phi_m$. And any nonzero quotient
of $\widetilde{W}_{\phi_m}$ will be called a Whittaker module with
respect to $\phi_m$.

 Note that $\phi_m([\LL^{(0,m)},\LL^{(0,m)}])=0$.  Then we have $$\phi_m(d_{2m+j})=\phi_m(I_{m+j})=0, \forall j\in
 \N.$$  And if $m=0$, $\widetilde{W}_{\phi_m}$ will be a highest weight
module.

For the above $\phi_m$, we define a new Lie algebra homomorphism
$\phi'_m: \V^{(m)}+\C z_1  \rightarrow \C$ as follows

$$\phi'_m(z_1)=\phi_m(z_1)-1+12\frac{\phi_m(z_2)^2}{\phi_m(z_3)},$$
$$\phi'_m(d_{k})=0,\forall k\ge 2m+1,$$
$$\phi'_m(d_k)=\phi_m(d_k)+\frac{(\sum_{i=0}^k
\phi_m(I_i)\phi_m(I_{k-i}))-2(m+1)\phi_m(I_m)\phi_m(z_2)}{2\phi_m(z_3)},$$
for all $k=m,m+1, \ldots,2m.$ Then we have  the universal whittaker
$\Vir$-module $W_{\phi'_m}$:
\begin{equation}\label{whittaker}{W}_{\phi'_m}=\Ind_{\V^{(m)}+
\C z_1}^{\LL} \C w_{\phi'_m}\end{equation} where $x\cdot
w_{\phi'_m}=\phi'_m(x)w_{\phi'_m},\forall x\in \V^{(m)}+ \C z_1$.

\begin{theorem}\label{thm-whittaker-1} Suppose that $m\in \Z_+$, and $\phi_m$ and $\phi'_m$ are given above with $\phi_m(z_3)\ne
0$. Let $H=U(\H)w_{\phi_m}$ in $W_{\phi_m}$.

\begin{enumerate}
\item We have $\widetilde{W}_{\phi_m}\cong H(\phi_m(z_2))\otimes W_{\phi'_m}^{\LL}$.
Consequently, each simple whittaker module with respect to $\phi_m$
is isomorphic to   $H(\phi_m(z_2))\otimes T^{\LL}$ for a simple
quotient $T$ of $W_{\phi'_m}$.
\item  The  $\LL$-module $\widetilde{W}_{\phi_m}$ is  simple if and only if
$W_{\phi'_m}$ is a simple $\Vir$-module. Consequently, if $m\in \N$, then $\widetilde{W}_{\phi_m}$ is simple if and only if
$(\phi'_m(d_{2m-1}),\phi'_m(d_{2m}))\ne(0,0)$, i.e.,
$$2\phi_m(d_{2m})\phi_m(z_3)+\phi_m(I_{m})^2-2(m+1)\phi_m(I_{m})\phi_m(z_2)\ne0,\,\,{\text{or}}
$$
$$\phi_m(d_{2m-1})\phi_m(z_3)+\phi_m(I_{m})\phi_m(I_{m-1})-(m+1)\phi_m(I_{m})\phi_m(z_2)\ne0.
$$
\item Let $T_1, T_2$ be simple quotients of $W_{\phi'_m}$. Then
$H(\phi_m(z_2))\otimes T_1^\LL\cong H(\phi_m(z_2))\otimes T_2^\LL$
if and only if $T_1\cong T_2$.\end{enumerate}
\end{theorem}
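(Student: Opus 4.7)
The plan is to reduce part~(1) to an application of the tensor-product machinery of Section~2 and the oscillator construction of Theorem~10, and then derive parts~(2)--(3) as formal consequences. For part~(1), I will construct $\eta\colon \widetilde W_{\phi_m}\to H(\phi_m(z_2))\otimes W_{\phi'_m}^{\LL}$ as the unique $\LL$-linear extension of $w_{\phi_m}\mapsto w\otimes w_{\phi'_m}$, where $w$ denotes the cyclic $\H$-Whittaker vector of $H$ inside $H(\phi_m(z_2))$. By the universal property of $\widetilde W_{\phi_m}$ this requires verifying that $w\otimes w_{\phi'_m}$ is $\phi_m$-Whittaker over $\LL^{(0,m)}+\sum_{i=1}^3\C z_i$. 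The conditions on $z_2,z_3$ and on $I_j$ for $j\ge 0$ are immediate because the $\H$-action on $W_{\phi'_m}^{\LL}$ is trivial, and the $z_1$-identity is forced by combining the formula $z_1=1-12\phi_m(z_2)^2/\phi_m(z_3)$ on $H(\phi_m(z_2))$ with the definition of $\phi'_m(z_1)$. The substantive step is $d_k(w\otimes w_{\phi'_m})=\phi_m(d_k)(w\otimes w_{\phi'_m})$ for $k\ge m$: expanding \eqref{dk} and using the $\H$-Whittaker relations $I_j w=\phi_m(I_j)w$ (with $\phi_m(I_j)=0$ for $j\ge m+1$), the normal-ordered sum $\sum_{i\in\Z}{:}I_{-i}I_{i+k}{:}w$ collapses to a scalar multiple of $w$ (identically zero for $k\ge 2m+1$), and the scalar needed on the $W_{\phi'_m}^{\LL}$ factor to balance it is precisely $\phi'_m(d_k)$ as in the statement.

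To prove $\eta$ is bijective, I extend $\phi'_m$ to a character $\phi''_m$ of $\mathfrak g:=\V^{(m)}+\C z_1+\H+\C z_2+\C z_3$ by setting $\phi''_m$ to zero on $\H+\C z_2+\C z_3$; this is a Lie homomorphism because $\phi'_m(d_j)=0$ for $j\ge 2m+1$ ensures $(l-k)\phi'_m(d_{k+l})=0$ on $[d_k,d_l]$ for $k,l\ge m$, while the mixed brackets $[d_k,I_j]$ land in $\H+\C z_2\subset\ker\phi''_m$. Lemma~\ref{induced} with $L=\LL$, $W=H(\phi_m(z_2))$, $B=\C_{\phi''_m}$ yields $\Ind_{\mathfrak g}^{\LL}(W\otimes B)\cong H(\phi_m(z_2))\otimes\Ind_{\mathfrak g}^{\LL}\C_{\phi''_m}$, and a short PBW calculation identifies the second factor with $W_{\phi'_m}^{\LL}$ (all of $\H+\C z_2+\C z_3$ acts as zero on it). On the left side, the cyclic vector $w\otimes 1$ of $W\otimes B$ is $\phi_m$-Whittaker over $\LL^{(0,m)}+\sum\C z_i$, producing an $\LL$-surjection $\widetilde W_{\phi_m}\twoheadrightarrow \Ind_{\mathfrak g}^{\LL}(W\otimes B)$; matching of PBW bases indexed by monomials in $\span\{I_{-j},d_k\mid j>0,\,k<m\}$ forces this surjection to be an isomorphism, and hence $\eta$ is an iso. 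The consequence about simple Whittaker quotients then follows from Theorem~\ref{thm1}(a), since $H(\phi_m(z_2))$ is a simple $\H$-module by Lemma~\ref{H-whittaker} together with $\phi_m(z_3)\ne0$, and $\H$ annihilates all of $W_{\phi'_m}^{\LL}$.

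Parts~(2)--(3) are then formal consequences. By the remark following Theorem~10, $H(\phi_m(z_2))$ is simple as an $\LL$-module, so Theorem~\ref{thm-10}(1) combined with part~(1) shows that $\widetilde W_{\phi_m}$ is simple iff $W_{\phi'_m}$ is simple as a $\Vir$-module; the Whittaker simplicity lemma for $\Vir$ from \cite{LGZ} (stated just after \eqref{whittaker-1}) translates this to $(\phi'_m(d_{2m-1}),\phi'_m(d_{2m}))\ne(0,0)$, and substituting the explicit expressions for $\phi'_m(d_{2m-1})$ and $\phi'_m(d_{2m})$ and multiplying by $\phi_m(z_3)$ (resp.\ by $2\phi_m(z_3)$) produces the two displayed inequalities. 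Part~(3) is immediate from Theorem~\ref{thm-10}(2) applied with $V=T_1$, $W=T_2$ (simple $\Vir$-modules as simple quotients of $W_{\phi'_m}$), $H=K=H(\phi_m(z_2))$, and $\lambda=\mu=\phi_m(z_2)$. The main technical hurdle throughout is the oscillator case analysis in the first paragraph, namely enumerating pairs $(a,b)\in\Z^2$ with $a+b=k$ for which $:I_aI_b:w\ne 0$, handling the normal-ordering cases, and verifying that the resulting scalar equals $\phi_m(d_k)-\phi'_m(d_k)$; once this computation and the Lie-homomorphism check for $\phi''_m$ are in hand, everything else is bookkeeping.
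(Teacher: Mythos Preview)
Your proof is correct and follows essentially the same route as the paper. The paper works with the subalgebra $L=\span\{d_{m+i},I_j,z_1,z_2,z_3\mid i\ge 0,\,j\in\Z\}$ (your $\mathfrak g$), checks that $H\cong H(\dot z_2)\otimes \C w_{\phi'_m}$ as $L$-modules via the same oscillator computation you spell out, and then applies Lemma~\ref{induced} and transitivity of induction; parts~(2)--(3) are likewise deduced from Theorem~\ref{thm-10}. Your presentation is slightly more explicit (you separate the universal-property construction of $\eta$ from the bijectivity argument and write out the Lie-homomorphism check for $\phi''_m$), but the ingredients and their assembly are the same.
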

 \begin{proof} From Lemma \ref{H-whittaker}, we know that $H$ is a simple  $\H$-module.

 (1)   Define $L=\span\{d_{m+i},I_j,z_1,z_2,z_3|i\in \Z_+,j\in \Z\}$.
 From simple computations we see that  $$H\cong \Ind_{\LL^{(0,m)}+\sum_{i=1}^3 \C z_i}^L \C w_{\phi_m}\cong H(\dot{z_2})
 \otimes \C w_{\phi'_m}$$ as $L$-modules, where the action of $L$ on $\C w_{\phi'_m}$ is given by
 $(\H+\C z_2+\C z_3) w_{\phi'_m}=0, xw_{\phi'_m}=\phi'_m(x)w_{\phi'_m}$ for all $x\in \V^{(m)}+\C z_1$. Therefore from Lemma
 \ref{induced}, we have $$\widetilde{W}_{\phi_m}\cong \Ind_L^{\LL}(\Ind_{\LL^{(0,m)}+\sum_{i=1}^3 \C z_i}^L \C
 w_{\phi_m})\cong \Ind_L^{\LL}( H(\dot{z_2})\otimes \C
 w_{\phi'_m})$$ $$\cong H(\dot{z_2})\otimes \Ind_L^{\LL} \C
 w_{\phi'_m})\cong H(\dot{z_2})\otimes W_{\phi'_m}^{\LL}.$$

Parts (2) and (3) follows from Theorem \ref{thm-10} and some easy
computations.
\end{proof}

Next we consider $\widetilde{W}_{\phi_m}$ for $\phi(z_3)=0$.

\begin{theorem}\label{thm-whittaker-2} Suppose that $m\ge 1$ and  the Lie algebra homomorphism
$\phi_m: \LL^{(0,m)}+\sum_{i=1}^3 \C z_i\rightarrow \C$ is  with
$\phi_m(z_3)=0$. Then the universal Whittaker module
$\widetilde{W}_{\phi_m}$ is simple if and only if $\phi_m(I_m)\ne
0$.
\end{theorem}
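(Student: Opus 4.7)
The plan is to prove both directions of the equivalence, exploiting the structural fact that $\phi_m(z_3)=0$ forces $[I_i,I_j]=0$ as operators on $\widetilde{W}_{\phi_m}$, so the Heisenberg subalgebra acts commutatively.  By PBW, $\widetilde{W}_{\phi_m}\cong U(\LL^-)w$ as a vector space, where $\LL^-=\span\{d_b,d_{-k},I_{-k}:0\le b\le m-1,\ k\ge 1\}$ is a chosen complement of $\LL^{(0,m)}+\sum_{i=1}^3\C z_i$ inside $\LL$.

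For sufficiency, assume $\phi_m(I_m)\ne 0$, and let $M$ be a nonzero submodule containing a nonzero $v$.  The three ``peeling'' identities
\begin{align*}
I_{m+a}\,d_{-a}\,w&=-(m+a)\phi_m(I_m)\,w,\\
(d_{m+c}-\phi_m(d_{m+c}))\,I_{-c}\,w&=-c\,\phi_m(I_m)\,w,\\
(I_{m-b}-\phi_m(I_{m-b}))\,d_b\,w&=-(m-b)\phi_m(I_m)\,w,
\end{align*}
valid for $a,c\ge 1$ and $0\le b\le m-1$, follow from the commutators $[I_{m+a},d_{-a}]=-(m+a)I_m$, $[d_{m+c},I_{-c}]=-cI_m$, and $[I_{m-b},d_b]=-(m-b)I_m$ (all central $z_2$-corrections vanish because the relevant $\delta$-condition reduces to $m=0$).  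Each identity produces a nonzero scalar multiple of $w$.  I would endow PBW monomials with a lexicographic order (first by the sorted tuple of $d_{-a}$-indices, then by the $I_{-c}$-indices, then by the $d_b$-indices), identify the maximal monomial of $v$, and apply the matched product of these peeling operators; the commutativity of the $I_j$'s keeps the bookkeeping clean, and iterated peeling produces a nonzero scalar multiple of $w$ inside $U(\LL)v$, so $M=\widetilde{W}_{\phi_m}$.

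For necessity, assume $\phi_m(I_m)=0$, so that $\phi_m(I_j)=0$ for every $j\ge m$.  By Schur's lemma the space of Whittaker vectors of a simple $\LL$-module is one-dimensional, so it suffices to exhibit a Whittaker vector $v\in\widetilde{W}_{\phi_m}\setminus\C w$.  My ansatz is $v=P(I_{-1},I_{-2},\ldots)\,w$ with $P$ a polynomial: the conditions $I_jv=\phi_m(I_j)v$ ($j\ge 0$) hold automatically by commutativity, while $d_nv=\phi_m(d_n)v$ ($n\ge m$) translate, via $[d_n,I_{-k}]=-kI_{n-k}+\delta_{n,k}(n^2+n)z_2$, into an infinite system of first-order linear PDEs $\mathcal{L}_nP=0$ in the variables $x_k:=I_{-k}$.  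Precisely because $\phi_m(I_m)=0$, the coefficient of $\partial_kP$ in $\mathcal{L}_n$ vanishes whenever $n-k\ge m$; this removes the principal symbol and makes the system lower-triangular, so $\mathcal{L}_{m+N}$ constrains only the partial derivatives $\partial_kP$ with $k\ge N+1$.  An iterative construction, adding corrections of increasing $I_{-k}$-degree, then produces a nontrivial polynomial solution $P$ and hence the desired extra Whittaker vector.

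The main obstacle is the necessity direction: carrying out the iterative construction of $P$ rigorously, in particular handling the multiplicative cross-terms $-kx_{k-n}\,\partial_kP$ (arising for $k>n$ from $[d_n,I_{-k}]$ when $n-k<0$) that entangle derivatives of different orders.  An induction on PBW degree with careful tracking of the cascading corrections is needed to ensure that a genuine polynomial (not merely a formal-power-series) solution exists.
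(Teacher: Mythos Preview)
Your sufficiency argument is a plausible sketch but differs from the paper's: instead of peeling, the paper first twists by an automorphism $\sigma_{\alpha,0}$ (choosing $\alpha\in t^{-1}\C[t^{-1}]$ to kill $\phi_m(d_k)$ for $k>m$), then shows the intermediate module $B=\Ind_{\LL^{(0,m)}}^{\LL^{(0,0)}}\C w_{\phi_m}$ is simple over $\LL^{(0,0)}$ with each $I_k$ acting injectively, and finally cites a theorem of Chen--Guo for the simplicity of $\Ind_{\LL^{(0,0)}}^{\LL}B$.  Your peeling route would still require commuting the operators $I_{m+a}$, $d_{m+c}$, $I_{m-b}$ past full PBW monomials and controlling the cascade under your lexicographic order; that is standard in spirit but not yet a proof.

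The real problem is your necessity argument: the ansatz $v=P(I_{-1},I_{-2},\dots)w$ provably fails for generic $\phi_m$, so no amount of ``careful tracking'' will close the gap.  Take $m=1$, so $\phi_1(I_j)=0$ for all $j\ge 1$.  Your operator $\mathcal L_n$ on $\C[x_1,\dots,x_N]$ (with $x_k=I_{-k}$) becomes $\mu_n\partial_n+\sum_{k>n}kx_{k-n}\partial_k$, where $\mu_n=n\phi_1(I_0)-(n^2+n)\phi_1(z_2)$.  In particular $\mathcal L_N P=\mu_N\partial_N P$, so $\mu_N\ne 0$ forces $\partial_NP=0$; downward induction on $N$ shows that whenever every $\mu_n\ne 0$ (e.g.\ when $\phi_1(z_2)=0\ne\phi_1(I_0)$, or when $\phi_1(I_0)/\phi_1(z_2)\notin\{2,3,4,\dots\}$) the only polynomial solution is a constant.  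Hence no extra Whittaker vector lives in $U(\H^-)w$, and your iterative construction cannot succeed.  The paper avoids this by enlarging the ansatz to include the $d_i$'s: with
\[
w'=\Bigl(\sum_{i=0}^{m-1}a_i d_i+\sum_{j=1}^{m+1}b_j I_{-j}+c\,I_{-1}^2\Bigr)w_{\phi_m},
\]
the Whittaker conditions reduce (using $\phi_m(z_3)=0$ and $\phi_m(I_m)=0$) to a homogeneous linear system of $2m+1$ equations in the $2m+2$ unknowns $a_0,\dots,a_{m-1},b_1,\dots,b_{m+1},c$, so a nonzero solution exists.  The resulting $w'\notin\C w_{\phi_m}$ then yields a proper submodule (equivalently: the universal property gives an endomorphism of $\widetilde W_{\phi_m}$ sending $w_{\phi_m}$ to $w'$, and if $\widetilde W_{\phi_m}$ were simple this endomorphism would be a scalar by Schur).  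For $m=1$ one checks directly that setting $a_0=0$ forces all remaining coefficients to vanish when $\mu_1,\mu_2\ne 0$, so the $d_i$ terms are genuinely indispensable.
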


\begin{proof} {\bf Case 1.} $\phi_m(I_m)\ne 0$.

 Since $\phi_m(d_{2m+j})=\phi_m(I_{m+j})=0$ for all $j\in \N, $
we may choose some $\a=\sum_{i=-m}^{-1}a_i t^i\in \C[t,t^{-1}]$
 such that $$0=\phi_m(d_n)+\phi_m(t^n\a), \forall n=m+1,m+2,\ldots,2m.$$
 Then $\widetilde{W}_{\phi_m}^{\sigma_{\a,0}}$ becomes a new Whittaker module with the new action
 $d_k\circ w_{\phi_m}=0$ for all $k>m$. Since $\phi(z_3)=0$,  the action of $\H$ on $\widetilde{W}_{\phi_m}$ is unchanged.
 Without lose of generality, we may assume that
 \begin{equation}\phi_m(d_k)=0,\forall k>m.\end{equation}
 Denote $B=\Ind_{\LL^{(0,m)}}^{\LL^{(0)}} \C w_{\phi_m}$. Then
 $\{d_{m-1}^{i_{m-1}}\cdots
 d_0^{i_0}w_{\phi_m}|(i_{m-1},\ldots, i_0)\in \Z_+^m\}$ is a basis
 of $B$. It is straightforward to check that $I_k$ acts injectively on $B$, and $B$ is simple as $\LL^{(0,0)}$-module.
 Now from Theorem 1 in \cite{CG} (for $d=0$ there), we see that $W_{\phi_m}\cong \Ind_{\LL^{(0,0)}+\sum_{i=1}^3 \C z_i}^{\LL} B$ is simple.

 {\bf Case 2.} $\phi_m(I_m)=0$.

Let
$$  w=(\sum_{i=0}^{m-1}a_i
d_i+\sum_{i=1}^{m+1}b_iI_{-i}+cI_{-1}I_{-1})w_{\phi_m}\in
\widetilde{W}_{\phi_m},$$
where $a_0,\ldots,a_{m-1},b_1,\ldots,b_{m+1},c\in\C$ will be determined.
We will make $w$ into a Whittaker vector. From $d_{m+i}w=\phi_m(d_{m+i})w$ and $I_{1+i}w=\phi_m(I_{1+i})w$ for $i=0, 1, ..., m$, we obtain the  following   homogenous linear
system with $2m+2$ variables
$a_0,\ldots,a_{m-1},b_1,\ldots,b_{m+1},c$ and $2m+1$ equations:
$$\aligned
                &\sum_{i=0}^{m-1}a_i\phi_m([I_k,d_i])=0,  k=1,\ldots,m-1; \\
                 & (-m-1)b_{m+1}+2c\phi_m([d_m,I_{-1}])=0,\\
                 &\sum_{i=0}^{m-1}a_i\phi_m([d_m,d_i])+\sum_{i=1}^m b_i\phi_m([d_m,I_{-i}])=0, \\
                & \sum_{i=0}^{m-1}a_i\phi_m([d_{m+l},d_i])+\sum_{i=1}^{m+1}b_i\phi_m([d_{m+l},I_{-i}])=0, l=1,2\ldots,m.
            \endaligned
$$

It is clear that the homogenous linear system has a nonzero solution since the number of equations is less than the number of variables.

Thus $w$ generates a nonzero proper  submodule of
$\widetilde{W}_{\phi_m}$. So we have proved that
$\widetilde{W}_{\phi_m}$ is not simple in this case.
 \end{proof}

Now we summarize the established results of this subsection into the following main
theorem.

\begin{theorem} Let  $m\in \N$ and
$\phi_m: \LL^{(0,m)}+\sum_{i=1}^3 \C z_i\rightarrow \C$ be a Lie algebra homomorphism.

 \begin{enumerate}
\item Suppose $\phi_m(z_3)\ne
0$. Then  the  Whittaker module $\widetilde{W}_{\phi_m}$ is  simple if and only if
$$2\phi_m(d_{2m})\phi_m(z_3)+\phi_m(I_{m})^2-2(m+1)\phi_m(I_{m})\phi_m(z_2)\ne0,\,\,{\text{or}}
$$
$$\phi_m(d_{2m-1})\phi_m(z_3)+\phi_m(I_{m})\phi_m(I_{m-1})-(m+1)\phi_m(I_{m})\phi_m(z_2)\ne0.
$$
\item  Suppose $\phi_m(z_3)=0$.  Then  the   Whittaker module $\widetilde{W}_{\phi_m}$ is  simple if and only if  $\phi_m(I_m)\ne
0$.
 \end{enumerate}
\end{theorem}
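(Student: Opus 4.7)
The plan is to recognize this final theorem as a direct summary of Theorems~17 and~18, so the only work is to assemble those two results and, in part~(1), to convert the Virasoro-level simplicity criterion into the explicit condition on $\phi_m$ displayed in the statement.

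For part~(1), I would start from Theorem~17(2), which reduces the simplicity of $\widetilde{W}_{\phi_m}$ to that of the universal Virasoro Whittaker module $W_{\phi'_m}$, and from Lemma~3, which (for $m\ge 1$) asserts that $W_{\phi'_m}$ is simple iff $(\phi'_m(d_{2m-1}),\phi'_m(d_{2m}))\ne(0,0)$. The remaining task is to expand $\phi'_m(d_{2m-1})$ and $\phi'_m(d_{2m})$ using the defining formula given just before Theorem~17. Exploiting the vanishing $\phi_m(I_j)=0$ for all $j>m$, the sum $\sum_{i=0}^{2m}\phi_m(I_i)\phi_m(I_{2m-i})$ collapses to the single surviving term $\phi_m(I_m)^2$ (only $i=m$ satisfies both $i\le m$ and $2m-i\le m$), while $\sum_{i=0}^{2m-1}\phi_m(I_i)\phi_m(I_{2m-1-i})$ collapses to $2\phi_m(I_m)\phi_m(I_{m-1})$ (only $i=m-1,m$ survive). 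Clearing the denominator $2\phi_m(z_3)$ in the defining formula for $\phi'_m(d_k)$ then rewrites the two inequalities $\phi'_m(d_{2m})\ne 0$ and $\phi'_m(d_{2m-1})\ne 0$ in precisely the form displayed in part~(1).

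For part~(2), the assertion is literally the content of Theorem~18, whose proof has already been carried out: when $\phi_m(I_m)\ne 0$ one normalizes $\phi_m(d_k)=0$ for $k>m$ via an automorphism $\sigma_{\a,0}$ (which does not disturb the $\H$-action since $\phi_m(z_3)=0$) and then invokes the simplicity criterion of Chen--Guo for the appropriate induced module; when $\phi_m(I_m)=0$ one produces a nontrivial Whittaker vector inside $\widetilde{W}_{\phi_m}$ as a solution of a homogeneous linear system with strictly more variables than equations, ruling out simplicity.

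Concatenating (1) and (2) gives the theorem. There is no real obstacle: the only step requiring care is the bookkeeping in the index collapse in part~(1), which is a routine consequence of $\phi_m(I_j)=0$ for $j>m$; no new representation-theoretic input is needed beyond Theorems~17 and~18 and Lemma~3.
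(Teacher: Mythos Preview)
Your proposal is correct and mirrors the paper's approach exactly: the paper presents this theorem as a direct summary of the two preceding results (labeled \ref{thm-whittaker-1} and \ref{thm-whittaker-2} there; your ``Theorem~17/18'' numbering is off, but the references are unambiguous), with the explicit inequalities in part~(1) obtained by the same index collapse in the formula for $\phi'_m(d_k)$ that you spell out. No additional argument is needed beyond assembling those two theorems and Lemma~3.
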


\section{Tensor products of  highest weight modules and
modules of intermediate series}

In this section, we will determine the necessary and sufficient
conditions for the tensor products of  highest weight modules and
modules of intermediate series over $\LL$ to be simple.

Let $(\dot{I_0},\dot{d_0},\dot{z_1},\dot{z_2},\dot{z_3})\in \C^5$,
$\mathcal{I}$ be the left ideal in $U(\LL)$ generated by $\{d_n,I_n,
d_0-\dot{d_0},I_0-\dot{I_0},
z_1-\dot{z_1},z_2-\dot{z_2},z_2-\dot{z_3}| n\in \N\}$. Then
$M(\dot{I_0},\dot{d_0},\dot{z_1},\dot{z_2},\dot{z_3})=U(H)/\mathcal{I}$
is a Verma module which is a free $U(\LL^-)$ module generated by the
highest weight vector $w=1+\mathcal{I}$. It has a unique maximal
submodule $J(\dot{I_0},\dot{d_0},\dot{z_1},\dot{z_2},\dot{z_3})$,
and the quotient module
$V(\dot{I_0},\dot{d_0},\dot{z_1},\dot{z_2},\dot{z_3})
=M(\dot{I_0},\dot{d_0},\dot{z_1},\dot{z_2},\dot{z_3})/J(\dot{I_0},\dot{d_0},\dot{z_1},\dot{z_2},\dot{z_3})$
  is simple. We will denote the image of $w$ by $\bar w$. These modules are studied in \cite{ACKP, B}.

Now we recall the modules of intermediate series from \cite{LZ1}.
For any $a,b,F\in \C$, $A(a,b;F)=\C[x,x^{-1}]$ is a $\LL$-module
with the action

\begin{equation}z_1=z_2=z_3=0,\end{equation}
\begin{equation}d_n x^m=(\a+m+nb)x^{m+1},\end{equation}
\begin{equation}I_n x^m=Fx^{m+n},\forall m,n\in \Z.\end{equation}

It is well known that $A(a,b;F)$ is not simple if and only if $a\in
\Z, b\in \{0,1\}$ and $F=0$. Denote by $A'(a,b;F)$ the unique
nontrivial simple sub-quotient of $A(a,b;F)$. Recall that
$A'(a,b,F)\cong A'(0,0,0)$ if $A(a,b;F)$ is not simple.

It has been shown in \cite{LZ1} that an simple $\LL$-module with
finite-dimensional weight spaces is either a highest (or lowest)
weight module, or isomorphic to some $A'(a,b;F)$.

\begin{lemma}\label{0-0-0} (1)  Suppose $\dot{z_3}\ne0$. Then
$V(\dot{I_0},\dot{d_0},\dot{z_1},\dot{z_2},\dot{z_3})\otimes
A'(a,b;0)$ is simple if and only if
$V(\dot{d_0}+\frac{1}{2\dot{z_3}}\dot{I_0}^2-\frac{\dot{z_2}}{\dot{z_3}}\dot{I_0},\dot{z_1}-1
+\frac{12\dot{z_2}^2}{\dot{z_3}})\otimes A'(a,b)$ is a simple
$\Vir$-module (which is determined in \cite{CGZ}).

(2) If $F\ne 0$, then $A(a,b;F)\otimes V(\dot{d_0},\dot{z_1})^\LL$
is simple.

\end{lemma}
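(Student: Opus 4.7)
The plan is to reduce each part to earlier results.

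\emph{Part (1).} Since $\dot{z_3}\ne 0$, the highest weight module $V:=V(\dot{I_0},\dot{d_0},\dot{z_1},\dot{z_2},\dot{z_3})$ is simple, belongs to $\mathcal{O}_\LL$, and contains the simple $\H$-submodule $U(\H)\bar w$ (the Heisenberg Fock with $I_n\bar w=0$ for $n\ge 1$, which is simple since $\dot{z_3}\ne 0$). The structure theorem from Section 3.3 therefore gives $V\cong H(\dot{z_2})\otimes U^\LL$ for some simple $U\in\mathcal{O}_\Vir$. Evaluating (\ref{dk}) on the highest weight vector $v_H$ of the Fock yields $d_0 v_H=\bigl(-\frac{\dot{I_0}^2}{2\dot{z_3}}+\frac{\dot{z_2}\dot{I_0}}{\dot{z_3}}\bigr)v_H$ and $d_n v_H=0$ for $n\ge 1$, while $z_1$ acts on $H(\dot{z_2})$ as the scalar $1-\frac{12\dot{z_2}^2}{\dot{z_3}}$; comparing with the action on $\bar w$ identifies $U$ as the Virasoro highest weight module $V\bigl(\dot{d_0}+\frac{\dot{I_0}^2}{2\dot{z_3}}-\frac{\dot{z_2}\dot{I_0}}{\dot{z_3}},\ \dot{z_1}-1+\frac{12\dot{z_2}^2}{\dot{z_3}}\bigr)$. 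Since $F=0$ forces $\H+\C z_2$ to act trivially on $A'(a,b;0)$, we have $A'(a,b;0)\cong A'(a,b)^\LL$, so
\[V\otimes A'(a,b;0)\cong H(\dot{z_2})\otimes\bigl(U\otimes A'(a,b)\bigr)^\LL,\]
and Theorem \ref{thm-10} closes the loop: the left-hand side is simple iff $U\otimes A'(a,b)$ is simple as a $\Vir$-module.

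\emph{Part (2).} I would apply Theorem \ref{thm1} to $V_1=A(a,b;F)$ (simple because $F\ne 0$) and $V_2=V(\dot{d_0},\dot{z_1})^\LL$, verifying its Condition (2): for every finite $S\subset V_2$, the module $V_1$ is simple as an $\ann_\LL(S)$-module. Triviality of $\H+\C z_2$ on $V_2$ gives $\H+\C z_2\subseteq\ann_\LL(S)$; and since $V_2$ is a Virasoro highest weight module, each $s\in S$ is killed by $d_m$ for $m$ large enough, so some $N\in\N$ also yields $\V^{(N+1)}\subseteq\ann_\LL(S)$. For any nonzero $p(x)\in A(a,b;F)=\C[x,x^{-1}]$, the $\H$-action produces the principal ideal $(p)=p\cdot\C[x,x^{-1}]$. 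The identity $d_m p(x)=(a+mb)x^m p(x)+x^{m+1}p'(x)$ for $m\ge N+1$ places $x^{m+1}p'(x)$ in $U(\H+\V^{(N+1)})\cdot p(x)$ modulo $(p)$, and further action of the $I_n$'s delivers $x^k p'(x)$ for every $k\in\Z$. Iterating the same derivation trick on $x^k p'(x),\ x^k p''(x),\ldots$ yields $x^k p^{(j)}(x)$ for all $j\ge 0,\ k\in\Z$. Because $p$ has only finitely many roots in $\C^*$ of bounded multiplicity, $\gcd(p,p',p'',\ldots)$ becomes a unit in $\C[x,x^{-1}]$ after finitely many derivatives, so $\sum_{j\ge 0}(p^{(j)})=\C[x,x^{-1}]$. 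Hence $U(\ann_\LL(S))\cdot p(x)=A(a,b;F)$, Condition (2) holds, and Theorem \ref{thm1} delivers simplicity of $V_1\otimes V_2$.

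The main obstacle is the final commutative-algebra step of part (2): showing that the shifts of $p,p',p'',\ldots$ generate the unit ideal of $\C[x,x^{-1}]$. Everything else is bookkeeping via Theorems \ref{thm1} and \ref{thm-10} and the structure theorem for $\mathcal{O}_\LL$.
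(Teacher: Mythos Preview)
Your proof is correct and follows essentially the same route as the paper: Part~(1) via the tensor decomposition $V\cong H(\dot z_2)\otimes U^{\LL}$ together with Theorem~\ref{thm-10}, and Part~(2) via Theorem~\ref{thm1} applied to $V_1=A(a,b;F)$ with the observation that $\Vir^{(r)}+\H+\C z_2\subseteq\ann_\LL(S)$. The only difference is that the paper simply \emph{asserts} that $A(a,b;F)$ is simple as a $\Vir^{(r)}+\H+\C z_2$-module, whereas you supply the explicit derivative/ideal argument; your commutative-algebra step is fine (no nonzero Laurent polynomial has all derivatives vanishing at a point of $\C^*$), so there is no gap.
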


\begin{proof} (1). Denote by $H$ the highest weight $\H$-module with $I_0=\dot{I_0}$ and $z_3=\dot{z_3}$.
From Theorem \ref{thm-whittaker-1} or \cite{ACKP} we know that
$$V(\dot{I_0},\dot{d_0},\dot{z_1},\dot{z_2},\dot{z_3})\cong
H(\dot{z_2})\otimes
V(\dot{d_0}+\frac{1}{2\dot{z_3}}\dot{I_0}^2-\frac{\dot{z_2}}{\dot{z_3}}\dot{I_0},\dot{z_1}-1
+\frac{12\dot{z_2}^2}{\dot{z_3}})^\LL,$$
$$\hskip -5cmV(\dot{I_0},\dot{d_0},\dot{z_1},\dot{z_2},\dot{z_3})\otimes
A'(a,b;0) $$ $$\hskip 2cm \cong H(\dot{z_2})\otimes
(V(\dot{d_0}+\frac{1}{2\dot{z_3}}\dot{I_0}^2-\frac{\dot{z_2}}{\dot{z_3}}
\dot{I_0},\dot{z_1}-1+\frac{12\dot{z_2}^2}{\dot{z_3}})\otimes
A'(a,b))^{\LL}.$$ Therefore the result follows from Theorem
\ref{thm-10}.

(2) Note that for any finite subset $S\subset
V(\dot{d_0},\dot{z_1})$, there exists some $r$ such that
$\Vir^{(r)}+\H+\C z_2\subset \ann_{\LL}(S)$. However $A(a,b;F)$ is
simple as $\Vir^{(r)}+\H+\C z_2$ module. Thus the result follows
from Theorem \ref{thm1}.
\end{proof}

\begin{lemma}\label{Verma-nonsimple} The module $M(\dot{I_0},\dot{d_0},\dot{z_1},\dot{z_2},\dot{z_3})\otimes A'(a,b;F)$ is not simple.\end{lemma}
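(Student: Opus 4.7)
The strategy is to realize $M \otimes A'(a,b;F)$ as an induced module via Lemma \ref{induced} and exhibit a proper submodule already at the base.

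Set $\mathfrak{p} := \aa + \sum_{i=1}^{3}\C z_i$, a Lie subalgebra of $\LL$. By the very definition of the Verma module, $M \cong \Ind_{\mathfrak{p}}^{\LL}(\C w)$, where $\C w$ denotes the one-dimensional $\mathfrak{p}$-module on which $d_n, I_n$ act as $0$ for $n > 0$ and $d_0, I_0, z_i$ act by the scalars $\dot d_0, \dot I_0, \dot z_i$. Taking $W = A'(a,b;F)$ (an $\LL$-module) and $B = \C w$ (a $\mathfrak{p}$-module) in Lemma \ref{induced} produces a natural $\LL$-module isomorphism
$$M \otimes A'(a,b;F) \;\cong\; \Ind_{\mathfrak{p}}^{\LL}\bigl(A'(a,b;F) \otimes \C w\bigr).$$

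Next, I would exhibit a proper nonzero $\mathfrak{p}$-submodule of $A'(a,b;F) \otimes \C w$. For each integer $k$, let $W_k := \sum_{m \ge k}\C x^m \subseteq A'(a,b;F)$. Every non-central generator of $\mathfrak{p}$ has the form $d_n$ or $I_n$ with $n \ge 0$ and sends $x^m$ into $\C x^{m+n}$, so $W_k$ is $\mathfrak{p}$-stable; hence $W_k \otimes \C w$ is a $\mathfrak{p}$-submodule of $A'(a,b;F) \otimes \C w$. For $k = 1$ this submodule is proper (since $A'(a,b;F)$ always contains a nonzero $x^m$ with $m \le 0$, including in the degenerate cases when $A(a,b;F)$ itself fails to be simple) and nonzero (since it contains nonzero $x^m$ with $m \ge 1$).

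Finally, because $U(\LL)$ is free as a right $U(\mathfrak{p})$-module by the PBW decomposition $\LL = \LL^- \oplus \mathfrak{p}$, the induction functor $\Ind_{\mathfrak{p}}^{\LL}$ is exact. Consequently the inclusion $W_1 \otimes \C w \hookrightarrow A'(a,b;F) \otimes \C w$ induces an $\LL$-module embedding
$$\Ind_{\mathfrak{p}}^{\LL}(W_1 \otimes \C w) \;\hookrightarrow\; M \otimes A'(a,b;F),$$
whose image is nonzero (as $W_1 \ne 0$) and proper (as the induced quotient $\Ind_{\mathfrak{p}}^{\LL}((A'(a,b;F)/W_1) \otimes \C w)$ is nonzero). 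This yields a proper nonzero $\LL$-submodule of $M \otimes A'(a,b;F)$. I foresee no substantial obstacle; the decisive observation is that $A'(a,b;F)$ carries the obvious ``upper-triangular'' filtration by the subspaces $W_k$, which is invisible to the full algebra $\LL$ but fully visible to its positive parabolic $\mathfrak{p}$, and Lemma \ref{induced} lets this filtration be inherited by the tensor product $M \otimes A'(a,b;F)$.
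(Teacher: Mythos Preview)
Your proof is correct and takes a genuinely different route from the paper's. The paper argues directly: it picks $x^k,x^{k+1}\ne 0$ in $A'(a,b;F)$ and shows by a PBW leading-term argument that $w\otimes x^k\notin U(\LL)(w\otimes x^{k+1})$, since any expression $\sum_{i=1}^r u_{-i}(w\otimes x^{k+i})$ with $u_{-i}\in U(\LL^-)_{-i}$ has an uncancellable term $u_{-r}w\otimes x^{k+r}$. You instead invoke Lemma~\ref{induced} to identify $M\otimes A'(a,b;F)$ with $\Ind_{\mathfrak{p}}^{\LL}(A'(a,b;F)\otimes\C w)$ and then use exactness of induction (freeness of $U(\LL)$ over $U(\mathfrak{p})$) to lift the proper $\mathfrak{p}$-submodule $W_1\otimes\C w$ to a proper $\LL$-submodule. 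Both arguments ultimately rest on the same observation---the filtration of $A'(a,b;F)$ by the $W_k$'s is $\mathfrak{p}$-stable---but your packaging is more conceptual and avoids any explicit PBW bookkeeping, at the price of appealing to Lemma~\ref{induced} and exactness. The paper's version is more elementary and self-contained. Note that your submodule $\Ind_{\mathfrak{p}}^{\LL}(W_{k+1}\otimes\C w)$ actually contains the paper's submodule $U(\LL)(w\otimes x^{k+1})$, so the two constructions are compatible.
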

\begin{proof}Let $w$ be the highest weight vector of $M(\dot{I_0},\dot{d_0},\dot{z_1},\dot{z_2},\dot{z_3})$.
Suppose that $x^{k},x^{k+1}\ne 0$ in $A'(a,b;F)$. The lemma is clear
from the following claim.

{\bf Claim.} We have $w\otimes x^{k}\notin U(\LL)(w\otimes
x^{k+1})$.

Note that  $$U(\LL)(w\otimes x^{k+1})=U(\LL^-)U(\LL^+)(w\otimes
x^{k+1})\subset \sum_{i\in \Z_+}U(\LL^-)(w\otimes x^{k+1+i}).$$
Using the PBW Basis of $U(\LL^-)$, it is easy to verify that
$w\otimes x^k$ can not be written as $\sum_{i=1}^r u_{-i}(w\otimes
x^{k+i})$, where $u_{-i}\in U(\LL^-)_{-i}$ with $u_{-r}\ne0$, since
the term $u_{-r}w\otimes x^{k+r}$ in the expression of $\sum_{i=1}^r
u_{-i}(w\otimes x^{k+i})$ cannot be canceled.
\end{proof}

\begin{corollary}\label{simplicity-00}If $\dot{I_0}\ne 0$, then $V(\dot{I_0},\dot{d_0},\dot{z_1},0,0)\otimes A'(a,b;F)$ is not simple.\end{corollary}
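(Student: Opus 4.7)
My plan is to reduce the corollary to Lemma \ref{Verma-nonsimple} by showing that when $\dot{I_0}\ne 0$ the simple highest weight module $V:=V(\dot{I_0},\dot{d_0},\dot{z_1},0,0)$ coincides with the Verma module $M:=M(\dot{I_0},\dot{d_0},\dot{z_1},0,0)$. Indeed, once $V\cong M$ is known, the tensor product $V\otimes A'(a,b;F)=M\otimes A'(a,b;F)$ contains the proper submodule produced in Lemma \ref{Verma-nonsimple}, so it is not simple.

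It therefore suffices to verify that $M$ has no nonzero singular vector at any positive level when $\dot{I_0}\ne 0$. The key structural point is that the hypothesis $\dot{z_3}=0$ forces the Heisenberg subalgebra $\H$ to be abelian: in the Shapovalov form on $M$ with highest weight vector $w$, pairs of monomials built only from $I$-generators pair to zero, while the fundamental $d$--$I$ cross pairings $\langle d_{-j}w,I_{-j}w\rangle=-j\dot{I_0}$ are nonzero precisely because $\dot{I_0}\ne 0$. At level one the Shapovalov matrix in the basis $\{d_{-1}w,I_{-1}w\}$ is $\left(\begin{smallmatrix}-2\dot{d_0}&-\dot{I_0}\\-\dot{I_0}&0\end{smallmatrix}\right)$, with determinant $-\dot{I_0}^2\ne 0$; a direct calculation at level two in the PBW basis $\{d_{-2},d_{-1}^2,I_{-2},I_{-1}^2,d_{-1}I_{-1}\}w$ yields determinant $16\dot{I_0}^8$, independent of $\dot{d_0}$ and $\dot{z_1}$. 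The same structural mechanism --- block decomposition of the PBW basis by the number of $d$- versus $I$-generators, together with the $-j\dot{I_0}$ cross pairings --- produces a positive power of $\dot{I_0}$ as the Shapovalov determinant at every level $n$.

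Hence $M$ is simple whenever $\dot{I_0}\ne 0$, so $V\cong M$, and the corollary follows directly from Lemma \ref{Verma-nonsimple}. The main technical obstacle is organizing the inductive Shapovalov determinant computation cleanly for all levels at once; however, the essential algebraic inputs --- abelianness of $\H$ plus the non-vanishing $-j\dot{I_0}$ cross pairings --- are already transparent at levels one and two and extend without essential additional difficulty.
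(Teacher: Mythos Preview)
Your reduction is exactly the paper's: show that $M(\dot I_0,\dot d_0,\dot z_1,0,0)$ is simple (hence $V=M$) and then invoke Lemma \ref{Verma-nonsimple}. The paper simply cites Theorem~1 of \cite{CG} for the simplicity of $M$ when $\dot I_0\ne 0$ (cf.\ also the remark following Theorem \ref{billig}), whereas you sketch a direct Shapovalov--determinant argument. Your level-$1$ and level-$2$ computations are correct (the level-$2$ determinant really is $16\dot I_0^{8}$, independent of $\dot d_0,\dot z_1$), but the passage to arbitrary level is only asserted, not proved; the ``block decomposition by number of $d$- versus $I$-generators'' is the right intuition, yet turning it into a clean induction that shows the determinant is a nonzero scalar times a power of $\dot I_0$ at every level requires a genuine argument, so either carry that out in full or just quote \cite{CG} as the paper does.
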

\begin{proof}From Theorem 1 in \cite{CG}, we have
$$V(\dot{I_0},\dot{d_0},\dot{z_1},0,0)=M(\dot{I_0},\dot{d_0},\dot{z_1},0,0)$$
 if $\dot{I_0}\ne 0$. The result follows from Lemma \ref{Verma-nonsimple}.\end{proof}

\begin{lemma}\label{lemma-19}Suppose that $F\ne 0$. Then any nonzero submodule $M$ of $V(\dot{I_0},\dot{d_0},\dot{z_1},
\dot{z_2}, \dot{z_3})\otimes A(a,b;F)$ contains $\bar{w}\otimes
x^{k}$ for some $k$.\end{lemma}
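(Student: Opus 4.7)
The plan is to take a nonzero $y\in M$, normalize it via the weight and grading structure, and then iteratively strip off $V_{\geq 1}$-contributions using the fact that the positive Heisenberg operators $I_i$ act on $A(a,b;F)$ as pure shifts whenever $i$ exceeds the top $V$-level appearing in $y$.

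Since $d_0$ acts semisimply on $V\otimes A(a,b;F)$, I first reduce to the case where $y$ is a $d_0$-weight vector. The $\Z_+$-grading $V=\bigoplus_{n\geq0}V_n$ (with $V_0=\C\bar w$) together with $A=\bigoplus_m\C x^m$ then forces $y=\sum_{n=0}^N v^{(n)}\otimes x^{k+n}$ for some $k\in\Z$ and $N\in\Z_+$, with $v^{(n)}\in V_n$ and $v^{(N)}\neq0$. Using that $V$ is a simple highest weight module (so for any $0\neq v\in V_n$ there exists $u\in U(\LL^+)$ of weight $n$ with $uv=c\bar w\neq0$), I may arrange $v^{(0)}=\bar w$ by applying such a $u$ to $y$ if necessary.

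The key observation is that for $i>N$, $I_iv^{(n)}\in V_{n-i}=0$ for all $n\leq N$, whence $I_iy=F\tau_iy$, where $\tau_i$ denotes the formal shift $x^m\mapsto x^{m+i}$ on the $A$-factor. In particular $\tau_iy\in M$ for every $i>N$. For any $j\geq 1$ and $i>N$, a direct computation then gives
\[
I_j\tau_iy-F\tau_{i+j}y=\sum_{n=j}^N I_jv^{(n)}\otimes x^{k+n+i}\in M.
\]
Reindexing by $l=n-j$ and setting $k'=k+j+i$, this element becomes $I_jv^{(j)}\otimes x^{k'}+\sum_{l=1}^{N-j}I_jv^{(l+j)}\otimes x^{k'+l}$, whose maximum $V$-level is $N-j<N$. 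If $I_jv^{(j)}$ is a nonzero scalar multiple of $\bar w$ for some $j\in\{1,\dots,N\}$ with $v^{(j)}\neq0$, this new element has the same normalized form as $y$ but with strictly smaller $N$; an induction on $N$, whose base case $N=0$ produces $\bar w\otimes x^k\in M$ directly, will then finish the argument.

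The main obstacle will be the degenerate case in which $I_jv^{(j)}=0$ for every $j\in\{1,\dots,N\}$ with $v^{(j)}\neq0$. In that case no pure $I_j$ yields a clean reduction, and one must use a general weight-$j$ operator $u\in U(\LL^+)_j$ with $uv^{(j)}=c\bar w\neq 0$; such $u$ necessarily involves some $d_n$, whose position-dependent action $d_nx^p=(a+p+nb)x^{p+n}$ on $A$ prevents the level-$N$ residue in $u\tau_iy-\lambda_i\tau_{i+j}y$ from cancelling. To circumvent this, I would exploit the affine dependence on $i$ of the obstructing coefficient and take suitable linear combinations over varying $i>N$ to separate and eliminate the offending level-$N$ contributions, thereby restoring the inductive level-reduction and completing the proof.
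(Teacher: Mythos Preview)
Your level-reduction strategy differs from the paper's and carries a real gap in the degenerate case. The situation where $I_jv^{(j)}=0$ for all relevant $j$ does occur (e.g.\ $N=1$, $v^{(1)}=I_{-1}\bar w$ when $\dot z_3=0$), and your proposed cure via ``affine dependence on $i$'' is both imprecise (for a monomial in $U(\LL^+)$ containing several $d$'s the dependence on $i$ is polynomial of higher degree) and not actually carried out. Worse, varying $i$ need not help at all: in the example just mentioned one computes
\[
d_1\tau_iy-(a+k+1+i+b)\,\tau_{i+1}y=(-1-\dot I_0+2\dot z_2)\,\bar w\otimes x^{k+i+1},
\]
whose coefficient is independent of $i$ and can vanish. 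The gap is, however, easily closed by a different choice of operator: for any $j>N$ one has
\[
d_j\tau_iy-(a+k+N+i+jb)\,\tau_{i+j}y=\sum_{n=0}^{N-1}(n-N)\,v^{(n)}\otimes x^{k+n+i+j}\in M,
\]
which has top level at most $N-1$ and level-$0$ part $-N\bar w\neq0$. This reduces $N$ unconditionally and makes your induction go through with no degenerate case whatsoever.

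The paper's proof is shorter and avoids induction. Choosing $n$ with $d_jv^{(m)}=I_jv^{(m)}=0$ for all $j\geq n$ and all $m$, it notes that $A(a,b;F)$ is already simple over $\V^{(n)}+\H+\C z_2$ (since $F\neq0$), so by the density lemma some $u$ in that enveloping algebra satisfies $ux^{k+i}=\delta_{i,0}x^0$. The identity $I_rI_sX=FI_{r+s}X$ valid on $A$ then converts $I_lu$ (for $l$ large) into an operator $u'\in U(\LL^{(n,n)})$ with $u'x^{k+i}=F\delta_{i,0}x^l$; since every generator of $\LL^{(n,n)}$ kills each $v^{(m)}$, the operator $u'$ acts on the tensor only through the $A$-factor, giving $u'y=F\bar w\otimes x^l\in M$ in one step.
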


\begin{proof} Take a nonzero $ \b=\sum_{i=0}^s
v_{-i}\otimes x^{k+i}\in M$, where $v_{-i}\in U(\LL^-)_{-i}\bar{w}$.
Replacing $\b$ with  $u\b$ for some $u\in U(\LL^+)$ if necessary, we
may assume that $v_{0}=\bar{w}$. Choose $n$ such that
$d_jv_{-i}=I_jv_{-i}=0,\forall j\ge n, i=1,2,\ldots,s$. Note that
$A(a,b;F)$ is simple as $\V^{(n)}+\H+\C z_2$ module. Therefore from
Lemma \ref{density}, we may choose some $u\in U(\V^{(n)}+\H+\C z_2)$
with $ux^{k+i}=\delta_{0,i}x^{0}$ for all $j=1,2,\ldots,s.$ Rewrite
$u=\sum_i u_iu_i'$ with $u_i\in U(\H)$ and $u_i'\in U(\V^{(n)})$.
Note that
$$I_jI_i X=FI_{i+j} X,\,\,\forall
i,j\in \Z, X\in A(a,b;F).$$  For sufficient large $l$, replacing
$I_jI_i$ with $FI_{i+j}$ in $I_lu$,  we obtain $u'\in
U(\LL^{(n,n)})$ with $u'x^{k+i}=t^lux^{k+i}=F \delta_{0,i}x^{l},
\forall i=0,1,\ldots,s$. Now $0\ne u'\b=F \bar{w}\otimes x^{l}\in
M$.
\end{proof}

Now we use the ``shifting technique". We will denote $v_k\otimes
x^{i}$ as $v_k\otimes y^{i+k}$ for all $i,k\in \Z$, where $v_k\in
V(\dot{I_0},\dot{d_0},\dot{z_1}, \dot{z_2}, \dot{z_3})$ with $d_0
v_k=(\dot{d_0}+k)v_k$. Then
$$M(\dot{I_0},\dot{d_0},\dot{z_1},\dot{z_2}, \dot{z_3})\otimes
A(a,b;F) =M(\dot{I_0},\dot{d_0},\dot{z_1},\dot{z_2},
\dot{z_3})\otimes \C[y,y^{-1}]$$ with the actions

\begin{equation}\label{shift-1}d_n(u_k w\otimes y^{i})=((d_n-k+a+i+nb)u_k w)\otimes
y^{n+i},\end{equation}

\begin{equation}\label{shift-2}I_n(u_k w\otimes y^i)=((I_n+F)u_k w)\otimes y^{i+n},\end{equation}
for all $u_k\in U(\LL)_k$.
For simplicity we define
 $$W^{(k)}=\sum_{i\in \Z_+}U(\LL)(w\otimes
y^{k+i})\subset M(\dot{I_0},\dot{d_0},\dot{z_1},\dot{z_2},
\dot{z_3})\otimes A(a,b;F),$$
$$W_n^{(k)}=W^{(k)}\cap(
M(\dot{I_0},\dot{d_0},\dot{z_1},\dot{z_2}, \dot{z_3})\otimes y^n),
\forall n\in \Z.$$

\begin{lemma}\label{lemma-20}
\begin{enumerate}\item $W^{(k)}=\sum_{i\in \Z_+} U(\LL^-)(w\otimes y^{k+i})$.
\item  $W^{(k)}\supset \oplus_{i\ge k}
M(\dot{I_0},\dot{d_0},\dot{z_1},\dot{z_2}, \dot{z_3})\otimes y^{i}$.
\item $M(\dot{I_0},\dot{d_0},\dot{z_1},\dot{z_2}, \dot{z_3})\otimes
y^{k-1}=W^{(k)}_{k-1}\oplus \C(w\otimes y^{k-1})$.
\item  Suppose that $P$ is a weight vector in $U(\LL^-)$ such that $$P
w\otimes y^{k-1}\in W^{(k)}_{k-1}\subset
M(\dot{I_0},\dot{d_0},\dot{z_1},\dot{z_2}, \dot{z_3})\otimes
A(a,b;F),$$ then $(U(\LL^-)P w)\otimes y^{k-1} \subset
W^{(k)}_{k-1}$.
\end{enumerate}
\end{lemma}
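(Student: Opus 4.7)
All four parts follow from one Leibniz-type identity in the twisted action \eqref{shift-1}--\eqref{shift-2}: if $u=g_{-n_1}\cdots g_{-n_p}\in U(\LL^-)_{-\ell}$ is a PBW monomial (each $g_{-n_i}\in\{d_{-n_i},I_{-n_i}\}$) and $vw\otimes y^j$ is any weight vector, then expanding $u(vw\otimes y^j)$ by Leibniz produces one summand for each subset $T\subseteq\{1,\ldots,p\}$ of generators acting on the first tensor factor. A short bookkeeping shows (i) every summand lies in the single graded piece $M\otimes y^{j-\ell}$, (ii) the leading summand $T=\{1,\ldots,p\}$ is $uvw\otimes y^{j-\ell}$, and (iii) every other summand has the form $\mu_T\,u_Tvw\otimes y^{j-\ell}$ with $u_T=\prod_{i\in T}g_{-n_i}\in U(\LL^-)_{-\ell'}$ for some $\ell'<\ell$. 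Part (1) is then immediate from PBW together with $\LL^+w=0$ and $\LL^0w\subseteq\C w$, which give $U(\LL^0)U(\LL^+)(w\otimes y^{k+i})\subseteq\sum_{i'\ge i}\C(w\otimes y^{k+i'})$. Part (2) follows by induction on $m\ge 0$: for any PBW monomial $u\in U(\LL^-)_{-m}$, applying $u$ to $w\otimes y^{j+m}\in W^{(k)}$ and using the Leibniz identity puts $uw\otimes y^j$ into $W^{(k)}$ once the subleading terms have been absorbed by the inductive hypothesis.

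The spanning half of Part (3), namely $M\otimes y^{k-1}\subseteq W^{(k)}_{k-1}+\C w\otimes y^{k-1}$, is the same Leibniz induction applied at shift $m\ge 1$: for $u\in U(\LL^-)_{-m}$ the inequality $k-1+m\ge k$ gives $u(w\otimes y^{k-1+m})\in W^{(k)}_{k-1}$, and induction on $m$ covers the subleading pieces modulo $\C w\otimes y^{k-1}$. The hard part---and the main obstacle of the lemma---is the complementary non-containment $w\otimes y^{k-1}\notin W^{(k)}$, which refines Lemma~\ref{Verma-nonsimple}. Using Part (1), I suppose for contradiction that $w\otimes y^{k-1}=\sum_{i=0}^r u_i(w\otimes y^{k+i})$ with $u_i\in U(\LL^-)_{-1-i}$ and $u_r\ne 0$. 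Reverting to the $x$-basis and Leibniz-expanding each $u_i(w\otimes x^{k+i})$, the monomial $x^{k+r}$ appears only inside $u_r(w\otimes x^{k+r})$, and only in its $T=\{1,\ldots,p\}$ piece, contributing $u_rw\otimes x^{k+r}$. Since the left-hand side $w\otimes x^{k-1}$ has zero $x^{k+r}$-component, $u_rw=0$; freeness of $M$ as a $U(\LL^-)$-module then forces $u_r=0$, a contradiction.

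For Part (4), I induct on $\ell$, where $u\in U(\LL^-)_{-\ell}$; the base $\ell=0$ is exactly the hypothesis $Pw\otimes y^{k-1}\in W^{(k)}_{k-1}$. For the inductive step, Part (2) gives $Pw\otimes y^{k-1+\ell}\in W^{(k)}$ (since $k-1+\ell\ge k$), hence $u(Pw\otimes y^{k-1+\ell})\in W^{(k)}_{k-1}$. Leibniz-expanding this vector writes it as $uPw\otimes y^{k-1}$ plus scalar multiples of $u_TPw\otimes y^{k-1}$ with each $u_T\in U(\LL^-)_{-\ell'}$ for some $\ell'<\ell$; the inductive hypothesis, applied with the same $P$ and the smaller-weight $u_T$ in place of $u$, places every subleading term in $W^{(k)}_{k-1}$, hence so is $uPw\otimes y^{k-1}$. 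Linearity in $u$ then extends this to $(U(\LL^-)Pw)\otimes y^{k-1}\subseteq W^{(k)}_{k-1}$.
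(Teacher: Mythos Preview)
Your argument is correct and follows the same blueprint as the paper. Both proofs rest on the single observation that, in the shifted action \eqref{shift-1}--\eqref{shift-2}, a negative generator $g_{-n}$ acts on $vw\otimes y^j$ as $(g_{-n}+\text{scalar})vw\otimes y^{j-n}$, and both proceed by induction on the degree (or length) of a PBW monomial in $U(\LL^-)$. Your packaging is more uniform---you phrase everything in terms of one subset ``Leibniz expansion'' of $u(vw\otimes y^j)$---whereas the paper handles (2) by a bare induction on $s+m$, defers (3) to Part~(2) together with the proof of Lemma~\ref{Verma-nonsimple} (exactly your highest-$x$-degree argument), and for (4) peels off one generator $d_{-i}$ or $I_{-i}$ at a time, obtaining the two-term identities
\[
(d_{-i}Pw)\otimes y^{k-1}=d_{-i}\bigl(Pw\otimes y^{k+i-1}\bigr)-(a-m+k-1-ib)\,(Pw)\otimes y^{k-1},
\]
\[
(I_{-i}Pw)\otimes y^{k-1}=I_{-i}\bigl(Pw\otimes y^{k+i-1}\bigr)-F\,(Pw)\otimes y^{k-1},
\]
and then inducts. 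Your version does the same thing in one shot; the only cosmetic caveat is that in your multi-step expansion the scalars $\mu_T$ are not a simple product of fixed $c_i$'s (the scalar at each step depends on the current weight and $y$-index), but since you only use that each subleading term is a scalar multiple of some $u_TPw\otimes y^{k-1}$ with $\deg u_T<\deg u$, this does not affect the argument.
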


\begin{proof} (1). It follows from $U(\LL)(w\otimes y^i)=U(\LL^-)U(\LL^++\LL^0)(w\otimes y^i)\subset\sum_{j\in \Z_+} U(\LL^-)(w\otimes
y^{i+j})$.

(2). Using (1), (\ref{shift-1}) and (\ref{shift-2}), by induction on
$s+m$ it is straightforward to prove   that $I_{-j_1}\ldots
I_{-j_s}d_{-l_1}\cdots d_{-l_m} w\otimes y^i\in W^{(k)}$ for all
$i\ge k$ and $j_1,\ldots,j_s,l_1,\ldots,l_m\in \N$.

(3). This follows from (2) and the proof of Lemma
\ref{Verma-nonsimple}.

(4). Suppose that $P\in U(\LL^-)_m$. From (2), (\ref{shift-1}) and
(\ref{shift-2}), we have
\begin{equation}\aligned (d_{-i}P w)\otimes y^{k-1}=&d_{-i}(P w\otimes
y^{k+i-1}))\\ & -(a-m+k-1-ib) (P w)\otimes y^{k-1}\in
W^{(k)}_{k-1},\endaligned\end{equation}

\begin{equation}\aligned (I_{-i}P w)\otimes y^{k-1}=&I_{-i}(P w\otimes
y^{k+i-1}))\\ &-FP w\otimes y^{k-1}\in W^{(k)}_{k-1},\forall i\in
\N.\endaligned\end{equation} Therefore we may prove (4) by induction
on $m$.
 \end{proof}

 For any $n\in \N$, from Lemma \ref{lemma-20} (3), similar to $\phi_n$ defined in \cite{CGZ} we may define the linear map
 $\rho_n:U(\LL^-)\rightarrow \C$
 inductively as follows

\begin{equation}\label{rho-1}\rho_n(1)=1,\end{equation}
 \begin{equation}\label{rho-2}\rho_n(I_{-i}u)=-F\rho_n(u),\end{equation}
 \begin{equation}\label{rho-3}\rho_n(d_{-i}u)=-(a+k+i+n-ib)\rho_n(u), \forall u\in
 U(\LL^-)_{-k}.\end{equation}

\begin{remark} It is clear that  $\rho_n$  depends only on $a,b,F,n$.
\end{remark}

\begin{lemma} \label{24}   Let $P \in
U(\LL_-)$. Then \begin{enumerate}\item $Pw
\otimes t^n \equiv \rho_n(P)w\otimes t^n\,\, (\mod W^{(n+1)});$
\item    $Pw \otimes t^n \in
W^{(n+1)}$ if and only if $\rho_n(P)=0$.\end{enumerate}
\end{lemma}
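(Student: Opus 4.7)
The plan is to prove (1) by induction on the depth $m$, where $P\in U(\LL^-)_{-m}$, and then to deduce (2) immediately from (1) together with Lemma \ref{lemma-20}(3). Since both sides of the congruence in (1) are linear in $P$, I would reduce to verifying the statement on PBW monomials in $U(\LL^-)$. The base case $m=0$ is trivial: $P\in\C\cdot 1$ and $\rho_n(1)=1$ by (\ref{rho-1}).

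For the inductive step, I would write a PBW monomial as $P=X_{-i}Q$ with $X_{-i}\in\{d_{-i},I_{-i}\}$, $i\in\N$, and $Q\in U(\LL^-)_{-(m-i)}$. The weight of $Qw$ corresponds to $k=i-m$ in the shifted actions (\ref{shift-1})--(\ref{shift-2}); substituting $n\mapsto -i$ and second index $j=n+i$ produces
$$d_{-i}\bigl(Qw\otimes t^{n+i}\bigr)=(Pw)\otimes t^{n}+(a+m+n-ib)\,(Qw)\otimes t^{n},$$
$$I_{-i}\bigl(Qw\otimes t^{n+i}\bigr)=(Pw)\otimes t^{n}+F\,(Qw)\otimes t^{n}.$$
Since $i\geq 1$, the vectors $Qw\otimes t^{n+i}$ lie in $W^{(n+1)}$, and since $W^{(n+1)}$ is an $\LL$-submodule the left-hand sides are also in $W^{(n+1)}$. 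Applying the inductive hypothesis to $Q$, whose depth $m-i$ is strictly smaller, I obtain
$$Pw\otimes t^{n}\equiv -(a+m+n-ib)\,\rho_n(Q)\,w\otimes t^{n}\pmod{W^{(n+1)}}$$
in the $X=d$ case, and $Pw\otimes t^{n}\equiv -F\,\rho_n(Q)\,w\otimes t^{n}$ in the $X=I$ case. Comparing with the recursion (\ref{rho-2})--(\ref{rho-3}) defining $\rho_n$ (with $k=m-i$ in (\ref{rho-3})), each right-hand side equals $\rho_n(P)\,w\otimes t^{n}$, closing the induction.

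For part (2), Lemma \ref{lemma-20}(3) provides the direct sum $M(\dot{I_0},\dot{d_0},\dot{z_1},\dot{z_2},\dot{z_3})\otimes t^{n}=W^{(n+1)}_{n}\oplus\C(w\otimes t^{n})$, so in particular $w\otimes t^n\notin W^{(n+1)}$. Combined with (1), this immediately yields $Pw\otimes t^n\in W^{(n+1)}$ if and only if $\rho_n(P)=0$.

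The only real obstacle will be careful index bookkeeping: I must verify that the scalar picked up when moving $X_{-i}$ across the tensor (namely $a+m+n-ib$ or $F$) agrees with the coefficient $-(a+k+i+n-ib)$ or $-F$ in the defining recursion of $\rho_n$, via the identification $k=m-i$. No deeper structural difficulty is expected, since the shift formulas (\ref{shift-1})--(\ref{shift-2}) are exactly calibrated so that the recursion for $\rho_n$ mirrors the reduction modulo $W^{(n+1)}$.
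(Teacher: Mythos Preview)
Your proposal is correct and follows essentially the same approach as the paper: the paper's proof of (1) simply refers to Lemma~8 in \cite{CGZ} (an induction peeling off the leftmost $d_{-i}$) and adds that one must also handle $I_{-i}$, which is exactly what your induction does; your derivation of (2) from (1) via Lemma~\ref{lemma-20}(3) is likewise what the paper intends by ``Part (2) follows from (1).'' Your index bookkeeping (identifying $k=m-i$ so that the coefficient $-(a+k+i+n-ib)$ becomes $-(a+m+n-ib)$) checks out.
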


\begin{proof}  The proof for (1) is similar to that of Lemma 8 in
\cite{CGZ} but one has also to consider $I_{-k}$. Part (2) follows from (1).\end{proof}

Let us recall some results from \cite{B}.

\begin{theorem}\cite{B}\label{billig} Let $(\dot{I_0},\dot{d_0},\dot{z_1},\dot{z_2},\dot{z_3})\in \C^5$ with $\dot{z_3}=0$. \begin{enumerate}
\item If $\frac{\dot{I_0}}{\dot{z_2}}\notin \Z$ or
$\frac{\dot{I_0}}{\dot{z_2}}= 1$, then
$M(\dot{I_0},\dot{d_0},\dot{z_1},\dot{z_2},0)$ is simple.
\item
If $1-\frac{\dot{I_0}}{\dot{z_2}}\in \N$, then
$M(\dot{I_0},\dot{d_0},\dot{z_1},\dot{z_2},0)$ possesses a singular
weight vector $Qw\in (d_{-p}+U(\LL^-)\H^-)w$ and the quotient
module\\ $M(\dot{I_0},\dot{d_0},\dot{z_1},\dot{z_2},0)/U(\LL^-)Qw$
is simple.

\item
If $\frac{\dot{I_0}}{\dot{z_2}}-1\in \N$, then
$M(\dot{I_0},\dot{d_0},\dot{z_1},\dot{z_2},0)$ possesses a singular
weight vector $Qw\in U(\H^-)\H^-$ and the quotient module\\
$M(\dot{I_0},\dot{d_0},\dot{z_1},\dot{z_2},0)/U(\LL^-)Qw$ is
simple.\end{enumerate}
 \end{theorem}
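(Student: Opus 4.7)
The plan is to search for homogeneous singular vectors in the $\Z_+$-graded Verma module $M:=M(\dot{I_0},\dot{d_0},\dot{z_1},\dot{z_2},0)$, where $M_0=\C w$ and each $M_{-n}$ is finite dimensional; a proper submodule exists if and only if some $M_{-n}$ with $n\geq 1$ contains a nonzero vector $Qw$ annihilated by $\LL^{+}:=\sum_{k\geq 1}(\C d_k+\C I_k)$. Write $\a:=\dot{I_0}/\dot{z_2}$ (the case $\dot{z_2}=0$ is handled by an easier version of the same argument). The crucial simplification from $\dot{z_3}=0$ is that $[I_n,I_m]=0$, so the $I_{-k}$'s generate a polynomial algebra and, since $I_nw=0$ for $n\geq 1$, the entire subalgebra $\H^{+}$ acts as zero on $U(\H^{-})w$. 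Everything is then governed by the two resonance scalars
\begin{equation*}
[d_n,I_{-n}]w = n\dot{z_2}(n+1-\a)w, \qquad I_n d_{-n} w = -n\dot{z_2}(\a+n-1)w,
\end{equation*}
whose vanishing loci are precisely Case~(3) ($\a=n+1$) and Case~(2) ($\a=1-n$). Together with the trivial resonance at $\a=1$ (where neither scalar ever vanishes), this exactly exhausts the trichotomy of the theorem.

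In Case~(3), with $\a-1=p\in\N$, I would look for the singular vector in $U(\H^{-})\H^{-}w$ via the ansatz $Qw=(I_{-p}+R)w$ with $R$ a polynomial of weight $-p$ in $I_{-1},\dots,I_{-p+1}$, and solve $d_mQw=0$ for $1\leq m\leq p$: this gives a triangular linear system whose top consistency condition is precisely the vanishing of $[d_p,I_{-p}]w$. In Case~(2), with $1-\a=p\in\N$, I would use $Qw=(d_{-p}+S)w$ with $S\in U(\LL^{-})\H^{-}$, recursively killing the obstructions from $d_mQw$ and $I_mQw$ for $1\leq m\leq p$; the top entry of the resulting lower-triangular system is proportional to $I_pd_{-p}w$, so the system is consistent exactly in Case~(2). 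For Case~(1), I would rule out singular vectors at every level by writing a candidate $Qw\in M_{-n}$ in the PBW basis indexed by pairs of partitions of total size $n$ (one for $d_{-k}$'s, one for $I_{-k}$'s), and tracking the leading coefficients of $d_nQw$ and $I_nQw$: both come out as nonzero scalar multiples of the two resonance scalars above, which by hypothesis do not vanish, and a descending induction on the largest index appearing in $Q$ forces $Q=0$.

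For simplicity of the quotients in Cases~(2) and~(3), I would verify that the submodule $U(\LL^{-})Qw$ is itself a Verma module with explicitly shifted parameters whose new $\a$-value falls into Case~(1), hence is simple by the previous step; induction on the level then rules out any further proper submodule of the quotient. The main obstacle will be the PBW bookkeeping in Case~(2): the mixed cocycle $(n^{2}+n)z_2$ in $[d_n,I_{-n}]$ couples the $d$- and $I$-blocks of $S$, so one must carefully track how lowering each $d_m$ and $I_m$ through $d_{-p}$ produces both Virasoro and Heisenberg corrections, and verify that the linear system for the coefficients of $S$ is genuinely lower-triangular with top entry matching $I_pd_{-p}w$. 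Once this bookkeeping is in place, the consistency condition and the inductive simplicity argument both follow cleanly.
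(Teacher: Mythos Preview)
The paper does not prove this theorem at all: it is quoted verbatim from Billig~\cite{B} and immediately followed by a remark, with no argument given. So there is no ``paper's own proof'' to compare your proposal against; you are attempting to reconstruct Billig's argument from scratch.

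Your resonance analysis and the ans\"atze for $Q$ in Cases~(2) and~(3) are on the right track. However, the argument you propose for simplicity of the quotients has a genuine gap. You claim that the submodule $U(\LL^{-})Qw$ is a Verma module ``whose new $\alpha$-value falls into Case~(1), hence is simple''. This is false: $I_0$, $z_1$, $z_2$, $z_3$ are all central in $\LL$, so they act by the \emph{same} scalars on every subquotient of $M$. Only $\dot{d_0}$ shifts (by $-p$); the ratio $\alpha=\dot{I_0}/\dot{z_2}$ is unchanged. Thus $U(\LL^{-})Qw$ is a Verma module with the same $\alpha$, still in Case~(2) or~(3), and is certainly \emph{not} simple---it contains the next singular vector $Q'(Qw)$, and so on down an infinite chain. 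Your inductive step therefore never terminates.

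What you actually need for simplicity of $M/U(\LL^{-})Qw$ is that every singular vector of $M$ already lies in $\C w + U(\LL^{-})Qw$. Equivalently, you must show that at each level $-n$ the space of singular vectors is at most one-dimensional, and that when $n$ is a multiple of $p$ the unique singular direction is the one produced by iterating $Q$. The leading-term/PBW analysis you sketch for Case~(1) is the right tool here, but it has to be sharpened from ``no singular vectors'' to ``at most a one-dimensional space of singular vectors, and we know which line it is''. That refinement is where the real work in Billig's proof lies.
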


\begin{remark}(1) Let $M(\dot{d_0}, \dot{z_1})$ be the Verma module
 over $\V$. Then it is well-known that (see \cite{FF}, for example), there
 exist two weight vectors $Q_1w', Q_2w'$ such that $U(\V^-) Q_1w'+U(\V^-) Q_2 w'$
 is the maximal proper submodule of $M(\dot{d_0}, \dot{z_1})$, where
$Q_1, Q_2\in U(\V^-)$ may be zero or be equal and $w'$ is the
highest weight vector in $M(\dot{d_0}, \dot{z_1})$.

  (2) If $\dot{z_3}=\dot{z_2}=0$ and $\dot{I_0}\ne 0$,  from \cite{ACKP}
 or Theorem 1 in \cite{CG} we know that
 $M(\dot{I_0},\dot{d_0},\dot{z_1},\dot{z_2},\dot{z_3})$ is simple.

  (3) If $\dot{z_3}\ne 0$, denote by $H$
the highest weight $\H$-module with $I_0=\dot{I_0}$ and
$z_3=\dot{z_3}$. From Theorem \ref{thm-whittaker-1} or \cite{ACKP}
we know that
$$M(\dot{I_0},\dot{d_0},\dot{z_1},\dot{z_2},\dot{z_3})\cong
H(\dot{z_2})\otimes
M(\dot{d_0}+\frac{1}{2\dot{z_3}}\dot{I_0}^2-\frac{\dot{z_2}}{\dot{z_3}}\dot{I_0},\dot{z_1}-1
+\frac{12\dot{z_2}^2}{\dot{z_3}})^\LL.$$ Let
$J=U(\Vir^-)Q_1w_2+U(\Vir^-)Q_2w_2$ be the maximal proper submodule
of
$M(\dot{d_0}+\frac{1}{2\dot{z_3}}\dot{I_0}^2-\frac{\dot{z_2}}{\dot{z_3}}\dot{I_0},\dot{z_1}-1
+\frac{12\dot{z_2}^2}{\dot{z_3}})$ where $Q_1,Q_2\in U(\Vir^-)$ and
$w_2$ is a highest weight vector. From Theorem \ref{thm-whittaker-1} we know
that  $U(\LL^{-})(w_1\otimes Q_1w_2)+U(\LL^-)(w_1\otimes Q_2w_2)$ is
the unique maximal proper submodule of $H(\dot{z_2})\otimes
M(\dot{d_0}+\frac{1}{2\dot{z_3}}\dot{I_0}^2-\frac{\dot{z_2}}{\dot{z_3}}\dot{I_0},\dot{z_1}-1+\frac{12\dot{z_2}^2}{\dot{z_3}})^\LL,$
where $w_1,w_2$ are highest weight vectors of $H(\dot{z_2})$ and
$M(\dot{d_0}+\frac{1}{2\dot{z_3}}\dot{I_0}^2-\frac{\dot{z_2}}{\dot{z_3}}\dot{I_0},\dot{z_1}-1+\frac{12\dot{z_2}^2}{\dot{z_3}})$
respectively. So we have proved that the maximal proper
$\LL$-submodule of
$M(\dot{I_0},\dot{d_0},\dot{z_1},\dot{z_2},\dot{z_3})$ can also be
generated by at most two weight singular vectors.
\end{remark}

Suppose that $(\dot{I_0}, \dot{z_2},\dot{z_3})\ne (0,0,0)$, from the
remark above, we always have weight vectors (or zero)  $ \widetilde
Q_1, \widetilde Q_2\in U(\LL^-)$ such that the maximal proper
$\LL$-submodule of
$M(\dot{I_0},\dot{d_0},\dot{z_1},\dot{z_2},\dot{z_3})$ is generated
by $\widetilde{Q}_1 w,  \widetilde{Q}_2 w$ as $U(\LL^-)$ modules.

\begin{lemma}\label{lemma-24} Suppose that $(\dot{I_0}, \dot{z_2},\dot{z_3})\ne
(0,0,0)$ and
$V(\dot{I_0},\dot{d_0},\dot{z_1},\dot{z_2},\dot{z_3})\otimes
A'(a,b;F)$ satisfies

  {\bf Condition B:} For any nonzero submodule $V_1$ of
$V(\dot{I_0},\dot{d_0},\dot{z_1},\dot{z_2},\dot{z_3})\otimes
A'(a,b,F)$, there exists some $k$ (depending on $V_1$), such that
$\bar{w}\otimes x^{k+i}\in V_1$ for all $i\in \Z_+$.
\begin{enumerate}\item Suppose that $A(a,b;F)$ is simple. Then
$V(\dot{I_0},\dot{d_0},\dot{z_1},\dot{z_2},\dot{z_3})\otimes
A(a,b;F) $ is simple if and only if
$(\rho_n(\widetilde{Q}_1),\rho_n(\widetilde{Q}_2))\ne (0,0)$ for all
$n\in \Z$. \item If $(a,b,F)=(0,0,0)$, then
$V(\dot{I_0},\dot{d_0},\dot{z_1},\dot{z_2},\dot{z_3})\otimes
A'(0,0,0) $ is simple if and only if
$(\rho_n(\widetilde{Q}_1),\rho_n(\widetilde{Q}_2))\ne (0,0)$ for all
$n\in \Z\backslash\{0\}$. \end{enumerate}
\end{lemma}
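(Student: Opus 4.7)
The plan is to prove both parts uniformly, leveraging the description $V\otimes A=(M\otimes A)/(J\otimes A)$, where $M=M(\dot I_0,\dot d_0,\dot z_1,\dot z_2,\dot z_3)$, $J$ is its maximal proper submodule, and $\pi\colon M\otimes A\tto V\otimes A$ is the canonical surjection; here $A$ stands for $A(a,b;F)$ in part (1) and for $A'(0,0,0)$ in part (2). Throughout I write $\bar W^{(k)}=\pi(W^{(k)})$.

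For the necessity direction, assume $(\rho_n(\widetilde Q_1),\rho_n(\widetilde Q_2))=(0,0)$ for some $n$ (with $n\ne 0$ in part (2)). I will show that $\bar W^{(n+1)}$ is a proper nonzero submodule of $V\otimes A$, contradicting simplicity. Nonzero-ness is immediate since for some $i\ge 1$ the vector $\bar w\otimes x^{n+i}$ is nonzero (in part (2) this uses $n\ne 0$). For properness, it suffices to check that $w\otimes x^n\notin W^{(n+1)}+J\otimes A$. Lemma \ref{lemma-20}(3) furnishes a projection $p\colon M\otimes y^n\to\C$ with kernel $W^{(n+1)}_n$ and $p(w\otimes y^n)=1$. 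For any $P\in U(\LL^-)$, the inductive formulas (\ref{rho-2}) and (\ref{rho-3}) show that $\rho_n(P\widetilde Q_j)$ is a scalar multiple of $\rho_n(\widetilde Q_j)=0$, and Lemma \ref{24} yields $p((P\widetilde Q_j w)\otimes y^n)=\rho_n(P\widetilde Q_j)=0$. Since $J=U(\LL^-)\widetilde Q_1 w+U(\LL^-)\widetilde Q_2 w$ (by the discussion preceding this lemma), $p$ vanishes on the entire $y^n$-slice of $J\otimes A$, so $p(W^{(n+1)}+J\otimes A)=0$ while $p(w\otimes x^n)=1$, as needed.

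For the sufficiency direction, let $V_1$ be a nonzero submodule of $V\otimes A$. Condition B supplies $k\in\Z$ with $\bar w\otimes x^{k+i}\in V_1$ for all $i\in\Z_+$, so $V_1\supseteq\bar W^{(k)}$. I shift down one index at a time: applying Lemma \ref{24} in $M$ with $P=\widetilde Q_j$ and descending via $\pi$ gives
\begin{equation*}
\widetilde Q_j\bar w\otimes y^{k-1}\equiv \rho_{k-1}(\widetilde Q_j)\bar w\otimes y^{k-1}\pmod{\bar W^{(k)}}.
\end{equation*}
Since $\widetilde Q_j\bar w=0$ in $V$, this forces $\rho_{k-1}(\widetilde Q_j)\bar w\otimes x^{k-1}\in\bar W^{(k)}$; choosing $j$ with $\rho_{k-1}(\widetilde Q_j)\ne 0$ (and in part (2) noting that the step at subscript $0$ is vacuous because $\bar w\otimes x^0=0$), I obtain $\bar w\otimes x^{k-1}\in V_1$. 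Iterating gives $\bar w\otimes x^m\in V_1$ for every admissible $m$. A routine induction on the PBW-degree of $P\in U(\LL^-)$---based on identities such as $(d_{-i}P')\bar w\otimes x^m=d_{-i}(P'\bar w\otimes x^m)-(a+m-ib)P'\bar w\otimes x^{m-i}$ and its $I_{-i}$-analogue---then upgrades this to $P\bar w\otimes x^m\in V_1$ for all $P$ and $m$. Since $V=U(\LL^-)\bar w$, we conclude $V_1=V\otimes A$.

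The main technical nuisance will be bookkeeping the degree shift $v_k\otimes x^i=v_k\otimes y^{i+k}$ so that Lemma \ref{24} is invoked on the correct $y$-slice, and isolating the role of the exceptional index $n=0$ in part (2) by exploiting that $\bar w\otimes x^0$ already vanishes in $A'(0,0,0)$.
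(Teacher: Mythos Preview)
Your proof is correct and follows essentially the same route as the paper's. The paper compresses the argument into a single chain of equivalences (simple $\Leftrightarrow$ $J\otimes y^n+W^{(n+1)}_n=M\otimes y^n$ for all $n$ $\Leftrightarrow$ $\{\widetilde Q_1w\otimes y^n,\widetilde Q_2w\otimes y^n\}\not\subset W^{(n+1)}_n$ $\Leftrightarrow$ $(\rho_n(\widetilde Q_1),\rho_n(\widetilde Q_2))\ne(0,0)$), invoking Lemma~\ref{lemma-20}(4) for the middle step, whereas you unpack necessity and sufficiency separately and replace the appeal to Lemma~\ref{lemma-20}(4) by the multiplicativity observation $\rho_n(P\widetilde Q_j)\in\C\cdot\rho_n(\widetilde Q_j)$; these are equivalent. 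Your treatment of the exceptional index $n=0$ in part~(2) via $\bar w\otimes x^0=0$ matches the paper's one-line remark exactly.
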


\begin{proof}Denote by $J=U(\LL^-)\widetilde{Q}_1 w+U(\LL^-)\widetilde{Q}_2
w$ the maximal proper submodule of
$M(\dot{I_0},\dot{d_0},\dot{z_1},\dot{z_2},\dot{z_3})$.

(1) From condition B and Lemmas \ref{lemma-20}, \ref{24}, it is
clear that the module
$V(\dot{I_0},\dot{d_0},\dot{z_1},\dot{z_2},\dot{z_3})\otimes
A(a,b;F)$ is simple if and only if $J\otimes
y^n+W_n^{(n+1)}=M(\dot{I_0},\dot{d_0},\dot{z_1},\dot{z_2},\dot{z_3})\otimes
y^n$ for all $n\in \Z$ if and only if $(U(\LL^-)\widetilde{Q}_1
w+U(\LL^-)\widetilde{Q}_2w)\otimes y^n\not\subset W_n^{(n+1)}$ for
all $n\in \Z$ if and only if $\{Q_1w\otimes y^n,Q_2w\otimes
y^n\}\not\subset W_n^{(n+1)}$ for all $n\in \Z$ if and only if
$(\rho_n(\widetilde{Q}_1),\rho_n(\widetilde{Q}_2))\ne (0,0)$ for all
$n\in \Z$.

(2) The proof is similar to that of Part (1). The only difference is
that we don't need $(\rho_0(Q_1),\rho_0(Q_2))\ne (0,0)$ since the
image of $v\otimes y^0$ is zero in
$V(\dot{I_0},\dot{d_0},\dot{z_1},\dot{z_2},\dot{z_3})\otimes
A'(0,0,0)$.
\end{proof}

Now we summarize the established results into the following main
result in this section.

\begin{theorem} Let $a,b, F,\dot{I_0},\dot{d_0},\dot{z_1},\dot{z_2},\dot{z_3}\in \C$.

 \begin{enumerate}
\item If $F\ne 0$, then $V(0,\dot{d_0},\dot{z_1},0,0)\otimes
A'(a,b;F)$ is simple.

\item  $V(0,\dot{d_0},\dot{z_1},0,0)\otimes
A'(a,b;0)$ is simple if and only if $V(\dot{d_0},\dot{z_1})\otimes
A'(a,b)$
 is a simple $\Vir$-module, which is determined in \cite{CGZ}.

 \item Suppose that $(\dot{I_0}, \dot{z_2},\dot{z_3})\ne (0,0,0)$ and $A(a,b;F)$ is simple. Then
$V(\dot{I_0},\dot{d_0},\dot{z_1},\dot{z_2},\dot{z_3})\otimes
A(a,b;F) $ is simple if and only if
$(\rho_n(\widetilde{Q}_1),\rho_n(\widetilde{Q}_2))\ne (0,0)$ for all
$n\in \Z$.
\item Suppose that $(\dot{I_0}, \dot{z_2},\dot{z_3})\ne (0,0,0)$ and $(a,b,F)=(0,0,0)$. Then
$V(\dot{I_0},\dot{d_0},\dot{z_1},\dot{z_2},\dot{z_3})\otimes
A'(0,0,0) $ is simple if and only if the pairs
$(\rho_n(\widetilde{Q}_1),\rho_n(\widetilde{Q}_2))\ne (0,0)$ for all
$n\in \Z\backslash\{0\}$.\end{enumerate}
\end{theorem}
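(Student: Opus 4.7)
The plan is to dispatch parts (1) and (2) by reducing them to previously established Virasoro results, and to handle parts (3) and (4) by invoking Lemma \ref{lemma-24} after verifying Condition B in each remaining case.

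For parts (1) and (2) I would first observe that when $\dot{I_0}=\dot{z_2}=\dot{z_3}=0$ the Heisenberg subalgebra acts commutatively on the Verma module $M(0,\dot{d_0},\dot{z_1},0,0)$; a short computation shows that $I_{-1}\bar w$ is singular of weight $\dot{d_0}-1$, and an induction shows that every $I_{-n}\bar w$ lies in the maximal proper submodule. Hence $V(0,\dot{d_0},\dot{z_1},0,0)$ has $\H$ acting trivially and is isomorphic to $V(\dot{d_0},\dot{z_1})^{\LL}$. Part (1) then follows from Lemma \ref{0-0-0}(2) (using $A'(a,b;F)=A(a,b;F)$ when $F\neq 0$ and the commutativity of tensor products). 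For part (2) both tensor factors have $\H$ acting trivially, so every $\LL$-submodule is automatically a $\Vir$-submodule, and simplicity as $\LL$-module is equivalent to simplicity of the $\Vir$-module $V(\dot{d_0},\dot{z_1})\otimes A'(a,b)$, which is the classification in \cite{CGZ}.

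For parts (3) and (4) the plan is to apply Lemma \ref{lemma-24}, so the task reduces to verifying Condition B. When $F\neq 0$ this is immediate from Lemma \ref{lemma-19}: that lemma produces $\bar w\otimes x^k\in V_1$, and then $I_m(\bar w\otimes x^k)=F\bar w\otimes x^{k+m}$ for all $m\ge 1$ (since $I_m\bar w=0$) yields every $\bar w\otimes x^{k+i}\in V_1$. The same argument handles part (4), with the single caveat that the image of $\bar w\otimes x^0$ vanishes in $A'(0,0,0)$, which is precisely why the condition at $n=0$ is dropped there. In the remaining subcase of part (3), namely $F=0$ with $A(a,b;0)$ simple (so $a\notin\Z$ or $b\notin\{0,1\}$), I would adapt the density reduction to the simple $\Vir$-module $A(a,b;0)$, using the $U(\LL^-)$-action on the $V$-factor to shrink the length of a general element $\beta=\sum_{i=0}^s v_{-i}\otimes x^{k+i}\in V_1$ down to $s=0$; iteration of $d_m(\bar w\otimes x^k)=(a+k+mb)\bar w\otimes x^{k+m}$, which is nonzero for all but finitely many $m\ge 1$ (because $b\neq 0$ whenever $a\in\Z$ and $A(a,b;0)$ is simple), then delivers Condition B.

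The main obstacle is precisely this verification of Condition B in the $F=0$ subcase of part (3). The Heisenberg-density argument underpinning Lemma \ref{lemma-19} is unavailable, and $A(a,b;0)$ fails to be simple as a $\Vir^{(n)}$-module for any $n\ge 1$, since positive-mode Virasoro operators only raise the power of $x$. The replacement argument must therefore combine raising operators on $A(a,b;0)$ with the $U(\LL^-)$-action on the highest weight module $V$ in order to isolate a pure tensor $\bar w\otimes x^k$ from a general $\beta$; organizing the induction on $s$ and ensuring that the isolating operator lies in the joint annihilator of $\{v_{-i}\}_{i\ge 1}$ is the technical heart of the proof. Once this step is settled, parts (3) and (4) become formal consequences of Lemma \ref{lemma-24}.
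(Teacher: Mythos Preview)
Your plan for parts (1), (2), and the $F\ne 0$ subcase of part (3) is correct and matches the paper. The gap lies in how you treat $F=0$.

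First, a slip: part (4) has $(a,b,F)=(0,0,0)$, so $F=0$ there, and the iteration $I_m(\bar w\otimes x^k)=F\,\bar w\otimes x^{k+m}$ vanishes identically. Hence part (4) is not handled by ``the same argument'' as the $F\ne 0$ case of (3); it belongs with the $F=0$ analysis, and Condition B must be supplied for it by some other route.

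Second, for the $F=0$ cases of (3) and (4) you propose a uniform verification of Condition B and correctly flag the reduction from a general $\beta$ to a pure tensor $\bar w\otimes x^k$ as the hard step; your sketch of that step (``$U(\LL^-)$-action on the $V$-factor'') does not yet isolate the right operator, and since $A(a,b;0)$ is not simple over $\Vir^{(n)}$ for $n\ge 1$ there is no direct density substitute for Lemma~\ref{lemma-19}. The paper does not carry out this uniform reduction at all. Instead it splits $F=0$ according to which central parameter is nonzero. If $\dot z_3\ne 0$, Lemma~\ref{0-0-0}(1) (the oscillator decomposition $V\cong H(\dot z_2)\otimes U^{\LL}$) reduces the question to the Virasoro tensor product $U\otimes A'(a,b)$ already settled in \cite{CGZ}, bypassing Condition B entirely. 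If $\dot z_3=\dot z_2=0$ and $\dot I_0\ne 0$, then $V$ equals its Verma module, so Corollary~\ref{simplicity-00} (via Lemma~\ref{Verma-nonsimple}) shows the tensor product is never simple, which matches the $\rho_n$-criterion because $\widetilde Q_1=\widetilde Q_2=0$. Only in the remaining case $\dot z_3=0$, $\dot z_2\ne 0$ does the paper actually establish Condition B, and it does so by importing the argument from the proof of Theorem~45 in \cite{R}. Thus where you attempt one hard uniform lemma, the paper substitutes three short pointers to existing results.
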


\begin{proof} (1) is from Lemma \ref{0-0-0}. (2) is trivial. If $\dot{z_3}=\dot{z_2}= 0$
and $\dot{I_0}\ne0$, we have the theorem from Corollary
\ref{simplicity-00}.  If $F=0$ and $\dot{z_3}\ne0$, the theorem
follows from Lemma \ref{0-0-0}.  For other case, we need to check
the Condition B in Lemma \ref{lemma-24}. If $F\ne 0$, we have
Condition B hold from Lemma \ref{lemma-19}. If $F=\dot{z_3}=0$ and
$\dot{z_2}\ne 0$, we have Condition B hold from the proof of Theorem
45 in \cite{R}.
\end{proof}

The simplicity of the tensor product $A'(a,b;0)\otimes
V(\dot{I_0},\dot{d_0},\dot{z_1},\dot{z_2}, 0)$ is obtained in
\cite{R}. Let us recover it as an example.

\begin{theorem}\cite{R}\label{R} Suppose that $\dot{z_3}=0$ and $\dot{z_2}\ne 0$.
 \begin{enumerate}
\item  For $1-\frac{\dot{I_0}}{\dot{z_2}}=p\in \N$, the module
$A'(a,b;0)\otimes V(\dot{I_0},\dot{d_0},\dot{z_1},\dot{z_2}, 0)$ is
simple if and only if $a-pb\notin \Z$.
\item  For $\frac{\dot{I_0}}{\dot{z_2}}-1\in \N$,  the module $A'(a,b;0)\otimes
V(\dot{I_0},\dot{d_0},\dot{z_1},\dot{z_2}, 0)$ is not simple.
\item  The module $A'(a,b;F)\otimes
V(0,0,\dot{z_1},\dot{z_2}, 0)$ is simple if and only if $a-b\notin
\Z$.\end{enumerate}
\end{theorem}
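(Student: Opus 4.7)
The plan is to deduce Theorem 28 from Theorem 27 by identifying the singular-vector generators $\widetilde Q_i$ of the maximal proper submodule of $M(\dot I_0, \dot d_0, \dot z_1, \dot z_2, 0)$ supplied by Lemma 22, and then computing the scalars $\rho_n(\widetilde Q_i)$ in each case. Condition B in Lemma 24 is already verified inside the proof of Theorem 27 (from Lemma 19 when $F \ne 0$, and from the argument behind Theorem 45 of \cite{R} when $F = 0$ and $\dot z_2 \ne 0$), so the whole theorem reduces to the $\rho_n$-calculations.

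For part (1), Lemma 22 supplies a single singular vector $\widetilde Q_1 = d_{-p} + R$ with $R \in U(\LL^-)\H^-$, and one may take $\widetilde Q_2 = 0$. The key point is that $\rho_n(R) = 0$: fix a PBW basis of $U(\LL^-)$ in which every monomial has all $I_{-j}$-factors ordered to the left of all $d_{-i}$-factors. Using $[d_{-k}, I_{-j}] = -j I_{-j-k} \in \H^-$ to commute trailing $I$'s leftward, one sees that every PBW summand of any element of $U(\LL^-)\H^-$ still begins on the left with some $I_{-?}$. Since $F = 0$ forces $\rho_n(I_{-i} u) = -F\rho_n(u) = 0$, one gets $\rho_n(R) = 0$, so $\rho_n(\widetilde Q_1) = \rho_n(d_{-p}) = -(a + p + n - pb)$, which is nonzero for all $n \in \Z$ precisely when $a - pb \notin \Z$. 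Part (2) is parallel: $\widetilde Q_1 \in U(\H^-)\H^-$ is already a sum of positive-degree monomials in the $I_{-j}$'s, so $\rho_n(\widetilde Q_1) = 0$ for every $n$ (still $F = 0$), and the criterion of Theorem 27 fails, yielding non-simplicity. For part (3), the hypotheses $\dot I_0 = \dot d_0 = 0$ place us in Billig's case with $p = 1$, and a direct check of $d_m(d_{-1} w)$ and $I_m(d_{-1} w)$ via the commutation relations shows that $d_{-1} w$ is itself singular, so $\widetilde Q_1 = d_{-1}$ and $\rho_n(\widetilde Q_1) = -(a - b + n + 1)$. Theorem 27 (3) applies when $F \ne 0$ (where $A(a,b;F)$ is simple) and Theorem 27 (4) when $(a,b,F) = (0,0,0)$; in both situations the relevant nonvanishing of $\rho_n$ translates to $a - b \notin \Z$.

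The main obstacle is the $\rho_n$-evaluation on the non-explicit remainder $R$ in part (1); the PBW rearrangement above handles it, using only the abelianness of $\H^-$ and the inclusion $[d_{-k}, I_{-j}] \in \H^-$. A minor bookkeeping point is that in the Billig cases arising in parts (1) and (2), Lemma 22 produces only a single singular vector, so the pair criterion of Lemma 24 collapses to the single condition $\rho_n(\widetilde Q_1) \ne 0$ used above. Combining these three computations with Theorem 27 yields the three statements of Theorem 28.
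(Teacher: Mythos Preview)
Your proposal is correct and follows essentially the same route as the paper: identify the singular generator(s) $\widetilde Q$ via Billig's Lemma~22, compute $\rho_n(\widetilde Q)$, and read off the simplicity criterion from Theorem~27 / Lemma~24. The paper's proof simply asserts ``from easy computations $\rho_n(Q)=0$ if $\frac{\dot I_0}{\dot z_2}-1\in\N$ and $\rho_n(Q)=\rho_n(d_{-p})$ otherwise,'' whereas you supply the PBW justification that any element of $U(\LL^-)\H^-$, once reordered with $I$'s leftmost, has every monomial beginning with some $I_{-j}$, so that $\rho_n$ kills it when $F=0$; this is exactly the content the paper suppresses.

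One small point of bookkeeping in part~(3): your case split ``Theorem~27\,(3) when $F\ne 0$, Theorem~27\,(4) when $(a,b,F)=(0,0,0)$'' omits the case $F=0$ with $A(a,b;0)$ simple. That case is also covered by Theorem~27\,(3) (whose hypothesis is only that $A(a,b;F)$ be simple, not that $F\ne 0$), and the same computation $\rho_n(d_{-1})=-(a+1+n-b)$ gives the same condition $a-b\notin\Z$. The remaining non-simple $F=0$ cases reduce to $(0,0,0)$ via $A'(a,b;0)\cong A'(0,0,0)$, where $a-b\in\Z$ automatically and $\rho_{-1}(d_{-1})=0$ confirms non-simplicity. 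With that adjustment your argument is complete and matches the paper's.
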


\begin{proof}Suppose that $F=0$. Using Lemma \ref{billig}, from easy computations we
see
that $\rho_n(Q)=0$ if $\frac{\dot{I_0}}{\dot{z_2}}-1\in \N$, and
$\rho_n(Q)=\rho_n(d_{-p})=-(a+p+n-pb)$ if
$1-\frac{\dot{I_0}}{\dot{z_2}}=p\in \N$,  which imply (1) and (2).

Now suppose that
$(\dot{I_0},\dot{d_0},\dot{z_1},\dot{z_2},\dot{z_3})=(0,0,\dot{z_1},\dot{z_2},
0)$, then it is straightforward to see that $Q=d_{-1}$. And we have
$\rho_n(Q)=\rho_n(d_{-1})=-(a+1+n-b)$, which implies (3).\end{proof}

\begin{example}Suppose that $\dot{z_3}\ne 0$ and $F\ne 0$. Let $H$ be the highest weight module over $\H$ with $I_0=\dot{I_0}$ and $z_3=\dot{z_3}$.
Then
$M(\dot{I_0},-\frac{\dot{I_0}^2}{\dot{2z_3}}+\frac{\dot{I_0}\dot{z_2}}{\dot{z_3}},
2-\frac{12\dot{z_2}^2}{\dot{z_3}},\dot{z_2},\dot{z_3})\cong
H(\dot{z_2})\otimes M(0,1)$. The maximal submodule of $M(0,1)$
is generated by the weight vector of weight $-1$. It is
straightforward to obtain that
$\widetilde{Q}_1=\widetilde{Q}_2=d_{-1}+\frac{\dot{I_0}}{\dot{z_3}}I_{-1}$.

We know that
$\rho_n(\widetilde{Q}_1)=-(a+1+n-b)-\frac{\dot{I_0}}{\dot{z_3}}F$.
Therefore
$V(\dot{I_0},-\frac{\dot{I_0}^2}{\dot{2z_3}}+\frac{\dot{I_0}\dot{z_2}}{\dot{z_3}},2-\frac{12\dot{z_2}^2}{\dot{z_3}},\dot{z_2},\dot{z_3})\otimes
A(a,b,F)$ is simple if and only if
$b-a-\frac{\dot{I_0}}{\dot{z_3}}F\notin \Z$. If
$n=b-a-1-\frac{\dot{I_0}}{\dot{z_3}}F\in \Z$. So $U(\LL)\cdot
(\bar{w}\otimes y^{n+1})$ is the unique minimal submodule of
$V(\dot{I_0},-\frac{\dot{I_0}^2}{\dot{2z_3}}+\frac{\dot{I_0}\dot{z_2}}{\dot{z_3}},2-\frac{12\dot{z_2}^2}{\dot{z_3}},\dot{z_2},\dot{z_3})\otimes
A(a,b;F)$, which is simple and the quotient
$$V(\dot{I_0},-\frac{\dot{I_0}^2}{\dot{2z_3}}+\frac{\dot{I_0}\dot{z_2}}{\dot{z_3}},2-\frac{12\dot{z_2}^2}{\dot{z_3}},\dot{z_2},\dot{z_3})\otimes
A(a,b;F)/(U(\LL)\cdot (\bar{w}\otimes y^{n+1}))$$ is a highest weight
module with the highest weight vector $\bar{w}\otimes y^n+U(\LL)\cdot
(\bar{w}\otimes y^{n+1})$.

\end{example}

\section{Simple $\LL$-modules from $\mathcal{O}_{\WW}$}

We will use the algebras defined in Sect.1: $\WW$,  $\aa$, $\LL^{(r,s)}$ and $\LL[\lambda]$.

In this section we will first classify all simple modules in
$\mathcal{O}_{\WW}$, then use the ``embedding trick" to make these
simple $\WW$-modules into simple $\LL$-modules.

\subsection{Simple modules in $\mathcal{O}_{\WW}$}

For any  $B\in \mathcal{O}_\aa$ and $0\ne v\in B$, define
$\ord_{\aa}(v)$, the order of $v$, to be  the minimal nonnegative
integer $r$ with $I_{r+i} v=0$ for all $i\ge 0$. And
$\ord_{\aa}(B)$, the order of $B$, is defined to be the maximal
order of all its elements or $\infty$ if it doesn't exist.

\begin{lemma}\label{char-O(b)}Suppose that $B\in \mathcal{O}_{\aa}$ is
simple.\begin{enumerate}
 \item We have
$\ord_{\aa}(B)=\ord_{\aa}(v)$ for all $ 0\ne v\in B$. And there
exists some $(r,s)\in \Z_+^2$ such that $\LL^{(r,s)} B=0$, i.e., $B$
can be regarded as a simple module over $\aa_{r,s}$.
\item If $\ord_{\aa}(B)=0$, then $B$ is a simple
$\ma$-module.
\item If $r=\ord_{\aa}(B)>0$, then the action
of $I_{r-1}$ on $B$ is bijective.\end{enumerate}\end{lemma}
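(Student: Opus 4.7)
The plan rests on one key observation: for each $r \ge 0$ the subspace $B^{(r)} := \{v \in B : I_{r+i}v = 0 \text{ for all } i \ge 0\}$ is an $\aa$-submodule of $B$. This is a direct bracket computation: $[d_j, I_{r+i}] = (r+i) I_{r+i+j}$ (the potential $z_2$-term $\delta_{j,-(r+i)}(j^2+j)z_2$ vanishes because $j, r+i \ge 0$ forces $j = r+i = 0$, where $j^2+j = 0$), so $I_{r+i}(d_j v) = -(r+i) I_{r+i+j} v = 0$ for $v \in B^{(r)}$ and $j \ge 0$; similarly $[I_j, I_{r+i}]$ has no surviving $z_3$-term for $j, r+i \ge 0$, giving $I_{r+i}(I_j v) = 0$. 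This is essentially the only nontrivial computation in the proof.

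For part (1), pick any $v_0 \ne 0$ with $\ord_{\aa}(v_0) = m$. Then $v_0 \in B^{(m)}$, so by simplicity $B^{(m)} = B$; hence every nonzero vector has order at most $m$, and comparing two nonzero vectors gives the common order $\ord_{\aa}(B) = m$. For the existence of $(r,s)$ with $\LL^{(r,s)}B = 0$, take $r = \ord_{\aa}(B)$; this gives $I_{r+i}B = 0$ for all $i \ge 0$. Now pick any $v_0 \ne 0$; Condition A furnishes $N$ with $d_{N+i}v_0 = 0$ for $i \ge 0$. Set $s = \max(N, r)$ and $B_s := \{v \in B : d_{s+i}v = 0, \forall i \ge 0\}$. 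Analogous bracket computations (crucially, any $I_{s+i+j}v$ that appears vanishes because $s + i + j \ge s \ge r$) show $B_s$ is $\aa$-stable, and since $v_0 \in B_s$, simplicity forces $B_s = B$.

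Part (2) is immediate: $\ord_{\aa}(B) = 0$ gives $I_iB = 0$ for all $i \ge 0$, so the $I_i$'s span an ideal of $\aa$ acting trivially, and $\aa$- and $\ma$-submodules of $B$ coincide. For part (3), by part (1) every nonzero $v$ has order exactly $r$, so by minimality of $r$ we get $I_{r-1}v \ne 0$; this gives injectivity of $I_{r-1}$. The main obstacle is surjectivity, which I plan to handle by showing that the image $I_{r-1}B$ is itself a nonzero $\aa$-submodule, whence $I_{r-1}B = B$ by simplicity. From $[d_j, I_{r-1}] = (r-1)I_{r-1+j}$ one computes
\[
d_j(I_{r-1}v) = I_{r-1}(d_jv) + (r-1)I_{r-1+j}v,
\]
and the correction term lies in $I_{r-1}B$ when $j = 0$, while for $j \ge 1$ it vanishes because $r - 1 + j \ge r$ forces $I_{r-1+j}v = 0$. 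A parallel computation, in which any central $z_3$-contribution again vanishes automatically, handles $I_j(I_{r-1}v) = I_{r-1}(I_jv)$.
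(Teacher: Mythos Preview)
Your proposal is correct and follows essentially the same approach as the paper. The paper phrases part~(1) via a commutation argument in $U(\aa)$ (writing $v' = uv$ and checking $I_i uv = 0$ for $i \ge \ord_{\aa}(v)$), which is equivalent to your observation that $B^{(m)}$ is an $\aa$-submodule; for part~(3) the paper likewise shows both $\ker I_{r-1}$ and $I_{r-1}B$ are submodules, while you obtain injectivity directly from the common order and handle surjectivity identically.
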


\begin{proof}For any nonzero $v,v'\in B$, since $B$ is simple, there exists some $u\in U(\aa)$, such that $v'=uv$.
It is straightforward to check that $I_i v'=I_iuv=0, \forall i\ge
\ord_{\aa}(v)$. So $\ord_{\aa}(v)\ge \ord_{\aa}(v')$. Similarly we
have $\ord_{\aa}(v')\ge \ord_{\aa}(v)$. Thus
$\ord_{\aa}(v')=\ord_{\aa}(v)$.  Now suppose that $d_i v=0,\forall
i\ge k$. Then take $s=\max\{k,r\}$. It is easy to verify that
$\LL^{(r,s)}B=0$. So we have proved (1). Part (2) is trivial.

Now suppose that $r=\ord_{\aa}(B)>0$.
Consider the subspace $X=\{v\in B|I_{r-1} v=0\}$ which is a proper
subspace of $B$. Then $X$ and $I_{r-1} B$ are $\aa$-submodules of
$B$. Since $B$ is simple, $X\ne B$ and $I_{r-1} B\ne 0$, we deduce
that $X=0$ and $I_{r-1} B=B$, i.e., $I_{r-1} B$ is bijective. Part
(3) follows.
\end{proof}

\begin{lemma}\label{char-O(ww)}Let $B\in
\mathcal{O}_{\aa}$, $W,W_1\in\mathcal{O}_{\WW}$ be nontrivial simple
modules. \begin{enumerate} \item The module $\Ind_{\aa}^{\WW}(B)$ is simple
 in $\mathcal{O}_{\WW}$;
\item The module  $\Soc_{\aa}(W)$ is a simple $\aa$-module, and
an essential
$\aa$-submodule of $W$;% i.e. it equals the intersection of
%all nonzero $\aa$-submodule of $W$;
\item We have $W\cong\Ind_{\aa}^{\WW}\Soc_{\aa}(W),
B= \Soc_{\aa}(\Ind_{\aa}^{\WW}B)$;
\item We have $W\cong W_1$ if and only if $\Soc_{\aa}(W)\cong
\Soc_{\aa}(W_1)$.\end{enumerate}
\end{lemma}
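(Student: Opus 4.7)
The strategy is to parallel the proof of Lemma~\ref{char-O(w)} (the analogous result for the pair $(\W,\mathfrak{a})$), with the modifications needed to accommodate the Heisenberg generators $I_i$. The central structural fact is the one-dimensional quotient $\WW/\aa=\C d_{-1}$, so by PBW any induced module $\Ind_\aa^\WW B$ decomposes as $\bigoplus_{n\ge0} d_{-1}^n\otimes B$, giving a natural filtration by $d_{-1}$-degree. The main tools are Frobenius reciprocity, this $d_{-1}$-filtration, Lemma~\ref{eq-condition} for finding vectors annihilated by $\WW^{(s)}$, and Lemma~\ref{char-O(b)}, especially the bijectivity of $I_{r-1}$ on a simple $B\in\mathcal{O}_\aa$ of order $r>0$.

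For Part~(1), simplicity of $M=\Ind_\aa^\WW(B)$: given a nonzero $\WW$-submodule $V'\subseteq M$, choose $X\in V'$ of minimal $d_{-1}$-degree $n$ and write $X=d_{-1}^n\otimes b+(\text{lower})$ with $0\ne b\in B$. If $n=0$, then simplicity of $B$ over $\aa$ gives $U(\aa)(1\otimes b)=1\otimes B$, and iteration of $d_{-1}$ yields $V'=M$. If $n\ge1$, I produce a nonzero element of $V'$ of strictly lower degree, contradicting minimality. Let $r=\ord_\aa(B)$, which by Lemma~\ref{char-O(b)} is the same for every nonzero vector of $B$. If $r\ge1$, apply $I_r$: since $I_r$ annihilates every $b_i$ appearing in $X$, a PBW computation using $[I_r,d_{-1}]=-rI_{r-1}$ gives
\[I_r X=-nr\,d_{-1}^{n-1}\otimes I_{r-1}b+(\text{lower in }d_{-1}),\]
whose leading coefficient is nonzero by Lemma~\ref{char-O(b)}(3). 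If $r=0$, then $B$ is a simple $\mathfrak{a}$-module and Lemma~\ref{char-O(w)}(a) supplies $r'$ with $d_{r'}$ bijective on $B$; the symmetric computation with $d_{r'+1}$ and $[d_{r'+1},d_{-1}]=-(r'+2)d_{r'}$ does the job. Either way, we obtain an element of $V'$ of $d_{-1}$-degree $<n$.

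For Part~(2), my plan is to construct a simple $\aa$-submodule $S\subseteq W$, then use Part~(1) to identify $W\cong\Ind_\aa^\WW S$, from which the remaining statements follow. By Lemma~\ref{eq-condition}, pick $0\ne v\in W$ with $\WW^{(s)}v=0$, and further, using the argument of Lemma~\ref{eq-condition}(c)$\Rightarrow$(b) via Lie's theorem on a finite-dimensional solvable quotient, arrange that $v$ is a common eigenvector of $d_0$ and $I_0$. Then $A:=U(\aa)v\in\mathcal{O}_\aa$ is $d_0$-semisimple with finite-dimensional weight spaces. \textbf{The main obstacle} is to extract a simple $\aa$-submodule $S\subseteq A$, since $A$ can behave like a Verma-type module. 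I handle this by considering the descending family of $\aa$-submodules $A_{r,s}=\{x\in A:I_{r+i}x=d_{s+i}x=0,\ \forall i\ge0\}$ (one checks directly from the bracket relations that this is an $\aa$-submodule whenever $s\ge r$), choosing $(r_0,s_0)$ minimal with $s_0\ge r_0$ and $A_{r_0,s_0}\ne0$. Then $A_{r_0,s_0}$ is a module over the finite-dimensional solvable Lie algebra $\aa_{r_0,s_0}=\aa/\LL^{(r_0,s_0)}$, on which $I_{r_0-1}$ (when $r_0>0$) and $d_{s_0-1}$ act injectively by minimality. Combining the $d_0$-weight decomposition (with finite-dimensional weight spaces) with Lie's theorem on the solvable action on a suitable finite-dimensional $\aa_{r_0,s_0}$-stable piece yields a simple $\aa$-submodule $S$, paralleling the construction in \cite{LZ3} for the $(\W,\mathfrak{a})$-case.

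Once $S\subseteq W$ is obtained, Frobenius reciprocity gives a nonzero $\WW$-map $\Ind_\aa^\WW S\to W$; by Part~(1) the source is simple and by hypothesis $W$ is simple, so the map is an isomorphism $W\cong\Ind_\aa^\WW S$. Essentialness of $S$ then follows from a degree-lowering argument inside $U(\aa)$: for any nonzero $\aa$-submodule $N\subseteq\Ind_\aa^\WW S$, take $X\in N$ of minimal $d_{-1}$-degree and apply $I_r$ or $d_{r'+1}$ (both in $U(\aa)$) as in Part~(1) to reduce to an element of $1\otimes S$; simplicity of $S$ then forces $S\subseteq N$. Hence $S$ is the unique minimal nonzero $\aa$-submodule of $W$, so $S=\Soc_\aa(W)$ and Parts~(3) and (4) are formal consequences: the identity $B=\Soc_\aa(\Ind_\aa^\WW B)$ comes from the same degree-lowering argument showing $1\otimes B$ is the unique simple $\aa$-submodule of $\Ind_\aa^\WW B$, and Part~(4) is immediate by applying $\Soc_\aa$ and $\Ind_\aa^\WW$ to an isomorphism in one direction or the other.
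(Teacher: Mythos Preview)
Your argument for Part~(1) is correct and matches the paper's proof exactly: choose an element of minimal $d_{-1}$-degree in a nonzero submodule and apply $I_r$ (or, when $r=0$, fall back on Lemma~\ref{char-O(w)}) to lower the degree.

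Your approach to Part~(2), however, has a genuine error. You claim that one can ``arrange that $v$ is a common eigenvector of $d_0$ and $I_0$'' by invoking Lie's theorem as in the proof of Lemma~\ref{eq-condition}(c)$\Rightarrow$(b). But that argument produces an eigenvector for $L^{(k)}$ acting on a \emph{finite-dimensional} cyclic module; here $d_0\notin\WW^{(s)}$ for $s\ge1$, and there is no finite-dimensional $d_0$-stable subspace available. In fact $d_0$ need not have any eigenvector in $W$ at all: in the Whittaker module $W_{\mu,\kappa}$ of Example~\ref{exmp-kappa} one has $W_{\mu,\kappa}\cong\C[d_{-1},d_0,\dots,d_{r-1}]v_{\mu,\kappa}$ by PBW, and $d_0$ acts (in the $d_0$-variable) as a pure shift, which is eigenvector-free. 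Consequently your module $A=U(\aa)v$ with $d_0$-weights bounded below cannot exist in such cases, and the subsequent extraction of a simple submodule via ``Lie's theorem on a suitable finite-dimensional $\aa_{r_0,s_0}$-stable piece'' collapses. Even granting a weight vector, a simple $\aa$-submodule $S\subseteq A$ with $\ord_\aa(S)=r_0>1$ would have $I_{r_0-1}$ (of positive degree) bijective, forcing weights of $S$ to be unbounded below---contradicting $S\subseteq A$.

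The paper's proof of Part~(2) sidesteps all of this by a single well-chosen move: pick $w\in W$ with \emph{minimal} $\ord_\aa(w)=r$ and set $M=U(\aa)w$. One checks that $\ord_\aa(m)=r$ for every nonzero $m\in M$, and then a direct computation (applying $I_{r+s-1}$ to $v=\sum_{i=0}^s d_{-1}^i w_i$ with $w_i\in M$, $w_s\ne0$) shows $\ord_\aa(v)=r+s$, so the map $\C[d_{-1}]\otimes M\to W$ is injective and $W\cong\Ind_\aa^\WW M$. Simplicity of $W$ then forces $M$ to be simple, and the same computation (which lands in $M$) gives essentialness. No eigenvector hypothesis, no Lie's theorem, no weight-space bookkeeping is needed.
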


\begin{proof}(1). Let $M$ be any nonzero submodule of $\Ind_{\aa}^{\mathfrak{\WW}}(B)=\C[d_{-1}]\otimes B$.
Choose $0\ne v=\sum_{i=0}^s d_{-1}^i\otimes v_i\in M$ with minimal
$s$, where $v_i\in B$. Denote $r=\ord_{\aa}(B)$. If $r=0$, the
result follows from Lemma \ref{char-O(w)}. Thus we assume that
$r>0$. If $s>0$, then $$0\ne I_{r} v\in -srd_{-1}^{s-1}\otimes
I_{r-1} v_s+\sum_{i=0}^{s-2} d_{-1}^i \otimes B\subset M,$$ which
contradicts to the minimality of $s$. So $s=0$, i.e., $v\in 1\otimes
B$. Therefore $M=\Ind_{\aa}^{\WW}(B)$, and $\Ind_{\aa}^{\WW}(B)$ is
simple.

(2). Fix some $0\ne w\in W$ with minimal $\ord_{\aa} w=r$. Let
$M=U({\aa})w$. Then $\ord_{\aa}M=r$, and
$W=\C[d_{-1}] M$. If $r=0$, the result follows from Lemma \ref{char-O(w)}. Thus we assume that $r>0$. %From Part (1) we see that $M$ is simple over
%$\aa$, and $W=\C[d_{-1}]\otimes M$.
For any $v=\sum_{i=0}^s d_{-1}^i w_i\in W$ with $w_s\ne 0$ and
$w_i\in M$ for $i=0,\ldots,s$, we have
 \begin{equation}\label{essential}0\ne I_{r+s-1} v=(-1)^s(r+s-1)(r+s)\cdots r
 I_{r-1}w_s\in M,\end{equation} \begin{equation}I_{r+i} v=0,\forall
i\ge s.\end{equation} Thus $\ord_{\aa}(v)=r+s$. So
$W=\C[d_{-1}]\otimes M$ and $W\cong \Ind_{\aa}^{\WW}M$, thus $M$ is
simple as $\aa$-module, and it is essential from (\ref{essential}).

Part (3) is an obvious consequence of (1) and (2). Part (4) follows
from (3).
\end{proof}
 \begin{example}\label{exmp-kappa} Consider some $r\in\N$ and set $$\mu=(\mu_{r},\mu_{r+1},\dots,
\mu_{2r}),\kappa=(\kappa_0, \kappa_1,\kappa_2,\ldots,\kappa_r)\in
\C^{r+1}.$$  Define the one dimensional
 $\LL^{(0,r)}$-module $\C v_{\mu,\kappa}$ with the action
 $$d_{2r+i} v_{\mu,\kappa}=I_{i+r} v_{\mu,\kappa}=0,\,\,\forall i\in \N,$$
 $$I_i v_{\mu,\kappa}=\kappa_iv_{\mu,\kappa}, \,\,\,d_{r+i} v_{\mu,\kappa}=\mu_{r+i}v_{\mu,\kappa},\,\,\forall k=0,1,\ldots, r.$$
 Then we have the induced $\WW$-module $W_{\mu,\kappa}=\Ind_{\LL^{(0,r)}}^{\WW}
 \C v_{\mu,\kappa}$.\end{example}

 \begin{lemma} The $\WW$-module   $W_{\mu,\kappa}$ is simple if and only if $(\mu_{2r},\mu_{2r-1},\kappa_r)\ne (0,0,0)$.
\end{lemma}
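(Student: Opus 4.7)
The plan is to first reduce to determining when $B_{\mu,\kappa}:=\Ind_{\LL^{(0,r)}}^{\aa}\C v_{\mu,\kappa}$ is simple as an $\aa$-module, via transitivity of induction $W_{\mu,\kappa}\cong\Ind_{\aa}^{\WW}B_{\mu,\kappa}$. Lemma \ref{char-O(ww)}(1) yields simplicity of $W_{\mu,\kappa}$ from simplicity of $B_{\mu,\kappa}$; conversely, since $W_{\mu,\kappa}\cong \C[d_{-1}]\otimes B_{\mu,\kappa}$ as vector spaces by PBW, any proper nonzero $\aa$-submodule of $B_{\mu,\kappa}$ induces a proper nonzero $\WW$-submodule of $W_{\mu,\kappa}$. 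So the question reduces to simplicity of $B_{\mu,\kappa}$, which has PBW basis $\{d_0^{a_0}d_1^{a_1}\cdots d_{r-1}^{a_{r-1}}v_{\mu,\kappa}:a_i\in\Z_+\}$.

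For the forward direction, assume $(\mu_{2r},\mu_{2r-1},\kappa_r)\ne (0,0,0)$. A routine induction on PBW length shows $I_n$ annihilates $B_{\mu,\kappa}$ for all $n>r$ and $d_n$ annihilates $B_{\mu,\kappa}$ for all $n>2r$, and hence $I_r$, $d_{2r}$, $d_{2r-1}$ act as the scalars $\kappa_r,\mu_{2r},\mu_{2r-1}$ respectively on the ``no-$d_0$'' subspace $\C[d_1,\ldots,d_{r-1}]v_{\mu,\kappa}$. For each $k\in\{0,1,\ldots,r-1\}$ I construct a ladder pair $(Z_k,\lambda_k)\in U(\aa)\times\C$: take $(I_{r-k},\kappa_{r-k})$ if $\kappa_r\ne 0$; take $(d_{2r-k},\mu_{2r-k})$ if $\kappa_r=0$ and $\mu_{2r}\ne 0$; and take $(d_{2r-k-1},\mu_{2r-k-1})$ if $\kappa_r=\mu_{2r}=0$ and $\mu_{2r-1}\ne 0$. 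In each case the key commutators $[I_{r-k},d_k]=-(r-k)I_r$, $[d_{2r-k},d_k]=(2k-2r)d_{2r}$, and $[d_{2r-k-1},d_k]=(2k+1-2r)d_{2r-1}$ become nonzero scalars on the no-$d_0$ subspace, while $[Z_k,d_i]$ for $i>k$ produces an $I_n$ or $d_n$ annihilating $B_{\mu,\kappa}$ by the high-degree vanishing. Given a nonzero $u$ in a submodule, applying $(Z_0-\lambda_0)^{N_0}$ (with $N_0$ the top $d_0$-degree of $u$) acts as an $N_0$-fold finite-difference operator in $d_0$ (of the form $\lambda_0(\text{shift}-1)$) whose iterate extracts the leading $d_0^{N_0}$-coefficient, a nonzero element of the no-$d_0$ subspace; successive $(Z_k-\lambda_k)^{N_k}$ for $k=1,\ldots,r-1$ are pure Weyl-algebra-type derivations in $d_k$ on $\C[d_k,\ldots,d_{r-1}]v_{\mu,\kappa}$ that strip $d_k$ without reintroducing lower $d_i$. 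After $r$ iterations we reach a nonzero scalar multiple of $v_{\mu,\kappa}$, so the submodule is all of $B_{\mu,\kappa}$.

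For the converse, if $(\mu_{2r},\mu_{2r-1},\kappa_r)=(0,0,0)$, then the 1-dimensional $\LL^{(0,r)}$-module $\C v_{\mu,\kappa}$ extends (by setting $d_{r-1}v'=0$, say) to a 1-dimensional representation $\C v'$ of the strictly larger subalgebra $\LL^{(0,r-1)}\supsetneq \LL^{(0,r)}$: the required vanishing on $[\LL^{(0,r-1)},\LL^{(0,r-1)}]$ reduces exactly to the three hypothesized vanishings via $[d_{r-1},d_r]=d_{2r-1}$, $[d_{r-1},d_{r+1}]=2d_{2r}$, and $[d_{r-1},I_1]=I_r$. Transitivity of induction then yields a surjective $\WW$-module map $W_{\mu,\kappa}\twoheadrightarrow \Ind_{\LL^{(0,r-1)}}^{\WW}\C v'$ whose target has a strictly smaller PBW basis (missing the generator $d_{r-1}$), so the kernel is nonzero---in particular it contains $d_{r-1}v_{\mu,\kappa}$---and $W_{\mu,\kappa}$ is not simple. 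The main technical obstacle is the forward direction's case-by-case bookkeeping: one must separate the finite-difference behavior of $(Z_0-\lambda_0)$ on $d_0$ from the pure derivation behavior of the later $(Z_k-\lambda_k)$ on the $d_0$-free subspace, and check in each of the three cases that the sequential stripping really does not reintroduce any previously eliminated generator, which is what the high-degree vanishing of $I_n$ and $d_n$ ensures.
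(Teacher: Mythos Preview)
Your proof is correct but takes a genuinely different route from the paper's. For the forward direction, the paper argues externally: when $(\mu_{2r},\mu_{2r-1})\ne(0,0)$ it cites the Virasoro Whittaker simplicity results of \cite{MZ} and \cite{LGZ} to conclude that $W_{\mu,\kappa}$ is already simple over $\W$; when those vanish but $\kappa_r\ne 0$ it artificially sets $z_1=z_2=0$, $z_3=1$ and invokes Theorem~\ref{thm-whittaker-1} to get simplicity over $\LL$, hence over $\WW$. By contrast, you reduce to the $\aa$-module $B_{\mu,\kappa}$ via Lemma~\ref{char-O(ww)} and run an explicit, self-contained PBW-stripping argument with your ladder operators $(Z_k-\lambda_k)$. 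The paper's approach is much shorter but leans on two substantial earlier theorems; yours is elementary and exposes the combinatorial mechanism directly, at the cost of the case-by-case bookkeeping you flag. For the converse, your extension of the character to $\LL^{(0,r-1)}$ is exactly what makes the paper's one-line claim that $d_{r-1}v_{\mu,\kappa}$ generates a proper $\WW$-submodule transparent. One small imprecision worth fixing: your blanket assertion that $d_{2r-1}$ acts as the scalar $\mu_{2r-1}$ on $\C[d_1,\ldots,d_{r-1}]v_{\mu,\kappa}$ fails when $\mu_{2r}\ne 0$, since $[d_{2r-1},d_1]=(2-2r)d_{2r}$ contributes a nontrivial term; but you only invoke $d_{2r-1}$ in your Case~3, where $\mu_{2r}=0$ is assumed, so the argument itself is unaffected.
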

\begin{proof}If $(\mu_{2r},\mu_{2r-1})\ne (0,0)$, then from [MZ] or [LGZ], we know that $W_{\mu,\kappa}$ is a simple $\W$-module, hence a simple $\WW$-module.
Now suppose that $(\mu_{2r},\mu_{2r-1})=(0,0)$ and $\kappa_r\ne 0$. We
make $\C v_{\mu,\kappa}$ to be a $\LL^{(0,r)}+\sum_{i=1}^3\C z_i$
module by $z_1 =z_2=0, z_3=1$. Then by Theorem
\ref{thm-whittaker-1}, $\Ind_{\LL^{(0,r)}+\sum_{i=1}^3\C
z_i}^{\LL}\C v_{\mu,\kappa}$ is a simple $\LL$-module. Thus
$\Ind_{\LL^{(0,r)}+\sum_{i=1}^3\C z_i}^{\WW+\sum_{i=1}^3 \C z_i}\C
v_{\mu,\kappa}$ is a simple $\WW+\sum_{i=1}^3 \C z_i$ module, that
is, $W_{\mu,\kappa}$ is a simple $\WW$-module. On the other hand, if
$(\mu_{2r},\mu_{2r-1},\kappa_r)=(0,0,0)$, then it is straightforward
to verify that $d_{r-1}v_{\mu,\kappa}$ generates a proper $\WW$
submodule.
 \end{proof}

\subsection{The ``embedding trick"}
Let $W\in \mathcal{O}_{\WW}$. Then $W$
 can be naturally regarded as a  module over $\C[[t]]\frac{d}{d t}+\C[[t]]$.
 Regard $\C[t,(t+\lambda)^{-1}]\frac{d}{ d t}+\C[t,(t+\lambda)^{-1}]$ as a subalgebra of $\C[[t]]\frac{d}{d t}+\C[[t]]$. We will use the expression
 $$(t+\lambda)^{m}=\sum_{i=0}^{\infty}\binom{m}{i}\lambda^{m-i}t^i
 \in\C[[t]], \,\forall\, m\in\Z,\lambda\in \C^*,$$ where $\binom{m}{i} =\frac{m\cdot (m-1)\cdots
                                        (m-i+1)}{i!}.$
Let $$\sigma_{\lambda}:\LL \rightarrow
\C[t,(t+\lambda)^{-1}]\frac{d}{ d t}+\C[t,(t+\lambda)^{-1}]$$ be the
epimorphism of Lie algebras defined by
$\sigma_{\lambda}(f(t)\frac{d}{dt})=f(t+\lambda)\frac{d}{dt}$ and
$\sigma_{\lambda}(z_i)=0,i=1,2,3$. So we have the $\LL$-module
$W[\lambda]=W$ with the action
\begin{equation}z_i\circ w=0, i=1,2,3,\end{equation}
\begin{equation}(f(t)\frac{d}{d t})\circ w =\sigma_{\lambda}(f(t)\frac{d}{d t})v=(f(t+\lambda)\frac{d}{dt}) v,
\end{equation}
\begin{equation}f(t)\circ w =\sigma_{\lambda}(f(t))v=f(t+\lambda) v, \forall\,\,
f(t)\in \C[t,t^{-1}], w\in W.\end{equation} We call the above method
to make $\WW$-module $W$ into $\LL$-module $W[\lambda]$ the
``embedding trick".

\begin{remark}\label{WW equivalent}Note that $\sigma_{\lambda}|_{\WW}$ is a Lie algebra automorphism. Hence for any $W\in \mathcal{O}_{\WW}$, $W[\lambda]$ is equivalent to $W$ as $\WW$-modules. It is easy to see that $W[\lambda]\notin \mathcal{O}_{\LL}$ for any $\lambda\in\C^*$ unless $W$ is trivial.\end{remark}

\begin{proposition}\label{thm10}Suppose that $W, W'\in \mathcal{O}_{\WW}$
are simple and nontrivial,  $\lambda,\lambda'\in \C^*$.
\begin{enumerate}[$($a$)$] \item The module $W[\lambda]$ is a simple $\LL$-module. \item We have $W[\lambda]\cong
\Ind_{\LL[\lambda]}^{\LL}\Soc_{\aa}(W)= \C[d_0]\otimes
\Soc_{\aa}(W)$, where $\Soc_{\aa}(W)$ is considered as a
${\LL[\lambda]}$-module.
\item We have $W[\lambda]\cong W'[\lambda']$ as $\LL$-modules if and only if
$\lambda=\lambda'$ and $W\cong W'$ as $\WW$-modules.
\end{enumerate}
\end{proposition}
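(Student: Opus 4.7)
For (a), I observe that the restriction $\sigma_\lambda|_{\WW}$ is a Lie algebra automorphism of $\WW$ (translation by $\lambda$, as noted in the Remark before this proposition). Hence a subspace of $W$ is a $\WW$-submodule under the original action if and only if it is a $\WW$-submodule of $W[\lambda]$. Since $W$ is simple as a $\WW$-module, so is $W[\lambda]$; because $\WW \subset \LL$, every $\LL$-submodule is automatically a $\WW$-submodule, so $W[\lambda]$ is simple as an $\LL$-module.

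For (b), I first check that $\Soc_\aa(W) \subset W[\lambda]$ is stable under $\LL[\lambda]$: the generators act through $\sigma_\lambda$ as
$$\sigma_\lambda(t^i) = (t+\lambda)^i = \sum_{k \geq 0}\binom{i}{k}\lambda^{i-k}I_k, \quad \sigma_\lambda\bigl(t^i(t-\lambda)\tfrac{d}{dt}\bigr) = t(t+\lambda)^i\tfrac{d}{dt} = \sum_{k \geq 0}\binom{i}{k}\lambda^{i-k}d_k,$$
which lie in a completion of $U(\aa)$; Condition A on $W$ ensures these sums act finitely on any vector in $\Soc_\aa(W)$, so $\Soc_\aa(W)$ is indeed $\LL[\lambda]$-stable. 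Frobenius reciprocity then yields an $\LL$-linear map $f \colon \Ind_{\LL[\lambda]}^{\LL}\Soc_\aa(W) \to W[\lambda]$, surjective by part (a). For injectivity, note that $\LL = \LL[\lambda] \oplus \C d_0$ as vector spaces (in the quotient the relations $d_i \equiv \lambda d_{i-1}$ give $d_i \equiv \lambda^i d_0$, and the functional $d_j \mapsto \lambda^j$ shows $d_0 \notin \LL[\lambda]$), so PBW identifies $\Ind_{\LL[\lambda]}^{\LL}\Soc_\aa(W) = \C[d_0] \otimes \Soc_\aa(W)$ as a vector space. Meanwhile by Lemma \ref{char-O(ww)}, $W = \C[d_{-1}] \otimes \Soc_\aa(W)$; filtering by degree in $d_{-1}$, the operator $\sigma_\lambda(d_0) = d_0 + \lambda d_{-1}$ raises the filtration degree by exactly one, so $f(d_0^k \otimes v) = (d_0 + \lambda d_{-1})^k v$ has leading term $\lambda^k d_{-1}^k \otimes v$ in $\mathrm{gr}_k W$. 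Since $\lambda \neq 0$, $f$ induces isomorphisms on all associated gradeds, hence is itself an isomorphism.

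For (c), sufficiency is immediate, since a $\WW$-iso $W \cong W'$ becomes an $\LL$-iso $W[\lambda] \cong W'[\lambda]$ via the same underlying map. For necessity, given an $\LL$-iso $\Phi \colon W[\lambda] \to W'[\lambda']$ and writing $t$ for the original $I_1$-action on either side, the element $I_1 \in \LL$ acts as $t+\lambda$ on $W[\lambda]$ and as $t+\lambda'$ on $W'[\lambda']$, so equivariance of $\Phi$ forces $\Phi \circ t = (t+\mu)\circ \Phi$ where $\mu = \lambda'-\lambda$. The delicate point is to show $\mu = 0$: if $\mu \neq 0$, for any $v \in W$ pick $n$ with $t^nv = 0$ (Condition A on $W$), giving $(t+\mu)^n\Phi(v) = 0$; also pick $m$ with $t^m\Phi(v) = 0$ (Condition A on $W'$). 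Since $X^m$ and $(X+\mu)^n$ are coprime in $\C[X]$, Bezout produces polynomials $a, b$ with $a(X)X^m + b(X)(X+\mu)^n = 1$; applying this operator identity to $\Phi(v)$ gives $\Phi(v) = 0$, forcing $\Phi = 0$ and contradicting bijectivity. Hence $\lambda = \lambda'$; then $\Phi$ is $\WW$-linear for the $\sigma_\lambda|_{\WW}$-twisted actions, but since $\sigma_\lambda|_{\WW}$ is an automorphism of $\WW$, this is equivalent to $\Phi$ being $\WW$-linear for the original actions on $W$ and $W'$, yielding $W \cong W'$ as $\WW$-modules.
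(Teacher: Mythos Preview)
Your arguments for (a) and (b) are correct. Part (a) is exactly the paper's argument. For (b) the paper only says ``the rest follows from the definition of $W[\lambda]$ and Lemma \ref{char-O(ww)}'', whereas you supply an explicit filtration argument; your version is more detailed but along the same lines, and the leading-term computation $(d_0+\lambda d_{-1})^k v \equiv \lambda^k d_{-1}^k v \pmod{F_{k-1}}$ is the right way to see injectivity.

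Part (c), however, has a genuine gap. You write ``pick $n$ with $t^n v=0$ (Condition A on $W$)'' where $t$ denotes the action of $I_1$. But Condition A says that the \emph{Lie algebra element} $I_n=t^n\in\WW$ annihilates $v$ for large $n$; it does \emph{not} say that the $n$-th power of the operator $I_1$ annihilates $v$. In $U(\WW)$ we have $I_1^n\neq I_n$, and in fact $I_1$ need not be locally nilpotent at all: by Lemma \ref{char-O(b)}(3), whenever $\ord_{\aa}(\Soc_{\aa}(W))=2$ the element $I_1$ acts bijectively on $\Soc_{\aa}(W)$, so your Bezout step cannot get started. There is a second, smaller inaccuracy: $I_1$ acts on $W[\lambda]$ as $I_1+\lambda I_0$ in the original action, and $I_0$ acts by some scalar $c$ on the simple module $W$, not necessarily by $1$; so the shift you call $\mu$ is really $(\lambda'-\lambda)c$, which gives no information when $c=0$.

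The paper's argument bypasses both issues by working with annihilator ideals rather than a single operator. For $0\neq v\in W[\lambda]$ one has
$\bigl(\C[t,t^{-1}](t-\lambda)^k\tfrac{d}{dt}+\C[t,t^{-1}](t-\lambda)^k+\sum_i\C z_i\bigr)\circ v=0$
for some $k$ (this is the correct translation of Condition A through $\sigma_\lambda$), and the analogous ideal at $\lambda'$ annihilates $\psi(v)$. If $\lambda\neq\lambda'$, the two ideals together generate $\LL$, forcing $\LL\circ\psi(v)=0$, contradicting nontriviality of $W'$. This is the Bezout/coprimality idea you were reaching for, but applied at the level of Laurent-polynomial ideals in $t$ rather than to powers of a single endomorphism.
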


\begin{proof} Part (a) follows from Remark \ref{WW equivalent}.

(b). For any $w\in \Soc_{\aa}(W)$, any $n\in\Z$, we have
$$(t^n)\circ w=(t+\lambda)^nw=\sum_{i=0}^{\infty}\binom ni \lambda^{m-i}t^{i}w,$$
$$((t-\lambda)t^n\frac d{dt})\circ w=(t(t+\lambda)^n\frac d{dt})w=\sum_{i=1}^{\infty}\binom ni \lambda^{m-i}(t^{i+1}\frac d{dt})w.$$
So  $\Soc_{\aa}(W)$ is a $\LL[\lambda]$-module. The rest
follows from the definition of $W[\lambda]$ and Lemma \ref{char-O(ww)}.

The sufficiency of Part (c) is trivial. Now suppose that
$\psi:W[\lambda]\rightarrow W'[\lambda']$ is a $\WW$-module
isomorphism. Suppose that $\lambda\ne \lambda'$.
 For any $0\ne v\in W[\lambda]$, there exists some $k\in \N$ such that
 $(\C[t,t^{-1}](t-\lambda)^k\frac{d}{d t}+\C[t,t^{-1}](t-\lambda)^k+\C z_1+\C z_2+\C z_2 )\circ v=0.$
 Hence \begin{equation}\label{e1}(\C[t,t^{-1}](t-\lambda)^k\frac{d}{d t}+\C[t,t^{-1}](t-\lambda)^k+\sum_{i=1}^3\C z_i)\circ\psi(v)=0.\end{equation} And also
 there exists some $k'\in \N$, such that
 \begin{equation}\label{e2}(\C[t,t^{-1}](t-\lambda')^{k'}\frac{d}{d t}+\C[t,t^{-1}](t-\lambda')^{k'}+\sum_{i=1}^3\C z_i )\circ\psi(v)=0.\end{equation}
 From (\ref{e1}) and (\ref{e2}), we have $\LL \circ \psi(v)=0$, a contradiction. So $\lambda=\lambda'$.
 For any $f(t)\frac{d}{d t}+g(t)\in \mathcal{W}$ and $v\in W$, we have
 $$\psi((f(t)\frac{d}{d t}+g(t)) v)=\psi((f(t-\lambda)\frac{d}{d t}+g(t-\lambda))\circ v)$$ $$=(f(t-\lambda)\frac{d}{d t}+g(t-\lambda))\circ \psi(v)=(f(t)\frac{d}{d t}+g(t))  \psi(v)\in W'.$$ Thus $\psi:W\rightarrow W'$ is a $\mathcal{W}$-module isomorphism. \end{proof}

\begin{example}\label{examp-Omega}Let $\mathcal{A}(b_1,b_2)$ be the
Verma module over $\WW$ with the highest weight vector $v_{b_1,b_2}$
of highest weight $(b_1,b_2)\in\C^2$, i.e., $d_0 v_{b_1,b_2}=b_1
v_{b_1,b_2}$, $I_0v_{b_1,b_2}=b_2v_{b_1,b_2}$, and
$I_iv_{b_1,b_2}=d_iv_{b_1,b_2}=0$ for all $i>0$. Denote
$\Omega(\lambda;b_1,b_2)=\mathcal{A}(b_1,b_2)[\lambda]$. Then
$\Omega(\lambda;b_1,b_2)$ is simple if and only if
$(b_1,b_2)\ne(0,0)$.

  For any $\lambda\in C^*$, the action of $\LL$ on $\Omega(\lambda;b_1,b_2)=\C[d_0]\circ v_{b_1,b_2}$ is
$$\aligned z_1&=z_2=z_3=0,\\ d_m \circ (d_0^i\circ  v_{b_1,b_2})=&(d_0-m)^i\circ  (d_m\circ v_{b_1,b_2})\\
=&(d_0-m)^i\circ  (((t+\lambda)^{m+1}\frac{d}{d t}) v_{b_1,b_2})\\
=&(d_0-m)^i\circ  (\lambda^{m+1}d_{-1}+(m+1)\lambda^{m}d_0) v_{b_1,b_2})\\
=&(d_0-m)^i\circ  ((m\lambda^m d_{0} +\lambda^{m}(d_0+\lambda d_{-1}) v_{b_1,b_2})\\
=&\lambda^m((d_0-m)^i(mb_1+d_{0}))\circ  v_{b_1,b_2},\,\,\forall
f(t)\in \C[t],\endaligned$$
$$\aligned I_m \circ  (d_0^i\circ v_{b_1,b_2})=&(d_0-m)^i\circ (I_m\circ  v_{b_1,b_2})\\
=&(d_0-m)^i\circ (((t+\lambda)^{m+1}) v_{b_1,b_2})\\
=&(\lambda^{m+1}b_2)(d_0-m)^i\circ  v_{b_1,b_2},\forall i\in
\Z.\endaligned$$
We see that,
as $\Vir$-modules, $\Omega(\lambda;b_1,b_2)$ is isomorphic to
$\Omega(\lambda, b_1+1)$ defined in \cite{LZ2}.\end{example}

 \begin{example}\label{examp-1}  Let $W_{\mu,\kappa}$ be as defined in Example \ref{exmp-kappa} with $r=1$.
 Then $W_{\mu,\kappa}[\lambda]=\C[d_0,d_{-1}] v_{\mu,\kappa}$
 is simple if and only if $\mu\ne 0$ or $\kappa_1\ne 0$.
  Take $$\{d_0^i\circ (d_0^jv_{\mu,\kappa})|i,j\in \Z_+\}$$ as a basis of $W_{\mu,\kappa}[\lambda]$.  Then action
 of $\LL$ on this basis is

 $$\aligned z_1&=z_2=z_3=0,\\ &d_m \circ (d_0^i\circ (d_0^jv_{\mu,\kappa}))\\
=&\lambda^{m}((d_0-m)^id_0)\circ (
d_0^jv_{\mu,\kappa})+m\lambda^{m}(d_0-m)^i\circ
(d_0^{j+1}v_{\mu,\kappa})\\
&+\frac{m^2+m}{2}\lambda^{m-1}\mu_1(d_0-m)^i\circ
((d_0-1)^jv_{\mu,\kappa})\\
&+\frac{m^3-m}{6}\lambda^{m-2}\mu_2(d_0-m)^i\circ
(d_0-2)^jv_{\mu,\kappa}),\endaligned$$

 $$\aligned &I_m \circ (d_0^i\circ (d_0^jv_{\mu,\kappa}))\\
%=&(d_0-m)^i \circ
%(\lambda^mI_0+m\lambda^{m-1}I_1)d_0^jv_{\mu,\kappa}\\
 =&(d_0-m)^i
\circ
((\lambda^m\kappa_0d_0^j+m\lambda^{m-1}(d_0-1)^j\kappa_1)v_{\mu,\kappa}).\endaligned$$

\end{example}
\subsection{Simplicity of tensor product modules}
In this subsection we will use Theorem 7 to construct more simple
$\LL$-modules by taking tensor product of simple modules constructed
in this paper.

\begin{lemma}\label{lemma-13} Let $W_1,W_2,\ldots, W_n\in \mathcal{O}_{\WW}$ be simple and
nontrivial, and $\lambda_1,\lambda_2,\ldots,\lambda_n\in \C^*$
pairwise distinct. Then the $\LL$-module
$$W_1[\lambda_1]\otimes W_2[\lambda_2]\otimes\cdots\otimes
W_n[\lambda_n]$$ is simple.
\end{lemma}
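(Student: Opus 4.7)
The plan is to proceed by induction on $n$. The base case $n=1$ is exactly Proposition 27(a). For the inductive step, let $V_1 = W_1[\lambda_1]\otimes\cdots\otimes W_{n-1}[\lambda_{n-1}]$, which is simple by the inductive hypothesis, and let $V_2 = W_n[\lambda_n]$. I will verify condition (1) of Theorem 7, namely that $\ann_{\LL}(v)+\ann_{\LL}(S) = \LL$ for all $v \in V_1$ and all finite subsets $S\subset V_2$; Theorem 7(b) then delivers simplicity.

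First I would describe sufficiently large subspaces sitting inside each annihilator. Since $W_i \in \mathcal{O}_{\WW}$, every $v_i \in W_i$ satisfies $\WW^{(N_i)}v_i = 0$ for some $N_i \in \N$. Interpreting this through the map $\sigma_{\lambda_i}$ and using that $t+\lambda_i$ is a unit in $\C[[t]]$ (because $\lambda_i\in\C^*$), one sees that $\ann_{\LL}(v_i)$ contains every $x = f(t)\frac{d}{dt}+g(t)+c_1z_1+c_2z_2+c_3z_3$ for which the Laurent polynomials $f,g$ vanish to sufficiently high order at $t=\lambda_i$. Writing $v$ as a finite sum of simple tensors and using the Leibniz action of $\LL$ on tensor products, there exists $K \in \N$ such that $\ann_{\LL}(v)$ contains every element of $\LL$ whose Witt and Heisenberg Laurent coefficients vanish to order at least $K$ at each of $\lambda_1,\ldots,\lambda_{n-1}$. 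An analogous statement holds for $\ann_{\LL}(S)$, giving a subspace defined by vanishing to order at least $K'$ at $\lambda_n$, for some $K'\in\N$.

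Next I would invoke the Chinese Remainder Theorem in $\C[t,t^{-1}]$. Since $\lambda_n \notin \{\lambda_1,\ldots,\lambda_{n-1}\}$, the ideals $(t-\lambda_1)^K\cdots(t-\lambda_{n-1})^K\cdot\C[t,t^{-1}]$ and $(t-\lambda_n)^{K'}\cdot\C[t,t^{-1}]$ are coprime, so their sum is all of $\C[t,t^{-1}]$. Splitting the Witt part and the Heisenberg part of an arbitrary element of $\LL$ according to this decomposition (the central elements $z_1,z_2,z_3$ already lie in both annihilators) produces the required identity $\ann_{\LL}(v)+\ann_{\LL}(S)=\LL$, and Theorem 7(b) closes the induction.

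The main obstacle should be the first step: correctly tracking how the grading condition $\WW^{(N_i)}v_i=0$ in $W_i$ translates, via the twist $\sigma_{\lambda_i}$ (which introduces power series expansions of $(t+\lambda_i)^{-1}$) and via the Leibniz rule on tensor products, into a large explicit subalgebra of $\ann_{\LL}(v)$ described purely by vanishing orders of Laurent polynomials at $\lambda_1,\ldots,\lambda_{n-1}$. Once this translation is in hand, coprimality of ideals in $\C[t,t^{-1}]$ and the machinery of Theorem 7 finish the argument with no further work.
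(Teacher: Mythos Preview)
Your proposal is correct and follows essentially the same route as the paper's proof: induction on $n$, with the inductive step handled by verifying condition (1) of Theorem~\ref{thm1} via the observation that $\ann_{\LL}(v)$ contains $\C[t,t^{-1}]p(t)^{k_0}\frac{d}{dt}+\C[t,t^{-1}]p(t)^{k_0}+\sum_i\C z_i$ for $p(t)=\prod_{i<n}(t-\lambda_i)$ and a suitable $k_0$, together with the coprimality $\C[t,t^{-1}]p(t)^{k_0}+\C[t,t^{-1}](t-\lambda_n)^{k_0}=\C[t,t^{-1}]$. Your description of the translation step (how $\WW^{(N_i)}v_i=0$ becomes a vanishing-order condition at $t=\lambda_i$ under $\sigma_{\lambda_i}$) is in fact more explicit than the paper's, which simply asserts the existence of such $k_0$.
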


\begin{proof}We will prove the lemma by induction on $n$. It is obvious for $n=1$. Now suppose that $n>1$.
Denote $V_1= W_1[\lambda_1]\otimes
W_2[\lambda_2]\otimes\cdots\otimes W_{n-1}[\lambda_{n-1}]$ and
$V_2=W_n[\lambda_n]$.

From the inductive hypothesis, $V_1$ is simple.

Take $p(t)=(t-\lambda_1)\ldots (t-\lambda_{n-1})$. From the
definition of $W[\lambda]$, we see that for any finite subset
$v_1\in V_1$, and $S_2\subset V_2$, there exists some $k_0\in \N$
such that
$$(\C[t,t^{-1}]p(t)^{k_0}\frac{d}{d
t}+\C[t,t^{-1}]p(t)^{k_0}+\C z_1+\C z_2+\C z_2 )\circ v_1=0,$$
$$(\C[t,t^{-1}](t-\lambda_n)^{k_0}\frac{d}{d
t}+\C[t,t^{-1}](t-\lambda_n)^{k_0}+\C z_1+\C z_2+\C z_2 )\circ
S_2=0.$$
 Note that
$\C[t,t^{-1}]p(t)^{k_0}+\C[t,t^{-1}](t-\lambda_n)^{k_0}=\C[t,t^{-1}].$
From Theorem \ref{thm1}, we know that $V_1\otimes
V_2=W_1[\lambda_1]\otimes W_2[\lambda_2]\otimes\cdots\otimes
W_n[\lambda_n]$ is simple.
\end{proof}

\begin{theorem}\label{nonweight-tensor} Let $W_1,W_2,\ldots, W_n\in \mathcal{O}_{\WW}$
be simple and nontrivial, $V\in \mathcal{O}_{\LL}$ be
simple, and $\lambda_1,\lambda_2,\ldots,\lambda_n\in \C^*$ be
pairwise distinct. Then the $\LL$-module
$$V\otimes W_1[\lambda_1]\otimes W_2[\lambda_2]\otimes\cdots\otimes
W_n[\lambda_n]$$ is simple.\end{theorem}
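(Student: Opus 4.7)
The plan is to apply Theorem~\ref{thm1} with $V_1 = V$ and $V_2 = W := W_1[\lambda_1]\otimes W_2[\lambda_2]\otimes\cdots\otimes W_n[\lambda_n]$, verifying condition~(1) of that theorem. By Lemma~\ref{lemma-13}, $W$ is already simple, and $V$ is simple by hypothesis, so once condition~(1) is established the conclusion is immediate.

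First I would exhibit large subalgebras sitting inside each annihilator. Since $V \in \mathcal{O}_{\LL}$, every $v \in V$ satisfies $\LL^{(s)} \subset \ann_{\LL}(v)$ for some $s = s(v) \in \N$. For the tensor product $W$, each factor $W_i[\lambda_i]$ is the pullback of the $\WW$-action along the map $\sigma_{\lambda_i}$ of Subsection~5.2, and the central elements $z_1, z_2, z_3$ act as zero on every $W_i[\lambda_i]$ by construction. Combining the fact that each $W_i \in \mathcal{O}_{\WW}$ with the observation that $\sigma_{\lambda_i}$ sends the element $(t-\lambda_i) \in \LL$ to $t \in \WW$, I would show that for any finite $S \subset W$ there exists $K \in \N$ such that
$$J_K \;:=\; \C[t,t^{-1}]\, p(t)^{K}\,\tfrac{d}{dt} \;+\; \C[t,t^{-1}]\, p(t)^{K} \;+\; \C z_1 + \C z_2 + \C z_3 \;\subset\; \ann_{\LL}(S),$$
where $p(t) = \prod_{i=1}^n (t-\lambda_i)$. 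This is essentially the same annihilator estimate used in the proof of Lemma~\ref{lemma-13}.

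The crux is then to verify the purely algebraic identity $\LL^{(s)} + J_K = \LL$. Since the central elements lie in $J_K$ and the $\frac{d}{dt}$-component is handled in exactly the same way, this reduces to showing
$$t^{s}\,\C[t] \;+\; \C[t,t^{-1}]\, p(t)^{K} \;=\; \C[t,t^{-1}].$$
The key input is that every $\lambda_i$ is \emph{nonzero}, so $p(0) = \prod_i (-\lambda_i) \ne 0$ and hence $\gcd\bigl(t,\, p(t)^{K}\bigr) = 1$ in $\C[t]$. This makes $t$ (and therefore $t^{s}$) a unit in the finite-dimensional quotient $R := \C[t,t^{-1}]/(p(t)^{K})$, so multiplication by $t^{s}$ is an automorphism of $R$; coprimality likewise gives $\C[t] \cap p(t)^{K}\C[t,t^{-1}] = p(t)^{K}\C[t]$, so the natural map $\C[t]\to R$ is a surjection between two $\C$-spaces of dimension $nK$. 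Composing, $t^{s}\,\C[t] \twoheadrightarrow R$, which is exactly the identity wanted.

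With $\LL^{(s)} + J_K = \LL$ in hand, condition~(1) of Theorem~\ref{thm1} is satisfied, and the simplicity of $V \otimes W$ follows. I expect the main obstacle to be precisely the algebraic identity in the preceding paragraph: it is straightforward once one notices that both hypotheses on $(\lambda_1,\dots,\lambda_n)$ are needed, pairwise distinctness (already used in Lemma~\ref{lemma-13}) and non-vanishing (needed here to ensure $t$ and $p(t)$ are coprime), and either failing would break the argument.
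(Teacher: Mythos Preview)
Your proposal is correct and follows essentially the same route as the paper: set $V_1=V$, $V_2=W_1[\lambda_1]\otimes\cdots\otimes W_n[\lambda_n]$, invoke Lemma~\ref{lemma-13} for simplicity of $V_2$, exhibit the two annihilator ideals, and apply condition~(1) of Theorem~\ref{thm1}. The only difference is that the paper compresses your coprimality argument for $t^{s}\C[t]+\C[t,t^{-1}]p(t)^{K}=\C[t,t^{-1}]$ into a single ``Clearly''; your expanded justification is a welcome clarification of exactly where the hypothesis $\lambda_i\neq 0$ is used.
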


\begin{proof} Denote $V_1=V$ and $V_2=W_1[\lambda_1]\otimes
W_2[\lambda_2]\otimes\cdots\otimes W_n[\lambda_n]$. From Lemma
\ref{lemma-13}, $V_2$ is simple. Take $p(x)=(t-\lambda_1)\ldots
(t-\lambda_{n})$. Then see that for any finite subsets $v \subset
V_1$, and $S_2\subset V_2$, there exists some $k_0>0$ such that

$$(\C[t]t^{k_0}\frac{d}{d t}+\C[t]t^{k_0}\frac{d}{d t}) v=0,$$
$$(\C[t,t^{-1}]p(t)^{k_0}\frac{d}{d
t}+\C[t,t^{-1}]p(t)^{k_0}+\C z_1+\C z_2+\C z_2 )\circ S_2=0.$$
Clearly, $(\C[t]t^{k_0}+(\C[t,t^{-1}]p(t)^{k_0}=\C[t,t^{-1}].$ By
Theorem \ref{thm1}, we know $V_1\otimes V_2= V\otimes
W_1[\lambda_1]\otimes W_2[\lambda_2]\otimes\cdots\otimes
W_n[\lambda_n]$ is simple.
\end{proof}

\begin{example} Let  $\lambda_1,\ldots,
\lambda_n\in \C^*$ are pairwise distinct, $a_1,\ldots, a_n,
b_1,$ $\ldots, b_n \in\C$, and $V\in \mathcal{O}_{\LL}$ is simple. From
Example \ref{examp-Omega} and using Theorem \ref{nonweight-tensor},
we obtain the simple module $\Omega(\lambda_1;a_1,b_1)\otimes
\Omega(\lambda_2;a_1,b_2)\otimes\cdots\otimes \Omega(\lambda_n;a_n,
b_n)\otimes V$ if each $(a_i,b_i)\ne (0,0)$. Further, if we take
$b_i=0$ for all $i$, and $V\in \mathcal{O}_{\V}$, then
$\Omega(\lambda_1;a_1,0)\otimes
\Omega(\lambda_2;a_1,0)\otimes\cdots\otimes \Omega(\lambda_n;a_n,
0)\otimes V$ is also a simple $\V$-module since $\H$ acts trivially.
Such a simple $\Vir$-module is obtained in [TZ1, TZ2].
\end{example}

\subsection{Isomorphism classes}
In this subsection we will determine the isomorphism classes of the simple tensor product $\LL$-modules discussed in Theorem 34.

\begin{theorem}Suppose that $W_1[\lambda_1]\otimes
W_2[\lambda_2]\otimes\cdots\otimes W_n[\lambda_n]\otimes V$ and
$W_1'[\lambda_1']\otimes W_2'[\lambda_2']\otimes\cdots\otimes
W_m'[\lambda_m']\otimes V'$ are isomorphic simple $\LL$-modules as
in Theorem \ref{nonweight-tensor}. Then $m=n$, $V\cong V'$,
$\lambda_i=\lambda_i'$, $W_i\cong W_i'$ for $i=1,2,\ldots,n$ after
re-indexing the modules $W_m'[\lambda_m']$.
\end{theorem}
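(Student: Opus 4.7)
The plan is an induction on $n+m$, built from three ingredients: a central-character comparison, an $\mathcal{O}_{\LL}$-membership argument separating ``$V$-type'' from ``$W[\lambda]$-type'' factors, and a localization-at-$\lambda$ argument isolating individual $\lambda_i$'s. As a first observation, $z_1,z_2,z_3$ act as zero on every $W_i[\lambda_i]$ by construction of the embedding $\sigma_{\lambda_i}$ and as the scalars $(\dot{z}_1^V,\dot{z}_2^V,\dot{z}_3^V)$ on $V$, hence as the same scalars on $M$; since any isomorphism $M\cong M'$ intertwines the center, $V$ and $V'$ must share the same central character.

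Next I would separate the $V$-factor from the $W[\lambda]$-factors via $\mathcal{O}_{\LL}$-membership. For a nonzero $v_i\in W_i[\lambda_i]$ with $W_i$ nontrivial, direct binomial expansion of $\sigma_{\lambda_i}(d_j)=(t+\lambda_i)^{j+1}\frac{d}{dt}$ yields $d_j\circ v_i=\lambda_i^{j+1}g(j)$, where $g(j)$ is a nonzero polynomial in $j$ taking values in the finite-dimensional space $\span\{d_{l-1}v_i:0\le l<L\}$, so $d_j\circ v_i\ne 0$ for all but finitely many $j$. Writing a general $v\in M$ as $v=\sum_\alpha v_1^\alpha\otimes\cdots\otimes v_n^\alpha\otimes v_V^\alpha$ in reduced form (with $\{v_V^\alpha\}$ linearly independent), and using the Leibniz rule together with $d_jv_V^\alpha=0$ for $j$ large (since $V\in\mathcal{O}_{\LL}$), one checks that $d_jv=0$ for all $j\ge s$ forces every nonzero $v_1^\alpha$ to satisfy $d_jv_1^\alpha=0$ for all large $j$, contradicting the previous assertion. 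By Lemma \ref{eq-condition} this gives $M\in\mathcal{O}_{\LL}\iff n=0$, hence $n=0\iff m=0$; in that case the isomorphism reduces directly to $V\cong V'$. Henceforth assume $n,m\ge 1$.

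For the main induction step I plan to single out $(W_1,\lambda_1)$ via the subalgebra $\LL[\lambda_1]$ from Proposition \ref{thm10}, which satisfies $\LL=\LL[\lambda_1]\oplus\C d_0$ and $W_1[\lambda_1]\cong\C[d_0]\otimes\Soc_{\aa}(W_1)$, with $\Soc_{\aa}(W_1)$ a distinguished $\LL[\lambda_1]$-submodule. Under $\sigma_{\lambda_1}$ the tail of $\LL[\lambda_1]$ acts locally nilpotently on $W_1[\lambda_1]$, while on each $W_j[\lambda_j]$ with $j\ne 1$ the translation $\lambda_j-\lambda_1\ne 0$ makes the same elements act as scalar-plus-nilpotent, which I detect through an eigenvalue/asymptotic invariant. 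The plan is to extract a submodule-theoretic invariant of $M$---a suitable $\LL[\lambda_1]$-submodule defined via essential $\aa$-socles (in the spirit of Lemma \ref{char-O(ww)}) and the induced-module description of Proposition \ref{thm10}---that is canonically isomorphic to $\Soc_{\aa}(W_1)\otimes W_2[\lambda_2]\otimes\cdots\otimes W_n[\lambda_n]\otimes V$. Transporting this invariant through the isomorphism then forces some $\lambda_{j}'=\lambda_1$, and Lemma \ref{char-O(ww)}(4) together with Proposition \ref{thm10}(c) yield $W_{j}'\cong W_1$; a cancellation argument in the spirit of Theorem \ref{nonweight-tensor} reduces to length $n+m-2$, and the inductive hypothesis closes the argument.

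The principal obstacle will be that the Leibniz coproduct entangles $\LL$-actions across the tensor factors, so pointwise annihilator or nilpotence conditions at $\lambda_1$ do not descend factor by factor. To overcome this I would work exclusively with submodule-defined invariants (essential socles, induced-module descriptions) rather than vectorwise conditions, and uniformly exploit the splitting $\LL=\LL[\lambda_1]\oplus\C d_0$ so that the $\C[d_0]$-factor of Proposition \ref{thm10} can be detected across the whole tensor product rather than inside a single factor.
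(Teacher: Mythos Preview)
Your strategy diverges substantially from the paper's and, as written, has a real gap at the inductive step. The paper does \emph{not} proceed by induction or by extracting a socle-type invariant; instead it uses annihilator ideals directly. For any nonzero pure tensor $X=w_1\otimes\cdots\otimes w_n\otimes v$, there is a $k_0$ with
\[
\Big(\C[t](tp_1(t))^{k_0}\tfrac{d}{dt}+\C[t](tp_1(t))^{k_0}\Big)X=0,
\qquad p_1(t)=\prod_i(t-\lambda_i),
\]
and the same for $p_2(t)=\prod_j(t-\lambda_j')$ via the isomorphism. If some $\lambda_1$ fails to divide $p_2$, combining these annihilation conditions with the local one at $\lambda_1$ (coming from $W_1[\lambda_1]$) forces $\LL\circ w_1=0$, a contradiction; hence $p_1=p_2$, giving $m=n$ and $\{\lambda_i\}=\{\lambda_j'\}$ in one stroke. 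Then, fixing $p(t)=t\prod_{i\ne 1}(t-\lambda_i)$ and $L=\C[t]p(t)^{k_1}\tfrac{d}{dt}+\C[t]p(t)^{k_1}$, the paper observes $\ann_{\LL}(w)+L=\LL$ for every $w\in W_1[\lambda_1]$, so $W_1[\lambda_1]$ and $W_1'[\lambda_1]$ are simple \emph{as $L$-modules}; the density lemma then lets one strip all other tensor factors and build an explicit $L$-isomorphism $W_1[\lambda_1]\to W_1'[\lambda_1]$, which is automatically an $\LL$-isomorphism by the same $\ann+L=\LL$ identity. The remaining factors and $V\cong V'$ are handled symmetrically. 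No cancellation lemma is needed.

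The weak point in your plan is precisely the ``cancellation'' you invoke to pass from length $n+m$ to $n+m-2$. Even if you succeed in defining an $\LL[\lambda_1]$-invariant of $M$ isomorphic to $\Soc_{\aa}(W_1)\otimes N$ with $N=W_2[\lambda_2]\otimes\cdots\otimes V$, matching it with $\Soc_{\aa}(W_1')\otimes N'$ does \emph{not} by itself give $N\cong N'$ as $\LL$-modules: you would need a tensor-cancellation argument of the type proved in Theorem~\ref{thm-10}(2), but now with $\Soc_{\aa}(W_1)$ playing the role of $H$ over the subalgebra $\LL[\lambda_1]$, and you would still have to promote an $\LL[\lambda_1]$-isomorphism of the $N$'s to an $\LL$-isomorphism. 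None of this is supplied, and it is exactly the entanglement problem you flag as the ``principal obstacle.'' The paper's annihilator/density route sidesteps all of this: rather than peeling off one factor and recursing, it isolates each factor \emph{in situ} by working over a subalgebra that kills the others. I recommend replacing your inductive scheme with that argument.
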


\begin{proof} For any $v\in V$ or $V'$, there is an $l\in\N$
such that $$\left(\C[t]t^l\frac{d}{d t}+\C[t]t^l \right)v=0.$$ Let
$p_1(t)=(t-\lambda_1)\ldots(t-\lambda_n)$ and
$p_2(t)=(t-\lambda_1')\ldots(t-\lambda_n').$ From the given
isomorphism, we know that,  for any nonzero $$ X= w_1\otimes \cdots
\otimes w_n\otimes v\in W_1[\lambda_1]\otimes
W_2[\lambda_2]\otimes\cdots\otimes W_n[\lambda_n]\otimes V,$$ there
exists $k_0\in\N$ such that \begin{equation}\left(
\C[t](p_1(t)t)^{k_0}\frac{d}{d t}+\C[t](p_1(t)t)^{k_0} \right)\circ
w_i=0,i=1,2,\ldots,n \end{equation}
\begin{equation}\left( \C[t](p_1(t)t)^{k_0}\frac{d}{d
t}+\C[t](p_1(t)t)^{k_0} \right)v=0, {\text{ and
}}\end{equation}\begin{equation}\label{p2} \left(
\C[t](p_2(t)t)^{k_0}\frac{d}{d t}+\C[t](p_2(t)t)^{k_0}
\right)X=0.\end{equation} If $p_1(t)\nmid p_2(t)$, without lose of
generality, we may assume that $(t-\lambda_1) \nmid p_2(t)$. Let
$p(t)=p_1(t)p_2(t)/(t-\lambda_1)$. Then
 $$\left(\C[t](p(t)t)^{k_0}\frac{d}{d t}+\C[t](p(t)t)^{k_0}\right)(w_2\otimes\cdots \otimes
w_n\otimes v)=0.$$ Combining with (\ref{p2}), we know that
\begin{equation}\label{annw1}\left(\C[t](p(t)t)^{k_0}\frac{d}{d
t}+\C[t](p(t)t)^{k_0}\right)\circ w_1=0.\end{equation}
 Note that there exists some $k\in \Z$ such that
\begin{equation}\label{annw2}\left(\C[t,t^{-1}](t-\lambda_1)^k\frac{d}{d
t}+\C[t,t^{-1}](t-\lambda_1)^k+\sum_{i=1}^3 \C z_i\right)\circ
w_1=0.\end{equation}
 From (\ref{annw1}) and (\ref{annw2}), we have $\LL\circ w_1=0$, which is a contradiction.
 Thus $p_1(t)|p_2(t)$, and
similarly $p_2(t)|p_1(t)$, to give $p_1(t)=p_2(t)$. So we have
$m=n$, and we may assume that $\lambda_i=\lambda_i'$ for
$i=1,2,\ldots,n$.

 Let $\psi: W_1[\lambda_1]\otimes
 \cdots\otimes W_n[\lambda_n]\otimes V\rightarrow
W_1'[\lambda_1]\otimes \cdots\otimes W_n'[\lambda_n]\otimes V'$ be a
$\LL$-module isomorphism. Fix some nonzero $$ X=w_1\otimes \ldots
w_n\otimes v\in W_1[\lambda_1]\otimes
W_2[\lambda_2]\otimes\cdots\otimes W_n[\lambda_n]\otimes V.$$ Write
$\psi(X)=\sum_{i=1}^s w'_{1,i}\otimes Y_i$ with minimal $s$, where
$w'_{1,i}\in W_1'[\lambda_1']$, $Y_i\in
W_2'[\lambda_2]\otimes\cdots\otimes W_n'[\lambda_n]\otimes V'$. We
know that $Y_i$'s are linearly independent.  Denote
$p(t)=t(t-\lambda_2)\ldots(t-\lambda_n)$, then there exists
$k_1\in\N$ such that
$$\left(\C[t]p(t)^{k_1}\frac{d}{d t}+\C[t]p(t)^{k_1}\right)
Y_i=0,\,\, \forall i=1,2,\ldots, s,$$
$$\left(\C[t]p(t)^{k_1}\frac{d}{d t}+\C[t]p(t)^{k_1} \right)(w_2\otimes\cdots\otimes
w_n\otimes v)=0.$$

Denote $L=\C[t]p(t)^{k_1}\frac{d}{d t}+\C[t]p(t)^{k_1}$. Then for
any $w\in W_1[\lambda],w'\in W_1'[\lambda]$, it is straightforward
to check that
\begin{equation}\label{annw_1}\ann_{\LL}(w)+L=\ann_{\LL}(w')+L=\LL.\end{equation}
For any $w\in W_1$, we have $W_1[\lambda_1]=U(\LL)\circ
w=U(\ann_L(w)+L)\circ w=U(L)U(\ann_L(w))\circ w=U(L)\circ w$,
that is, $W_1[\lambda_1]$ is a simple $L$-module. Similarly we have
$W_1'[\lambda_1]$ is also simple as $L$-module. Now from Lemma
\ref{density}, there exists some $u_0\in U(L)$ such that $u_0\circ
w'_{1,i}=\delta_{i,1}w'_{1,1}$. Then
$$\psi((u_0\circ w_1)\otimes (w_2\otimes\cdots w_n\otimes v))=w'_{1,1}\otimes
Y_1.$$ And

$$\psi((uu_0)\circ w_1)\otimes (w_2\otimes\cdots w_n\otimes v)))=(u\circ
w'_{1,1})\otimes Y_1, \forall u\in U(L).$$

Recall that $W_1[\lambda_1], W'_1[\lambda_1]$ are simple $\LL$
-module. Therefore we have the well-defined linear map
$\phi:W_1[\lambda_1]\rightarrow W_2[\lambda_2]$ defined by
$\phi(u\circ (u_0\circ w_1))=u\circ w'_{1,1}$ for all $u\in U(L)$,
which is obvious a $L$-module isomorphism. Now we only need to show
that it is also a $\LL$-module homomorphism. Now for any $w\in
W_1[\lambda]$, it is easy to check that $L+(\ann_{\LL}(w)\cap
\ann_{\LL}\phi(w))=\LL$. Then for any $x\in \LL$, we may write
$x=x_1+x_2$ with $x_1\in L$ and $x_2\in \ann_{\LL}(w)\cap
\ann_{\LL}\phi(w)$. Now $$\phi(x\circ w)=\phi(x_1\circ w)=x_1\circ
\phi(w)=x\circ \phi(w),$$ i.e., $\phi$ is a $\LL$-module isomorphism.

Similarly, we have $W_i[\lambda_i]\cong W_i'[\lambda_i]$ for $
i=2,3,\ldots,n$, and $V\cong V'$. Now the theorem follows from
Proposition \ref{thm10} (c).
 \end{proof}

It is easy to see that the simple modules  for $m\ge 1$ in Theorem
34 are not isomorphic to any generalized oscillator representations.

\begin{center}
\bf Acknowledgments
\end{center}

\noindent K.Z. is partially supported by  NSF of China (Grant
11271109) and NSERC. R.L. is partially suppported by NSF of China (Grant 11371134) and Jiangsu Government Scholarship for Overseas Studies (JS-2013-313).
%The authors like to express their thanks to Prof. V. Mazorchuk for
%suggestions to the old version of this paper.

\vspace{10pt}

\noindent  R.L.: Department of Mathematics, Soochow university,
Suzhou 215006, Jiangsu, P. R. China.
 Email: rencail@amss.ac.cn

\vspace{0.2cm} \noindent K.Z.: Department of Mathematics, Wilfrid
Laurier University, Waterloo, ON, Canada N2L 3C5,  and College of
Mathematics and Information Science, Hebei Normal (Teachers)
University, Shijiazhuang, Hebei, 050016 P. R. China. Email:
kzhao@wlu.ca

\end{document}